\definecolor{greenbf}{rgb}{0.2, 0.8 ,0.4}
\def\R{\mathbb{R}}
\def\H{\mathbb{H}}
\def\Z{\mathbb{Z}}
\def\mS{\mathbb{S}}
\def\T{\mathbb{T}}
\def\M{\mathcal{M}}
\def\F{\mathcal{F}}
\def\Isom{{\operatorname{Isom}}}
\def\Aff{\operatorname{Aff}}
\def\Aut{\sf{Aut}}
\def\heis{{\mathfrak{heis}} }
\def\Aut{{\sf{Aut}}}
\def\Heis{{\sf{Heis}}}
\def\O{{\operatorname{O}}}
\def\SO{{\operatorname{SO}}}
\def\Stab{{\sf{Stab_1}} }
\def\Stab{{\sf{Stab}} }
\def\Span{{\sf{span}} }
\def\Mink{{\operatorname{Mink}} }
\def\tr{{\sf{tr}}}
\def\Ker{{\sf{Ker}}}
\def\p{{\mathfrak{p}}}
\def\a{{\mathfrak{a}}}
\def\g{{\mathfrak{g}}}
\def\so{{\mathfrak{so}}}
\def\s{{\mathfrak{s}}}
\def\h{{\mathfrak{h}}}
\def\z{{\mathfrak{z}}}
\def\z{{\mathfrak{z}}}
\def\FF{{\mathfrak{F}}}
\def\H{{\mathfrak{H}}}
\def\E{{\mathfrak{E}}}
\def\la{{\langle}}
\def\ra{{\rangle}}
\def\la{{\langle}}
\def\ra{{\rangle}}
\def\X{{\mathbf{X}}}
\newcommand{\ad}{\mathrm{ad}}
\def\Heis{{\operatorname{Heis}}}
\def\Sol{{\operatorname{Sol}}}
\def\Osc{{\operatorname{Osc}}}
\def\L{{\operatorname{L}}}
\def\Span{{\sf{Span}}}
\def\G{{\widehat{G}}}
\def\M{{\widetilde{M}}}
\def\V{{\mathcal{V}}}
\def\cd{{\mathsf{cd}}}
\def\bX{\mathbf{X}}
\newtheorem{theorem}{{Theorem}}[section]
\newtheorem{pr}[theorem]{{Proposition}}
\newtheorem{isom.ext}[theorem]{{Trivial isometric extension}}
\newtheorem{defi}[theorem]{{Definition}}
\newtheorem{lemma}[theorem]{{Lemma}}
\newtheorem{sublem}[theorem]{{Sub-Lemma}}
\newtheorem{cor}[theorem]{{Corollary}}
\newtheorem{fact}[theorem]{{\sc Fact}}
\newtheorem{remark}[theorem]{{Remark}}
\newtheorem{question}[theorem]{{Question}}
\newtheorem{example}[theorem]{{Example}}
\newtheorem{conv}[theorem]{{Convention}}
\newtheorem{examples}[theorem]{{Examples}}
\newtheorem{notation}[theorem]{Notation}
\definecolor{purple}{rgb}{0.65,0.12,0.94}
\definecolor{forestgreen}{rgb}{0.4,0.64,0.13}
\newcommand{\interior}[1]{%
  {\kern0pt#1}^{\mathrm{o}}%
}
\title{\Large{\textbf{\textsc{On completeness of certain locally symmetric pseudo-Riemannian manifolds of signature $(2,2)$}}}}
\author{Malek Hanounah}
\date{}
\begin{document}

\maketitle

\begin{abstract}
We show geodesic completeness of certain compact locally symmetric pseudo-Riemannian manifolds of signature $(2,n)$. Our model  space $\bX$ is a $1$-connected, indecomposable symmetric space of signature $(2,n)$, that admits a unique (up to scale) parallel lightlike vector field. This class of spaces is the natural generalization of the class of Cahen--Wallach spaces to signature $(2,n)$. In dimension $4$ we show that $\bX$ has no proper domain $\Omega$  which is divisible by the action of a discrete group $\Gamma$ of $\Isom(\bX)$, i.e. $\Gamma$ acts properly and cocompactly on $\Omega$. Therefore, we deduce geodesic completeness in the aforementioned situation.  In arbitrary dimension we show geodesic completeness of compact locally symmetric space modeled on $\X$ under the assumption that the transition maps of $M$ are restrictions of transvections of $\bX$. Along the way, we establish a new case in the Kleinian $3$-dimensional Markus's conjecture for flat affine manifolds. Moreover, we classify geometrically Kleinian compact manifolds that are modeled  on the hyperbolic oscillator group endowed with its bi-invariant metric. Finally we discuss a natural dynamical problem motivated by the Lorentz setting (Brinkmann spacetimes).  Specifically, we show that the parallel flow on $M$ is equicontinuous in dimension $4$, even in our non-Lorentz setting.

\end{abstract}
\noindent\textbf{Keywords:} Geodesic completeness, pseudo-Riemannian symmetric spaces, Kleinian structures, compact quotients
\subsubsection*{Statements and Declarations}
\paragraph{\textbf{Competing interests:}} The authors have no competing interests to declare that are relevant to the content of this article. \\\textbf{Data availibility} Data sharing is not applicable to this article as no new data were created or analyzed in this study. 

\tableofcontents
\section{Introduction}\label{Section: Intro}
Let $M$ be a compact locally symmetric pseudo-Riemannian manifold. The question whether $M$ is geodesically complete has been studied across the literature by various authors focusing mainly on the Lorentz signature. 
The interest in  the completeness question can be justified in two big points: 
\begin{itemize}
    \item[1)] The question relates the local property on $M$ of being locally symmetric, i.e. parallel curvature tensor and  the global geometry that is the geodesic flow is complete.
    \item[2)] Completeness reduces the difficult classification problem of compact manifolds modeled on a certain symmetric space $X$ to the ``simpler'' algebraic problem of understanding discrete groups $\Gamma\subset \Isom(X)$ that act properly cocompactly and freely on $X$.
\end{itemize}

In particular the second point answers the question: which topological manifold $M$ can be endowed with a certain $(G,X)$-geometry.

\subsection{Brief history} Every compact Riemannian manifold (symmetric or not) is geodesically complete by Hopf-Rinow, therefore the completeness problem in the Riemannian situation is completely understood. However going up in signature just by one, increases the difficulty drastically. In the constant curvature case a deep theorem by Carri\`ere and Klingler \cite{carriere1989autour,klingler1996completude} shows that a compact Lorentz manifold of constant sectional curvature is geodesically complete. On the other hand, it is well known that indecomposable Lorentz symmetric spaces of dimension bigger than one are either of non-zero constant curvature or isometric to a \textit{Cahen--Wallach space} \cite{CahenWallach}. Recently, Leistner and Schliebner \cite{leistner2016completeness} showed geodesic completeness of compact manifolds locally isometric to a Cahen--Wallach spaces, establishing in particular that indecomposable compact locally symmetric Lorentz manifolds are geodesically complete. Very recently the geodesic completeness also of all  decomposable compact locally symmetric Lorentz manifolds has been confirmed \cite{allout2025completeness}.

The geodesic completeness of compact locally symmetric spaces of higher signature, namely, of signature $(2,n)$, is much less studied and much more complicated. Essentially nothing is known! The only existing result is due to  Tholozan (to the best of our knowledge) \cite{tholozan2014completeness}, who proves the completeness of compact flat Hermite-Lorentz manifolds (it is a reduced flat geometry), under the assumption that the structure is \textit{Kleinian}, i.e.  the manifold is a quotient of an open domain of the flat model by the action of a discrete subgroup of isometries that acts properly. The open domain of a Kleinian structure is also called a \textit{divisible} domain  \cite{benoist2008survey}.

In this paper the aim is to motivate the completeness problem in the non-Lorentz signature. This is done via natural first examples, which turned out to be quite subtle even in low dimensions due to lack of general techniques. More precisely, we consider compact (indecomposable) locally symmetric spaces of signature $(2,n)$ and focusing on $n=2$.

\subsection{Symmetric pp-waves of signature $(2,n)$}
Lorentz pp-waves are Lorentz manifolds that are defined by a dynamical-geometrical property, namely, they admit a global parallel lightlike vector field $V$ such that the leaves of the totally geodesic codimension one foliation tangent to $V^\perp$ are flat with respect to the induced connection. The class of Cahen--Wallach spaces is properly contained in the class of pp-waves, namely, Cahen--Wallach spaces are indecomposable symmetric pp-waves. One can extend the definition of pp-waves to higher signature in the following way.

\begin{defi}
 Let $\bX$ be a simply connected pseudo-Riemannian space of signature $(p,q)$ and $V$ be a parallel lightlike vector field on $\bX$. We say that $V$ defines a pp-wave structure on $\bX$ if the leaves of the parallel orthogonal distribution $V^\perp$ are flat with respect to the induced connection.
\end{defi}

\subsubsection{Rank $1$ pp-waves of signature $(2,n)$}
A fundamental difference between Lorentz (symmetric) pp-waves and higher signature variants lies in the fact that there can be more than one parallel lightlike vector field!  Despite the latter remark, there are symmetric pp-waves in signature $(2,n)$ which naturally admit only one (up to scale) parallel field  $V$. We want to distinguish these spaces by the following definition.  

\begin{defi}(Rank $1$ pp-waves)
    Let $\bX$ be a simply connected pp-wave of signature $(p,q)$. We say that $\bX$ is a rank $1$ pp-wave if $\bX$ admits a unique (up to scale) pp-wave structure.
\end{defi}
\begin{remark}
    Non-flat Lorentz pp-waves are rank $1$ pp-waves of signature $(1,n)$ according to the above definition. 
\end{remark}

 Symmetric pp-waves of signature $(2,2)$ are described explicitly in the recent work of Kath and Lyko \cite[Subsect. 3.3 and 3.4]{kath2024pseudo}. Their description is based on the classification result by Kath and Olbrich \cite{kath2009structure}. 
 
 It is worth noting that even in dimension $4$, there is a large class of infinitely many non-isometric symmetric spaces of signature $(2,2)$ (see Subsec. \ref{Subsec: application 4d}). In contrast to the Riemannian situation where there are only finitely many in each dimension.
\begin{remark}[Symmetric ``rank $2$'' pp-waves of signature $(2,n)$]    In signature $(2,n)$ there are concrete examples of $4$-dimensional of symmetric pp-waves that are not of rank $1$. They are indecomposable symmetric spaces that admit two linearly independent pp-waves structures, moreover, the isometry group does not preserve none of these structures, rather it preserves only the parallel lightlike plane field they span! The plane field gives rise to a totally geodesic, flat totally isotropic foliation by surfaces (see \cite[Subsect. 3.4]{kath2024pseudo}).
\end{remark}
When a manifold $M$ is locally isometric to a symmetric rank $1$ pp-wave of signature $(2,n)$, we say simply that \textit{$M$ is a locally symmetric rank $1$ pp-wave of signature $(2,n)$}. We show

\begin{theorem}\label{Intro: Theorem 4d}
    Let $M$ be a Kleinian compact locally symmetric rank $1$ pp-wave of signature $(2,2)$. Then $M$ is geodesically complete.
\end{theorem}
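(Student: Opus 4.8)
The plan is to deduce the theorem from the stronger assertion announced in the abstract, namely that the model space $\bX$ admits no \emph{proper} divisible domain; geodesic completeness of $M$ then follows formally. Indeed, as a simply connected pseudo-Riemannian symmetric space, $\bX$ is itself geodesically complete, being homogeneous under its transvection group with geodesics realized as orbits of one-parameter transvection subgroups. Unwinding the definition of a Kleinian structure, the developing map identifies the universal cover of $M$ with an open domain $\Omega \subseteq \bX$ on which the holonomy $\Gamma \subseteq \Isom(\bX)$ acts freely, properly and cocompactly, so that $M \cong \Gamma \backslash \Omega$. Since a quotient of a complete pseudo-Riemannian manifold by a free proper group of isometries is again complete, $M$ is complete as soon as $\Omega = \bX$. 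Thus it suffices to rule out $\Omega \subsetneq \bX$, and I would argue by contradiction.

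The structural engine is the rank $1$ hypothesis. Since the parallel lightlike field $V$ is unique up to scale, every isometry preserves the line $\R V$; consequently the metric dual of $V$ is a parallel, hence closed, $1$-form, which by simple connectivity of $\bX$ equals $du$ for a globally defined submersion $u \colon \bX \to \R$ whose fibers are the flat leaves of $V^\perp$, each diffeomorphic to $\R^3$ and carrying the induced flat (degenerate) connection. Isometries therefore act on the $u$-line through an affine reparametrization, giving a homomorphism $\rho \colon \Isom(\bX) \to \Aff(\R)$, and the leaves become flat affine $3$-manifolds on which $\Gamma$ acts by affine transformations. I would then study in parallel the transverse dynamics, encoded by $\rho(\Gamma) \subseteq \Aff(\R)$ acting on the interval $I = u(\Omega)$, and the leafwise dynamics, encoded by the induced affine action on the slices $\Omega \cap \{u = c\} \subseteq \R^3$. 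Cocompactness of $\Gamma \backslash \Omega$ forces $u$ to descend to a surjection onto a compact quotient of $I$ by $\rho(\Gamma)$, constraining the possibilities for $I$, while properness controls the leafwise stabilizers.

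The decisive reduction to dimension three is where the new case of the $3$-dimensional Markus conjecture enters: an appropriate leaf stabilizer in $\Gamma$ acts on a slice $\Omega \cap \{u = c\} \subseteq \R^3$ with compact quotient, producing a compact flat affine $3$-manifold. Its geodesic completeness is governed by unimodularity of the linear holonomy, and the parallel volume form carried by the pseudo-Riemannian metric on $\bX$ supplies exactly this unimodularity; completeness then forces each such slice to be all of $\R^3$. Combining this leafwise fullness with a direct analysis showing $I = \R$ yields $\Omega = \bX$, the desired contradiction.

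I expect the genuine difficulty to lie in the transverse direction. One must exclude the scenarios in which $I$ is a half-line or a bounded interval and $\rho(\Gamma)$ acts by nontrivial contractions or expansions along $u$; such affine dynamics are exactly what can, a priori, sustain a proper yet divisible domain, and reconciling them with the properness hypothesis --- and organizing the leaf stabilizers so that the Markus-type input applies uniformly across leaves --- is the crux of the argument.
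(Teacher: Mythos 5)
Your skeleton coincides with the paper's at the top level: develop $M$ onto a domain $\Omega\subseteq\bX$, exploit the flat lightlike foliation tangent to $V^\perp$ together with the transverse structure, and reduce completeness of $M$ to completeness of the $3$-dimensional leaves (this is exactly the reduction in Proposition \ref{Prop: space of F-leaves} and Corollary \ref{Cor: leaf complete iff M complete}). But there are genuine gaps, and you have located the difficulty in the wrong place. First, the transverse dynamics: since $V$ is unique up to scale, the paper shows $\varphi^*V=\pm V$ for every isometry, with $+$ on the identity component, so the $1$-form $\eta=g(V,\cdot)$ is invariant and isometries act on the leaf line by \emph{translations}, $f\circ k=f+c_k$, not by general elements of $\Aff(\R)$. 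The scenarios you call the crux --- a half-line or bounded interval $I$ with $\rho(\Gamma)$ acting by contractions or expansions --- cannot occur at all; the transverse step is the easy part of the argument (it is the short flow argument in the proof of Theorem \ref{Theorem: complete transvection}). Second, and more seriously, the step you treat as available input --- that completeness of the compact flat affine $3$-manifold slices ``is governed by unimodularity of the linear holonomy'' --- is precisely the (open) Markus conjecture. It is not a known result for the structure group $\L_u(1,1)$, whose linear part is neither unipotent nor distal and has discompacity $2$, so none of the classical completeness theorems (Fried, Goldman--Hirsch, Carri\`ere) applies. Proving this Kleinian case, Proposition \ref{Prop: Kleinian Markus} (Theorem \ref{Markus3d}), is the actual mathematical core of the paper: a case analysis on the hyperbolic part $q(\Gamma_1)$ of the holonomy, using syndetic hulls, a structure lemma for subgroups of $\Sol$, cohomological-dimension counts, and a classification of invariant open subsets of $\Mink^{1,1}$. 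Your proposal asserts the conclusion of this analysis but contains none of it.

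Third, your leafwise reduction tacitly assumes the lightlike leaves of $M$ are closed, so that a leaf stabilizer in $\Gamma$ acts cocompactly on a slice $\Omega\cap\{u=c\}$ and produces a \emph{compact} flat affine $3$-manifold. Since the foliation is defined by the closed nonsingular form $\eta_M$, the leaves are either all closed or all dense; in the dense case (translation part $p_1(\Gamma)$ dense in $\R$) there are no compact slices and your argument yields nothing. The paper needs a separate, substantial argument here: using the explicit derivations $L^{\H}_{s,t}$ and $L^{\E}_{s,t}$ it shows that dense leaves force the leaf holonomy to be unipotent, that this can only happen for the hyperbolic oscillator geometry $\X_\H$, and then concludes via the transvection-geometry completeness theorem (Theorem \ref{Theorem: complete transvection}). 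Without handling this dichotomy, the proof is incomplete even if one grants the Markus-type input.
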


In higher dimensions we show that compact models of the ``transvection geometry'' are complete, 

\begin{theorem}\label{Intro: Theorem transvections}
    Let $M$ be a compact locally symmetric rank $1$ pp-wave of signature $(2,n)$. Assume that $M$ has an atlas such that the transition maps of the charts are restrictions of transvections. Then $M$ is geodesically complete.
\end{theorem}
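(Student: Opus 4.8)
The plan is to run the developing-map formalism for $(\Isom(\bX),\bX)$-structures and exploit the special features of rank $1$ pp-waves, the whole point being that the transvection hypothesis linearizes the geodesic equation globally. Write $\Gamma=\pi_1(M)$ and let $\delta:\widetilde M\to\bX$ be a developing map with holonomy $\rho:\Gamma\to T$, where $T\subset\Isom(\bX)$ is the transvection group; the hypothesis on the transition maps is exactly the assertion that $\rho$ takes values in $T$ and that $\delta$ is $T$-equivariant. Geodesic completeness of $M$ is equivalent to $\delta$ being a covering map, hence, since $\bX$ is simply connected, to $\delta$ being a diffeomorphism, and I would organize the whole argument around establishing this.

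First I would record the structural consequences of a unique parallel lightlike direction. Transvections preserve every parallel tensor, so they fix the parallel lightlike field $V$ and the parallel $1$-form $\omega:=g(V,\cdot)$ \emph{exactly}, not merely up to scale, which is precisely where rank $1$ and the transvection hypothesis cooperate. Consequently $\omega$ descends to a globally defined, parallel, closed, nowhere-vanishing $1$-form on $M$; by compactness its class in $H^1(M;\R)$ is nonzero, since a primitive would attain an interior extremum and force $\omega$ to vanish there. Dually, the wave function $u:\bX\to\R$ with $du=\omega$ realizes $\bX$ as a $T$-equivariant fibration $u:\bX\to\R$ on which $T$ acts through a translation character $\tau:T\to\R$, so that $\bar u:=u\circ\delta:\widetilde M\to\R$ is a submersion with $\bar u(\gamma\cdot x)=\bar u(x)+\tau(\rho(\gamma))$.

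The analytic core is the geodesic equation. For any geodesic $\gamma$ the quantity $g(\dot\gamma,V)$ is constant because $V$ is parallel, so $u$ is an affine function of the affine parameter along every developed geodesic. In Brinkmann-type coordinates $(u,v,\xi)$ adapted to $V$ the leaf component satisfies $\ddot\xi=-\dot u^{\,2}A\,\xi$, where $A$ is the parallel curvature endomorphism $R(\cdot,V)V$ restricted to the flat leaf; since $M$ is symmetric the profile $A$ is constant, and since the transition maps are transvections these coordinate systems glue across charts by affine maps of exactly the form preserving this equation. Hence the leaf component of any developed geodesic solves a genuinely autonomous constant-coefficient linear ODE, whose solutions exist for all parameter values, the $v$-component being recovered by a single integration. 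This shows every maximal geodesic develops to a curve $\delta\circ\tilde\gamma$ that extends to all of $\R$ in $\bX$.

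It remains to upgrade completeness of developed geodesics to completeness of $M$, equivalently to prove $\delta$ is a covering. Here I would use $\bar u:\widetilde M\to\R$: the totally geodesic flat leaves of $V^{\perp}$, together with the transverse null geodesics that develop to the explicit complete curves above, should furnish a complete vector field on $\widetilde M$ lifting $\partial_u$, so that by an Ehresmann-type argument $\bar u$ is a locally trivial fibration over $\R$ whose fibers $\delta$ maps onto the leaves $u^{-1}(s)$; since the leaves are simply connected and complete one then concludes $\delta$ is a diffeomorphism. I expect the genuine obstacle to be precisely this last lifting step, namely guaranteeing that the parameter-complete developed geodesics remain in $\delta(\widetilde M)$ and do not ``wrap'', which amounts to controlling $\rho(\Gamma)$ through the character $\tau$ and the $T$-action on the fibers of $u$, and using compactness of $M$ to exclude a developed geodesic leaving the image in finite parameter time. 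The transvection hypothesis is exactly what makes the linearization and chart-gluing step automatic; the fibration and leaf-completeness analysis upgrading this to a covering is where the real work lies.
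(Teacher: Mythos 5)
Your transverse argument is essentially the paper's, but the proposal has a genuine gap at what is in fact the heart of the theorem: completeness of the lightlike leaves. Your ``analytic core'' --- integrating $\ddot\xi=-\dot u^{2}A\xi$ in Brinkmann coordinates --- establishes nothing that is not automatic: $\bX$ is a homogeneous symmetric space, hence geodesically complete, so every developed geodesic extends to all of $\R$ regardless of any hypothesis on $M$. The difficulty was never whether a developed geodesic extends in $\bX$, but whether the developing map restricted to each leaf $\F_\M$ of the lightlike foliation is a bijection onto the corresponding flat leaf $\F\cong\R^{n+1}$ of $\bX$. At exactly the point where this is needed you write ``since the leaves are simply connected and complete one then concludes $\delta$ is a diffeomorphism'' --- but completeness of the leaves is precisely what has to be proved. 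The leaves of $\FF_M$ are flat affine $(n+1)$-manifolds which are in general non-compact (they may be dense in $M$), and a local diffeomorphism from a simply connected manifold into $\R^{n+1}$ need be neither injective nor surjective; this is Markus-conjecture territory, and neither simple connectivity nor your character $\tau$ (which only sees the transverse direction) gives any control over it. Your closing worry about developed geodesics ``wrapping'', to be handled via $\tau$ and compactness, again concerns only the transverse direction, which is the easy half of the argument.

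What the paper does at this point is: (i) observe (Subsection \ref{Subsection: transvection invariants}) that the stabilizer of a leaf in the transvection group $\G_\bX$ acts on that leaf through the unipotent group $\L_u(n)\subset\Aff(\R^{n+1})$, so that $M$ is a compact tangentially $(\L_u(n),\R^{n+1})$-foliated manifold; and (ii) invoke the non-trivial external theorem that the leaves of such a foliation of a compact manifold are $(\L_u(n),\R^{n+1})$-complete (Fact \ref{Fact: Lu structure}, i.e.\ \cite[Theorem 1.9]{hanounah2025completeness}), a foliated analogue of unipotent-holonomy completeness results, needed precisely because the individual leaves are not compact and so cannot be handled one at a time by compact-manifold methods. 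With leaf completeness in hand, your remaining steps --- the closed $\G_\bX$-invariant one-form $\eta=g(V,\cdot)$, a vector field $W$ with $\eta(W)=1$ whose lift has complete flow by compactness of $M$, and the induced free and transitive action on the space of $\FF_\M$-leaves forcing the developing map to be bijective --- do coincide with the paper's proof of Theorem \ref{Theorem: complete transvection}. So the proposal is repairable, but only by supplying the unipotent reduction and the foliated completeness theorem, which is where the real content of the proof lies.
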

As an immediate corollary we get,
\begin{cor}
     Let $M$ be a compact locally symmetric rank $1$ pp-wave of signature $(2,n)$ modeled on $\X$. Assume that the isometry group of the $\X$ is virtually equal to the transvection group. Then $M$ is geodesically complete.
\end{cor}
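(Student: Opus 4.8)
The plan is to deduce the statement from Theorem~\ref{Intro: Theorem transvections} by passing to a suitable finite cover on which the holonomy is forced into the transvection group. I would first record the two structural facts that make the reduction work. On the one hand, the transvection group $\operatorname{Transv}(\bX)$ — the subgroup of $\Isom(\bX)$ generated by the products $s_p s_q$ of geodesic symmetries, equivalently by the parallel transports along geodesics — is a \emph{normal} subgroup of $\Isom(\bX)$, since conjugation by $g\in\Isom(\bX)$ carries the transvection along a geodesic $\gamma$ to the transvection along $g\gamma$. On the other hand, the hypothesis that $\Isom(\bX)$ is virtually equal to $\operatorname{Transv}(\bX)$, together with the inclusion $\operatorname{Transv}(\bX)\subseteq\Isom(\bX)$, amounts to saying that $\operatorname{Transv}(\bX)$ has finite index in $\Isom(\bX)$; hence the quotient $Q:=\Isom(\bX)/\operatorname{Transv}(\bX)$ is a finite group.

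Next I would exploit the $(\Isom(\bX),\bX)$-structure carried by $M$. Fix a development--holonomy pair $\operatorname{dev}\colon\widetilde M\to\bX$, $\rho\colon\pi_1(M)\to\Isom(\bX)$. Composing $\rho$ with the projection $\Isom(\bX)\to Q$ gives a homomorphism from $\pi_1(M)$ to the finite group $Q$; let $\Gamma_0$ be its kernel, a finite-index subgroup of $\pi_1(M)$, and let $\widehat M\to M$ be the associated finite cover. Being a finite cover of a compact manifold, $\widehat M$ is compact, and since the covering map is a local isometry, $\widehat M$ is again a compact locally symmetric rank $1$ pp-wave of signature $(2,n)$ modeled on $\bX$. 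By construction $\rho(\Gamma_0)\subseteq\operatorname{Transv}(\bX)$, so the holonomy of $\widehat M$ takes values in the transvection group. Using the standard fact that a $(G,X)$-manifold whose holonomy lies in a subgroup $H\le G$ admits a compatible $(H,X)$-atlas (charts given by restrictions of the developing map, with coordinate changes germs of holonomy elements), I would refine the atlas of $\widehat M$ to one whose transition maps are restrictions of transvections. Thus $\widehat M$ meets the hypotheses of Theorem~\ref{Intro: Theorem transvections}, and we conclude that $\widehat M$ is geodesically complete.

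Finally I would push completeness back down to $M$. The covering $\widehat M\to M$ is a finite pseudo-Riemannian covering, hence a surjective local isometry; every geodesic of $M$ is the projection of a geodesic of $\widehat M$, so the completeness of the latter (all geodesics extend to all of $\mathbb{R}$) forces completeness of the former. Therefore $M$ is geodesically complete.

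I do not expect a genuine obstacle here — this is why the statement is phrased as an immediate corollary. The only points that require care are the normality of $\operatorname{Transv}(\bX)$ in $\Isom(\bX)$, which is what allows the finite quotient $Q$ to be formed and the kernel cover to be defined, and the descent of geodesic completeness through a finite pseudo-Riemannian covering; both are routine once the finite-cover construction is set up correctly.
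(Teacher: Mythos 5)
Your proof is correct and follows essentially the same route as the paper: the paper handles this reduction via its Remark on finite-index normal subgroups (pass to the finite cover corresponding to $\rho^{-1}(\G_\bX)$, note it is naturally a $(\G_\bX,\bX)$-manifold, apply Theorem \ref{Intro: Theorem transvections}, and descend completeness through the finite covering). Your additional care about normality of the transvection group and the descent of geodesic completeness matches what the paper leaves implicit.
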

\subsection{Applications} We list some important results and consequences that arise from the main theorem.
\subsubsection{On the $3$-dimensional Markus's conjecture}
Let $M$ be a compact locally symmetric rank $1$ pp-wave $\bX$ of signature $(2,n)$.  Since $V$ is unique (up to scale), the manifold $M$ admits a well defined codimension one lightlike flat foliation, so, the leaves of the foliation are natural flat affine manifolds, i.e. endowed with an $(\Aff(\R^k),\R^k)$-structure. Restricting to when $M$ is $4$-dimensional and assuming that the flat lightlike leaves are closed, we get that the leaves are, in fact, endowed with a (sub)-affine geometry with a non-unipotent structural group \begin{equation}\label{eq: unimodular group}
    \L_u(1,1):=\left\{ \begin{pmatrix}
    1 & \star & \star \\
    0 & e^t &0 \\
    0 & 0& e^{-t}
\end{pmatrix} | \ t\in \R \right\}\ltimes \R^3.
\end{equation}
We show, 
\begin{theorem}\label{Markus3d}
    Let $N$ be a  compact Kleinian flat affine $3$-manifold. Assume that the linear holonomy of $N$ is (up to conjugacy) a subgroup of   \begin{equation*}
        \left\{ \begin{pmatrix}
    1 & \star & \star \\
    0 & e^t &0 \\
    0 & 0& e^{-t}
\end{pmatrix} | \ t\in \R \right\}.
    \end{equation*} Then $N$ is complete.
\end{theorem}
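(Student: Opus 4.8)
The plan is to reduce geodesic completeness to a two‑dimensional statement about a transverse $\SOL$‑geometry and to finish with a unimodularity obstruction. Since $N$ is Kleinian we may write $N=\Gamma\backslash\Omega$ with $\Omega\subset\R^3$ open and connected, $\Gamma\subset\Aff(\R^3)$ acting properly discontinuously, freely and cocompactly, and the developing map $\mathrm{dev}\colon\widetilde N\to\R^3$ a diffeomorphism onto $\Omega$. As completeness of a flat affine manifold is equivalent to $\mathrm{dev}$ being surjective, the theorem amounts to proving $\Omega=\R^3$. Writing elements of $\Gamma$ as pairs $(L,v)$ with $L$ in the prescribed linear group and $v\in\R^3$, two structural features drive everything: first, $Le_1=e_1$ for all such $L$, so $e_1$ is a common fixed direction and the translations $Z=\R e_1$ are central in $G:=H\ltimes\R^3$; second, the induced action on $\R^3/\langle e_1\rangle\cong\R^2$ has linear part $\mathrm{diag}(e^t,e^{-t})$, so the quotient map $\pi\colon\R^3\to\R^2$ is $\Gamma$‑equivariant for a homomorphism $\Gamma\to\SOL$, where $\SOL=\{\mathrm{diag}(e^t,e^{-t})\}\ltimes\R^2$.

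First I would show that $\Omega$ is saturated by complete lines in the $e_1$‑direction. The field $e_1$ descends to a nowhere‑vanishing parallel vector field on the compact manifold $N$, so its flow is complete; lifting to $\widetilde N$ and developing, this flow is exactly the $Z$‑translation flow on $\Omega$. Completeness of the lifted flow forces $p+se_1\in\Omega$ for all $p\in\Omega$ and $s\in\R$, i.e. $\Omega=\pi^{-1}(\Omega_2)$ with $\Omega_2:=\pi(\Omega)\subset\R^2$ open and connected. Next I would establish that the parallel leaves are closed, using compactness of $N$ together with the equicontinuity of the parallel flow: this makes $\pi$ descend to a Seifert fibration of $N$ over a closed affine $2$‑orbifold $\overline N$, on which $\overline\Gamma:=\Gamma/(\Gamma\cap Z)\subset\SOL$ acts properly discontinuously and cocompactly on $\Omega_2$. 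Completeness of $N$ is thereby reduced to proving $\Omega_2=\R^2$, i.e. completeness of the transverse $\SOL$‑geometry.

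For the transverse problem I would pass to a syndetic hull. Being a discrete subgroup of the solvable group $\SOL$ acting cocompactly on the surface $\Omega_2$, $\overline\Gamma$ is virtually polycyclic of rank $2$, so it has a connected solvable syndetic hull $\overline S\subset\SOL$ of dimension $2$ containing a finite‑index $\Z^2$ cocompactly; properness of the $\overline\Gamma$‑action transfers to $\overline S$, and a connected solvable group acting properly has no nontrivial compact stabilizers, so $\overline S$ acts freely, properly and cocompactly, hence simply transitively on its open orbit $\Omega_2=\overline S\cdot p_0$. Because $\overline S$ carries a lattice it must be unimodular. Now up to conjugacy the only connected $2$‑dimensional subgroups of $\SOL$ are the nilradical $\R^2$ of translations and the two copies of $\Aff(\R)$ generated by $\mathrm{diag}(e^t,e^{-t})$ together with one translation direction; the latter are non‑unimodular, so they admit no lattice, and although each has an open orbit it is only a half‑plane. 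Hence $\overline S$ is the translation group $\R^2$, whose orbit is all of $\R^2$. Therefore $\Omega_2=\R^2$, and pulling back through $\pi$ yields $\Omega=\R^3$, so $N$ is complete.

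The main obstacle is the control of the central, parallel direction in the second paragraph: one must guarantee that collapsing the $e_1$‑fibres yields a genuinely proper, cocompact transverse action, equivalently that $\Gamma\cap Z$ is a cocompact lattice in $Z$ so that the parallel leaves are closed circles. This is exactly where compactness of $N$ and the equicontinuity of the parallel flow are indispensable, since equicontinuity prevents the central translations from being absorbed along the fibres, which would otherwise break properness of the induced action on $\Omega_2$ (and allow the reduction to degenerate). Once the closed‑leaf property is secured, the remaining steps—saturation, the passage to $\SOL$, the syndetic hull, and the subgroup classification—are routine, the decisive point being that the non‑unimodular $\Aff(\R)$ subgroups of $\SOL$, which alone could produce a proper divisible (half‑plane) domain, carry no lattice.
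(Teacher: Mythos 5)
Your reduction breaks at its second step, and it breaks at exactly the point that carries the real content of the theorem. The assertion that compactness of $N$ together with ``equicontinuity of the parallel flow'' forces $\Gamma\cap Z$ to be a cocompact lattice in $Z=\R e_1$, so that the parallel leaves are closed circles and $\pi$ descends to a Seifert fibration with $\overline{\Gamma}$ acting properly discontinuously and cocompactly on $\Omega_2$, is false. Take $\T^3=\R^3/\Lambda$ with $\Lambda$ a generic lattice of pure translations: this is a compact Kleinian manifold whose linear holonomy is trivial (hence contained in the prescribed group), yet $\Gamma\cap Z=\{1\}$ and every parallel leaf is dense; the image of $\Gamma$ in $\Sol\cong\SO^{\circ}(1,1)\ltimes\R^2$ is a non-discrete group of translations, so the transverse action on $\Omega_2$ is not properly discontinuous and no orbifold quotient exists. (That example happens to be complete, but it shows that your dichotomy resolves the wrong way: the dense-leaf case cannot be excluded, and your argument never engages it.) Moreover, equicontinuity of the parallel flow is invoked but nowhere proved; in this paper equicontinuity is a separate theorem established only \emph{after} completeness and the geometric classification, and in any case it does not imply closedness of the leaves, as the same torus example shows. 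So the proof is incomplete precisely in the cases where the image of the holonomy in the transverse group is non-discrete.

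For comparison, the paper's proof (Proposition \ref{Prop: Kleinian Markus}) is organized around exactly those cases. It splits according to the projection $q(\Gamma_1)\subset\SO^{\circ}(1,1)$: if trivial, the structure reduces to an $(\L_u(2),\R^3)$-structure, complete by Fact \ref{Fact: Lu structure} (no closed-leaf hypothesis needed); if dense, one gets a nilpotent syndetic hull of $\Gamma_1$ via Fact \ref{Fact: strong Zassenhaus}, forcing $\Gamma_0\subset Z$, which contradicts Lemma \ref{Lemma: center transversal} --- a step that uses the Kleinian hypothesis through the $\Gamma$-invariant function $x^2-y^2$; if infinite cyclic, the delicate situation is abelian $\Gamma_0$ meeting $Z$ trivially (i.e.\ non-closed $\V$-leaves), which the paper settles by Goldman--Hirsch parallel-volume completeness in rank $2$, and in rank $3$ by Lemma \ref{Lemma: Sol subgroups}, the classification of invariant domains in $\Mink^{1,1}$ (Lemma \ref{Obser: aff invariant subsets}), contractibility of the image $U$, and a cohomological-dimension contradiction. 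Your transverse argument (syndetic hull in $\Sol$, unimodularity excluding the $\Aff(\R)$-copies) is sound and would be a clean substitute for the paper's appeal to completeness of compact flat Lorentz surfaces in the closed-leaf, non-abelian case; but it applies only after one knows $\overline{\Gamma}$ is discrete in $\Sol$ and acts properly on $\Omega_2$, which is precisely what must be proved or circumvented.
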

The above statement cannot be directly deduced from known literature, since the discompacity of the linear holonomy is \(2\), Carri\`ere's theorem \cite{carriere1989autour} does not apply. Also, neither \cite{fried1981affine} nor \cite{fried1986distality} can be applied, since the holonomy is neither unipotent nor \textit{distal} (for definition \cite[Section 1]{fried1986distality}). It is also worth noting that closed affine manifolds with such holonomy exist (with non-trivial hyperbolic part) \cite[Theorem 4.1]{fried1981affine}.

\subsubsection{Hyperbolic oscillator geometry}
In dimension $4$, there are exactly two Lie groups that admit a non-flat bi-invariant pseudo-Riemannian metric. They are the oscillator group and the hyperbolic oscillator group. The oscillator group has a bi-invariant Lorentz metric (which is indecomposable), therefore it is a Cahen--Wallach space. Compact manifolds locally modeled on the oscillator geometry are well understood. On the other hand, the hyperbolic oscillator group $\Osc_s$ has a bi-invariant metric of signature $(2,2)$ and it is globally isometric to a symmetric rank $1$ pp-wave of signature $(2,2)$. As a corollary of Theorem \ref{Intro: Theorem 4d} we classify compact Kleinian manifolds modeled on the hyperbolic oscillator geometry, i.e. compact Kleinian $(\Isom(\Osc_s), \Osc_s)$-manifolds, as follows:

\begin{theorem}\label{Intro: cor hyperbolic oscillator}
    Let $M$ be a compact Kleinian $(\Isom(\Osc_s), \Osc_s)$-manifold.  Then $M$ is isometric to a quotient of the hyperbolic oscillator group (endowed with its bi-invariant metric) by a cocompact lattice or $M$ is isometric to a quotient $\Gamma \backslash (\R\times\Heis_3)$, where $\Gamma$ is a cocompact lattice of $\R\times\Heis_3$ (with a suitable left-invariant metric). 
\end{theorem}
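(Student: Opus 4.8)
The plan is to feed Theorem~\ref{Intro: Theorem 4d} into a structural analysis of the discrete subgroups of $\Isom(\Osc_s)$. Since $\Osc_s$ with its bi-invariant metric is globally isometric to a symmetric rank~$1$ pp-wave of signature $(2,2)$, Theorem~\ref{Intro: Theorem 4d} applies verbatim and $M$ is geodesically complete. The Kleinian hypothesis writes $M=\Gamma\backslash\Omega$ with $\Omega\subseteq\Osc_s$ open and $\Gamma$ acting properly; but $\Osc_s$ is simply connected and geodesically complete, so the developing map of a complete $(\Isom(\Osc_s),\Osc_s)$-structure is a covering onto $\Osc_s$, hence a diffeomorphism, forcing $\Omega=\Osc_s$. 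Therefore $M\cong\Gamma\backslash\Osc_s$ for a torsion-free discrete $\Gamma\subset\Isom(\Osc_s)$ acting properly discontinuously, freely and cocompactly, and the problem reduces to classifying such $\Gamma$ up to conjugacy.

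I would then record the shape of $G:=\Isom(\Osc_s)$. Because the metric is bi-invariant, $G$ contains both left and right translations, which commute with one another; and because $\Osc_s$ is a \emph{solvable} symmetric space, $G^0$ is solvable-by-compact, with solvable transvection part and compact isotropy. Cocompactness of $\Gamma$ in $G^0$ then forces $\Gamma$ to be virtually polycyclic and torsion-free, so after passing to a finite-index subgroup we may assume $\Gamma$ is polycyclic.

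The central step is the construction of a \emph{syndetic hull}. Using the structure theory of polycyclic groups acting properly on solvable homogeneous spaces (in the spirit of Fried--Goldman and Raghunathan), a finite-index subgroup of $\Gamma$ sits cocompactly inside a connected solvable Lie subgroup $L\subset G$, and properness forces $L$ to meet the isotropy trivially, so that $L$ acts simply transitively on $\Osc_s$. Thus $\Osc_s$ is identified, as a homogeneous space, with $L$ itself, and the bi-invariant metric is transported to a left-invariant metric on $L$.

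It remains to classify, up to conjugacy in $G$, the connected subgroups $L$ acting simply transitively on $\Osc_s$; this is a computation inside $\Lie(G)$, where one seeks $4$-dimensional subalgebras transverse to the isotropy and closed under the bracket. I expect exactly two conjugacy classes. The first is the left-translation copy of $\Osc_s$, on which the transported metric is the original bi-invariant one. The second is spanned by the left Heisenberg translations together with the \emph{right} translation in the lightlike central direction; since left and right translations commute, this subalgebra is isomorphic to $\R\times\Heis_3$ and again acts simply transitively, the transported metric now being a suitable left-invariant metric of signature $(2,2)$. In either case $M\cong\Lambda\backslash L$ for a cocompact lattice $\Lambda\le L$, which is precisely the claimed dichotomy. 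The main obstacle is twofold: proving that this list of simply transitive subalgebras is \emph{exhaustive}, and descending from the finite-index statement back to $\Gamma$ itself, i.e. checking that the finite extension and the normalizer of $L$ in $G$ do not produce genuinely new quotients beyond the two listed.
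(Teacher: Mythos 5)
Your skeleton (completeness from Theorem \ref{Intro: Theorem 4d}, then a syndetic hull for the holonomy group, then identification of the hull) is indeed the paper's strategy, but your proposal stops exactly where the real work starts, and the one concrete computation you offer is wrong. The step you defer as an ``obstacle'' --- exhaustiveness of the list of possible hulls --- is the entire content of Proposition \ref{Prop: Standrad}. The paper never classifies simply transitive subgroups up to conjugacy; it analyzes the hull $S\supseteq\Gamma$ directly: $S$ is contractible, so $\dim S=\cd(\Gamma)=4$ by \cite[VIII (8.1)]{brown2012cohomology}; the projection of $S$ to $\R\times\SO^{\circ}(1,1)$ cannot be $2$-dimensional, for then $S_0=S\cap\Heis_5$ would be an abelian plane whose image in $\heis_5/\z$ is invariant and unimodular, contradicting Lemma \ref{Lemma: invariant unimodular plane} (an explicit eigenvector computation with the derivation $L^{\H}_{s,t}$ showing that such a plane must generate a copy of $\heis_3$); hence $S\cong\R\ltimes S_0$ with $S_0\cong\Heis_3$ by the same lemma, and the induced $\R$-action on $\widehat{S}_0$ is hyperbolic or trivial, i.e.\ $S\cong\Osc_s$ or $S\cong\R\times\Heis_3$. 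Nothing in your outline substitutes for this computation. Note also that the existence of the hull is not a generic Fried--Goldman fact: the paper invokes Saito \cite{saitoII}, which requires $G_\H$ to be completely solvable; for the companion elliptic geometry (Proposition \ref{Prop: X_{E}}) hulls can fail to exist and a separate case-by-case analysis is needed, so this hypothesis must be checked, not waved at.

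Second, your description of the second simply transitive subgroup fails: the lightlike direction is the \emph{center} of $\Osc_s$, so right translation along it coincides with left translation along it, and ``left Heisenberg translations together with the right translation in the lightlike central direction'' spans only the $3$-dimensional group of left $\Heis_3$-translations, which cannot act transitively on the $4$-dimensional $\Osc_s$. The correct second hull is (up to left/right symmetry) left $\Heis_3$ together with \emph{right} translations by the hyperbolic factor $\R$ of $\Osc_s=\R\ltimes\Heis_3$; these commute with the left $\Heis_3$-translations and meet them trivially, giving $\R\times\Heis_3$ acting simply transitively. Two further slips: the isotropy of $\Isom(\Osc_s)$ at a point is not compact (up to finite index it is $\Osc_s/Z$ acting by conjugation), so your justification for ``$L$ meets the isotropy trivially'' should instead be that proper actions have compact stabilizers while a $1$-connected solvable Lie group has no nontrivial compact subgroups; and trivial intersection with the isotropy only gives \emph{freeness} of the $L$-action, not transitivity --- transitivity requires $\dim L=4$, which is precisely the cohomological-dimension argument you skipped.
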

The above theorem shows, in particular, that all compact Kleinian manifolds locally modeled on the hyperbolic oscillator are \textit{standard}, i.e. the group $\Gamma$ is a cocompact lattice in a connected Lie subgroup $S$ of the isometry group that acts properly in the model space.
\begin{remark}(Lattices of the hyperbolic oscillator)
    As it appears in the above theorem, one part of the dichotomy is that the manifold $M$ is isometric to a quotient of the hyperbolic oscillator group by a (cocompact) lattice. A complete classification of lattices of the hyperbolic oscillator group is given in \cite{kathsplitosci}, so a complete isometric classification of compact Kleinian manifolds modeled on the hyperbolic oscillator geometry seems ``achievable''.
\end{remark}

The last part discusses some dynamics related to the parallel flow on the compact locally symmetric rank $1$ pp-waves of signature $(2,2)$.
\subsubsection{Dynamics of the parallel flow}
Let $(M,g)$ be a compact pseudo-Riemannian manifold, and let $W$ be a Killing field of $M$, i.e. the flow of $W$ is isometric. One question we could ask is whether the action $W$ is equicontinuous. Equicontinuity is equivalent to the flow of $W$ preserving a Riemannian metric, so in a sense it is dynamically trivial. For Lorentz locally symmetric pp-waves equicontinuity of the parallel lightlike field is conjectured to be true. In the indecomposable case, i.e. $M$ is a Cahen--Wallach space, the lightlike vector field $W$ is shown  \cite[Proposition 8.2]{kath-CW} to be periodic. In the higher signature case, equicontinuity of the parallel flow is not expected to hold in every dimension. However, surprisingly, we show that it does hold in dimension 4, exhibiting similar behavior to that in the Lorentz setting.

\begin{theorem}\label{Intro: Theorem Equicontnuity}
    Let $M$ be a $4$-dimensional compact Kleinian locally symmetric rank $1$ pp-wave. Then the parallel flow of $M$ is equicontinuous.
\end{theorem}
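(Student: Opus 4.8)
The plan is to reduce the statement to a standard homogeneous picture and then exhibit an explicit invariant Riemannian metric. First I would invoke completeness: since $M$ is a $4$-dimensional compact Kleinian locally symmetric rank $1$ pp-wave, Theorem \ref{Intro: Theorem 4d} shows that $M$ is geodesically complete, so the developing map is a diffeomorphism and $M=\Gamma\backslash\bX$ for a discrete $\Gamma\subset\Isom(\bX)$ acting properly, freely and cocompactly. Next I would identify the parallel flow intrinsically. Writing $V$ for the (unique up to scale) parallel lightlike field, its integral curves are geodesics ($\nabla_VV=0$), and since $V$ is parallel it is in particular Killing; comparing $1$-jets along an orbit then shows that the time-$t$ flow $\phi_t$ agrees with the transvection $\tau_t$ along the corresponding geodesic, so the lifted parallel flow on $\bX$ is exactly the one-parameter group of transvections determined by $V$.

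The structural heart is that this one-parameter group is central. Writing $\g=\h\oplus\p$ for the symmetric decomposition of the transvection algebra and $\xi\in\p$ for the vector corresponding to $V$, the identity $R(\,\cdot\,,V)=0$ — which holds for any parallel field by the pair symmetry of the curvature, since $R(\,\cdot\,,\,\cdot\,)V=0$ — translates into $[[X,\xi],Y]=0$ for all $X,Y\in\p$. As $\h=[\p,\p]$ acts faithfully on $\p$ this forces $[\p,\xi]=0$, while parallelism of $V$ gives $[\h,\xi]=0$; thus $\xi$ lies in the center of $\g$ and $\Ad(\exp t\xi)=\mathrm{Id}$. Now I would feed in the dimension-$4$ classification: by Theorem \ref{Intro: cor hyperbolic oscillator} the manifold $M$ is standard, i.e. $M=\Gamma\backslash S$ with $S$ a connected Lie group ($S=\Osc_s$ or $S=\R\times\Heis_3$), $\Gamma\subset S$ a cocompact lattice acting by left translations, and the parallel flow realized as right translation by the central subgroup $\exp(t\xi)\subset S$. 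Choosing any left-invariant Riemannian metric $h$ on $S$, it descends to $M$ because $\Gamma$ acts by left translations, and it is invariant under the flow because the flow is right translation by a central element, whose adjoint action is trivial, hence preserving every left-invariant metric. Since $\phi_t$ preserves a Riemannian metric it is equicontinuous, as recalled above.

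The main obstacle is the step establishing the standard form — the reduction $M=\Gamma\backslash S$ with $\Gamma$ acting by translations — which is exactly where dimension $4$ is essential and which rests on completeness (Theorem \ref{Intro: Theorem 4d}) together with the classification (Theorem \ref{Intro: cor hyperbolic oscillator}). Conceptually, centrality of $\xi$ shows that the linearized parallel flow is trivial in the homogeneous (left-invariant) framing in every dimension; what can fail in higher signature is that this framing need not descend to the compact quotient, so that the parallel-transport cocycle along recurrent orbits — controlled by the screen bundle $V^\perp/\R V$ of signature $(1,1)$ and its $\SO(1,1)$-holonomy — may be unbounded. In dimension $4$ the standardness makes $\Gamma$ preserve the framing, trivializing this cocycle and yielding boundedness. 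I would therefore expect the only delicate point beyond citing the two theorems to be checking that the flow direction, and not merely the line field $\R V$, descends to $M$, i.e. that $\Gamma$ fixes $V$ up to positive scale; this is a minor technical matter, resolvable after passing to a finite cover if necessary, and it does not affect equicontinuity since the latter is tested over all $t\in\R$.
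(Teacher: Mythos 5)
Your reduction has a genuine gap: it covers only one of the two geometries that actually carry compact Kleinian structures in dimension $4$. Theorem \ref{Intro: cor hyperbolic oscillator}, which you invoke to get the standard form $M=\Gamma\backslash S$ with $S=\Osc_s$ or $\R\times\Heis_3$, is a statement about $(\Isom(\Osc_s),\Osc_s)$-manifolds only, i.e.\ about the hyperbolic oscillator geometry $\bX_\H$. But a $4$-dimensional compact Kleinian locally symmetric rank $1$ pp-wave may equally well be modeled on the ``elliptic'' space $\bX_\E$, and such quotients exist: the paper constructs them explicitly (the examples following Proposition \ref{Prop: X_{E}}, with lightlike leaves covered by $\R^3$ or $\Heis_3$). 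For those manifolds your argument breaks down at its ``structural heart'': standardness genuinely fails for $\bX_\E$. The group $G_\E$ is not completely solvable (Remark \ref{Remark: EV of L_{s,t}}), Saito's syndetic hull theorem does not apply, and in the paper's torus-like example $\Gamma=\langle\hat\gamma\rangle\times\Gamma_0$ one can check directly that no connected subgroup $S\subset G_\E$ of dimension $4$ contains $\Gamma$ cocompactly: any such $S$ would have to contain a $3$-dimensional subgroup $S\cap\Heis_5$ whose projection modulo the center is invariant under the elliptic rotations $\exp(sL^\E_{1,0})$, and the plane $\widehat{S}_0=\Span\langle \widehat{a_1+a_4},\widehat{a_2+a_4}\rangle$ is not rotation-invariant. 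So there is no left-invariant framing on $M$ for your metric argument to use.

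The paper's own proof therefore splits into two cases. In the hyperbolic case it uses Proposition \ref{Prop: Standrad} exactly as you do (and gets the stronger conclusion that the flow is periodic, by showing $[\Gamma,\Gamma]\cap Z$ is a nontrivial discrete subgroup of $Z\cong\R$). In the elliptic case it argues differently: the leaf holonomy is unipotent, so $\Gamma_0=\Gamma\cap\Heis_5$ has a syndetic hull $S_0\subset\Heis_5$ which, for dimension reasons ($\cd(\Gamma_0)=3=\dim S_0$), contains the center $Z$; then $\Gamma Z\subset\Gamma S_0$, and since $\Gamma\backslash\Gamma S_0\cong\Gamma_0\backslash S_0$ is compact, the closure of the flow in the isometry group of $M$ is compact --- equicontinuity without any standard form. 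Your centrality observation for $\xi$ (which matches the paper's Appendix, where $V$ is shown to be central in the transvection algebra, and Section \ref{Section: geo of models}, where $V$ is shown to be $G_\bX$-invariant) is correct and needed in both approaches, but to complete your proof you must either supply the paper's compactness argument for $\bX_\E$ or find a substitute for standardness there; as it stands, the elliptic case is simply absent.
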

\subsection{Open questions}
In the light of Theorem \ref{Intro: Theorem 4d} one can expect that a similar theorem also holds in higher dimensions. For that it seems one has to a show a general version of Theorem \ref{Markus3d}, namely, compact Kleinian $(\L_u(1,n-1),\R^{n+1})$-manifolds are complete, for the definition of the group $\L_u(1,n-1)$ see (\ref{Eq: L_u(1,n-1)}).

Another direction could be to consider compact complete $(\L_u(1,n-1),\R^{n+1})$-manifolds and classify their fundamental group. More precisely,

\begin{question}
    Is the fundamental group of a compact complete $(\L_u(1,n-1),\R^{n+1})$-manifold virtually solvable?
\end{question}

The answer of the above question implies that compact (Kleinian) locally symmetric rank $1$ pp-waves $M$ of signature $(2,n)$, with closed lightlike leaves have virtually solvable fundamental group. 
\subsection{Organization of the article} In Section \ref{Section: Prelim} we present the basic material that we use throughout the paper. Section \ref{Section: geo of models} is devoted to exploiting the isometric invariant objects of  symmetric rank $1$ pp-waves of signature $(2,n)$. We use the latter geometries in Section \ref{Section: completeness transvections} to show Theorem \ref{Intro: Theorem transvections}.  Then in Section \ref{Section: completeness 4d} we peruse the completeness question of compact manifolds modeled on $4$-dimensional symmetric rank $1$ pp-waves with essentially larger isometry group than the transvections, namely the models are the hyperbolic oscillator group with it bi-invariant metric and the ``elliptic'' geometry, however, under the assumption that closed model is a Kleinian, we show Theorem \ref{Intro: Theorem 4d} and Theorem \ref{Markus3d}. Section \ref{Section: topology} is devoted to the geometric classification namely we show Theorem \ref{Intro: cor hyperbolic oscillator}. The last Section is a comment on the dynamics of the parallel vector field for $4$-dimensional compact locally symmetric rank $1$ pp-waves. It contains the proof of Theorem \ref{Intro: Theorem Equicontnuity}.
\subsection*{Acknowledgment}
This work would not be possible without the help of my advisor Ines Kath. 
I am also thankful for her continuous support and encouragement.

\section{Geometric structures and symmetric spaces}\label{Section: Prelim}

We briefly recall the notion of geometric structures on manifolds.

\subsection{Geometric structures}\label{subsection: GX}

Let $G$ be a Lie group and $X$ a space on which $G$ acts smoothly transitively, then one has $X=G/H$, where $H$ is a closed subgroup of $G$. A $(G,X)$-structure on a manifold $M$ is an $X$-valued atlas on $M$ where the chart transitions are restrictions of elements of $G$. A manifold with a $(G, X)$-structure is called a $(G, X)$-manifold. Having a $(G,X)$-manifold $M$ gives rise to a developing pair $(D, \rho)$. Namely, $D$ is a local diffeomorphism from the universal cover $\M$ into \textit{the model space} $X$, while $\rho$ is a morphism from the $\pi_1(M)$ into the \textit{structural group} $G$. These maps satisfy the following equivariance formula 
 \begin{equation}
     D(\gamma x) =\rho(\gamma)D(x),
 \end{equation} for any $x\in \widetilde{M}$ and $\gamma \in \pi_1(M)$.
The map $D$ is called a \textit{developing map} and $\rho$ is a \textit{holonomy representation}. 

More on the theory of $(G,X)$-structures can be found in  \cite{goldman2022geometric} and  \cite{thurston2022geometry}.

\subsubsection{G-invariant objects}\label{G invariant} We recall a simple yet important principle for $(G,X)$-structures. Having a $G$-invariant object, e.g. a tensor, connection, etc., on the the model $X$ gives rise to a well defined corresponding object on any $(G,X)$-manifold $M$ ``\textit{a pullbacked object}''. The process is done via pullbacking the object locally using local charts, this gives rise to a well defined object due to $G$-invariance.
 
\begin{defi}[Completeness in the sense of $(G, X)$-structures]
    Let $M$ be a $(G, X)$-manifold, we say that $M$ is $(G,X)$-complete if the developing map $D: \widetilde{M} \to X$ is a covering map.
\end{defi}
The next fact justifies the above definition, when the model space has a $G$-invariant connection.
\begin{fact}[Proposition 3.7 \cite{thurston2022geometry}]\label{Fact: complete=complete}
    Let $M$ be $(G,X)$-manifold. Assume that $X$ is complete with respect to a $G$-invariant connection. Then $M$ is geodesically complete (with respect to the ``pullback'' connection) if and only if $M$ is $(G,X)$-complete.
\end{fact}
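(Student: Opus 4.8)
The plan is to reduce the statement to a classical fact about affine manifolds: a local affine diffeomorphism whose source is geodesically complete is a covering map. First I would fix the pullback connection $\nabla$ on $M$, which by the $G$-invariance principle of Section~\ref{G invariant} is well defined and turns every chart into an affine map to $(X,\nabla^X)$, where $\nabla^X$ denotes the invariant connection. Consequently the developing map $D\colon \widetilde{M}\to X$ is a \emph{local affine diffeomorphism}, i.e.\ a local diffeomorphism intertwining the lifted connection on $\widetilde{M}$ with $\nabla^X$; in particular it carries geodesics to geodesics and satisfies the key intertwining identity $D\circ \exp^{\widetilde{M}}_p = \exp^{X}_{D(p)}\circ\, dD_p$ wherever both sides are defined. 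Since the universal covering $\widetilde{M}\to M$ is itself a local affine diffeomorphism, $M$ is geodesically complete if and only if $\widetilde{M}$ is, so I may work on $\widetilde{M}$ throughout.

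For the easy direction, assume $D$ is a covering map. Given a geodesic $\gamma$ of $\widetilde{M}$ defined on $[0,a)$, its image $D\circ\gamma$ is a geodesic of $X$, which extends to all of $\R$ by completeness of $(X,\nabla^X)$. The path-lifting property of the covering $D$ produces a lift of this extended geodesic starting at $\gamma(0)$; because $D$ is a local affine isomorphism the lift is again a geodesic and it prolongs $\gamma$. Hence every geodesic of $\widetilde{M}$ is complete.

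The substantial direction is the converse, where the hypothesis is completeness of $\widetilde{M}$ and the goal is to show $D$ is a covering map. Here I would run the standard normal-neighbourhood argument. Around each $q'\in X$ choose a normal neighbourhood $U' = \exp^X_{q'}(V')$ with $V'\subset T_{q'}X$ star-shaped and $\exp^X_{q'}|_{V'}$ a diffeomorphism. For each $q\in D^{-1}(q')$, completeness of $\widetilde{M}$ guarantees that $\exp^{\widetilde{M}}_q$ is defined on all of $(dD_q)^{-1}(V')$, and the intertwining identity shows that $D$ maps $U_q:=\exp^{\widetilde{M}}_q\big((dD_q)^{-1}(V')\big)$ diffeomorphically onto $U'$. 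One then checks, using uniqueness of geodesics through $q$ and the local-diffeomorphism property, that the $U_q$ are pairwise disjoint and that $D^{-1}(U')=\bigsqcup_{q} U_q$, so $U'$ is evenly covered. Surjectivity of $D$ follows because its image is open (local diffeomorphism) and closed: if $q'$ lies in the closure of the image, a normal neighbourhood of $q'$ contains some $D(p)$, and lifting through $p$ the geodesic that joins $D(p)$ to $q'$ inside that neighbourhood (possible by completeness of $\widetilde{M}$) exhibits $q'$ in the image; connectedness of $X$ then forces $D$ onto. I expect this converse to be the main obstacle, precisely because in the pseudo-Riemannian/affine setting there is no Hopf--Rinow theorem to lean on --- geodesic completeness does not make $\exp$ surjective or guarantee geodesic connectedness --- so the covering property must be extracted purely from the exponential intertwining and the existence of normal neighbourhoods, with completeness of the source used exactly to define the sets $U_q$ and to lift the joining geodesics.
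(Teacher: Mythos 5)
Your proof is correct, and it is essentially the standard argument behind this statement: the paper gives no proof of its own, deferring to Thurston's Proposition 3.7, whose proof is exactly this reduction to the universal cover, the exponential intertwining identity $D\circ\exp^{\widetilde{M}}_p=\exp^X_{D(p)}\circ dD_p$, and the normal-neighbourhood/evenly-covered argument showing a local affine diffeomorphism with geodesically complete source is a covering map. No gaps to report.
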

\begin{example}[The hyperbolic oscillator geometry]\label{Example: SplitOsc}
    The hyperbolic oscillator (or sometimes called the split oscillator) group $\Osc_s$ is  isomorphic to $\R\ltimes \Heis_3$ where $\R$ acts on Heisenberg fixing its center and by hyperbolic transformations on the Heisenberg group $\Heis_3$ modulo its center.
    This group has a (non-flat) bi-invariant metric of signature $(2,2)$. This is analogous to its Lorentz counterpart the oscillator group $\Osc$, which is isomorphic to $\R\ltimes \Heis_3$, where $\R$ acts by fixing the center and by elliptic transformations the quotient modulo center. Explicitly, the metric of the hyperbolic oscillator is given in the basis $\la T,X,Y,Z\ra$ where $\heis_3$ is spanned by $X,Y,Z$ with $[X,Y]=Z$ by the following \begin{equation}
        \la T,Z\ra=1, \ \la X,Y\ra=1.
    \end{equation} Since the metric is bi-invariant, the center action defines a parallel lightlike vector field $V$ and the leaves of the orthogonal distribution $V^\perp$ are flat. Thus, the hyperbolic oscillator is, in fact, an indecomposable symmetric rank $1$ pp-wave of signature $(2,2)$.  In other words, the hyperbolic oscillator geometry is the $(G,X)$-geometry where $X=\Osc_s$ and $G=\Isom(\Osc_s)$.\end{example}

    \begin{remark}
        The hyperbolic oscillator geometry is a particular bi-invariant geometry. A bi-invariant geometry is defined via a Lie group $L$ that admits a bi-invariant pseudo-Riemannian metric, e.g. when $L$ is semi-simple. Then the bi-invariant geometry is the $(L\times L/\Delta(Z),L)$-geometry, where $\Delta(Z)$ is the diagonal embedding of the center of $L$. For $L=\operatorname{PSL}_2(\R)$ the corresponding bi-invariant geometry is the $3$-dimensional anti de-Sitter geometry.
    \end{remark}
\subsection{Locally symmetric spaces as $(G,X)$-manifolds}\label{Subsection: locally symmetric GX} Let $M$ be a locally symmetric pseudo-Riemannian  manifold. Then $M$ is locally isometric to a simply connected symmetric pseudo-Riemannian space $X$. This is due to the fact, that any local isometry on a simply connected symmetric pseudo-Riemannian space is the restriction of a global isometry (\cite{o1983semi}), it follows that $M$ is naturally a $(\Isom(X),X)$-manifold, in the sense of $(G,X)$-structures (see Subsec. \ref{Subsection: locally symmetric GX}, for more details see \cite[chapter 3]{thurston2022geometry} or \cite{goldman2022geometric}).

On the other hand, an important subgroup of the isometry group is the transvection group, denoted by $\G_X$.  The transvection group is in some special cases ``essentially'' smaller than the full isometry group, that is, they have different dimensions. As in the example of the flat space $\R^{2,n}$, the full isometry group is $\O(2,n)\ltimes \R^{n+2}$, the transvection group is the group $\R^{n+2}$ of pure translations. Considering locally symmetric spaces that are modeled on the transvection geometry, i.e. $(\G_X,X)$-manifolds is a decent first step to understand the geometry of compact manifolds locally modeled on $X$.

\subsection{Foliated geometric structures}

 A smooth manifold $M$ with a foliation $\mathfrak{F}$ is called a foliated manifold.
Here we present the notion of\textit{ tangential $(G,X)$-foliations}. Basically it is a family of $(G,X)$-structures on the leaves that vary continuously from one leaf to another. Later we will show that our symmetric spaces are naturally $(G,X)$-foliated manifolds for a specific $G$ and $X$.
\begin{defi}[\textbf{Tangential} \textbf{$(G,X)$-foliations}]\label{Defi: GX foliation}
We say that a manifold $M$ has a tangential $(G,X)$-foliation if $M$ is a foliated manifold such that the leaves of $\mathfrak{F}$ are modeled on $(G,X)$. More precisely, for every $p\in M$ there exist an open neighborhood  $U$ and  a diffeomorphism (into the image) $\varphi: U\to  \R^k\times X$ such that every leaf of the foliation of $U$ (induced by $\mathfrak{F}$) is mapped to a level $(t,V_t)\subset \R^k\times X$.  Moreover, if $(\varphi_i, U_i)$ and $(\varphi_j, U_j)$ are two such maps, the transition map $\varphi_{ij}:=\varphi_j\varphi_i^{-1}$ has the form $(t,x)\mapsto(\psi(t), g_t(x))$, where $g_t\in G$ and $x \in V_t$ (compare \cite{inaba1993tangentially, hanounah2025completeness}).
\end{defi}

\begin{conv}
    Throughout the paper by a $(G,X)$-foliated manifold we mean a tangentially $(G,X)$-foliated manifold unless otherwise stated.
\end{conv}
Now we give a concrete example of a tangentially $(G,X)$-foliated manifold, via our favorite example, namely the hyperbolic oscillator geometry.

\begin{example}[The hyperbolic oscillator as a $(G,X)$-foliated manifold]\label{Example: SplitOsc foliated}
    The full isometry group of the hyperbolic oscillator group with its bi-invariant metric is virtually equal to the group of left and right multiplication (both are isometric since the metric is bi-invariant), namely, up to finite index \begin{equation*}
        \Isom(\Osc_s)\cong (\Osc_s\times \Osc_s)/\Delta(Z),
    \end{equation*}
    where $\Delta(Z)$ is the diagonal embedding of the center of $\Osc_s$. The center of $\Isom(\Osc_s)$ gives an $\Isom(\Osc_s)$-invariant parallel lightlike vector field $V$, we denote by $\FF$ the flat foliation tangent to $V^\perp$. The stabilizer (in $\Isom(\Osc_s)$) of one  $\FF$-leaf is virtually isomorphic to $\SO^{\circ}(1,1)\ltimes \Heis_5$. Since the leaves are flat with respect to a torsion free connection, they are natural affine manifolds, i.e. $(\Aff(\R^3), \R^3)$-manifolds. However, the structure group for the leaves is reduced to a smaller group. It is given by the stabilizer of one leaf. Indeed, the geometry of an $\FF$-leaf can be described as the maximal affine geometry with two invariant objects: a parallel vector field $W$ and a parallel degenerate Lorentz metric with radical equal to $\R W$. The action of  $\SO^{\circ}(1,1)\ltimes \Heis_5$ on any leaf (affinely equivalent to $\R^3$) is given by the affine group $\L_u(1,1)$. Hence, the oscillator group with its bi-invariant metric is a natural $(\L_u(1,1), \R^3)$-foliated manifold.

\end{example}
\section{Geometry of symmetric rank \texorpdfstring{$1$}{1} pp-waves of signature \texorpdfstring{$(2,n)$}{(2,n)}} \label{Section: geo of models}
Let $\bX$ be a rank $1$ (indecomposable) symmetric pp-wave of signature $(2,n)$ with $\dim(\bX)=n+2$. The goal of this section is devoted to investigate in depth the geometric invariants of $\bX$.
\subsection{Transvection group}
The first step in understanding these spaces is via their transvection group, i.e. the group generated by transvection elements, which is denoted by $\G_\bX$.
\begin{pr}\label{Prop: structure of the transvections}
    Let $\bX$ be a symmetric rank $1$ pp-wave of signature $(2,n)$ with $\dim(\bX)=n+2$.  Then the Lie algebra of the transvection group of $\bX$ is given by $\widehat{\g}_\bX\cong \R\ltimes_L\heis_{2n+1}$, where $\R$ acts trivially on the center of $\heis_{2n+1}$ via a non-inner derivation $L$. Moreover, the symmetric space  $\bX$ is isometric to the homogeneous quotient $\G_\bX/I$ where $I$ is a rank $n$ abelian subgroup of $\Heis_{2n+1}$ transversal to the center. The center of Heisenberg generates the lightlike vector field $V$ and the lightlike leaves are the $\Heis_{2n+1}$-orbits.
\end{pr}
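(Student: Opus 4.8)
The plan is to read the whole statement off the transvection correspondence for simply connected symmetric spaces: if $o\in\bX$ is a base point and $\m=T_o\bX$ is the $(-1)$-eigenspace of the geodesic symmetry, then $\widehat\g_\bX=\m\oplus\k$ with $\k:=[\m,\m]\subset\so(\m)$ the holonomy algebra, the brackets being (up to sign) $[x,y]=R_{x,y}\in\k$ for $x,y\in\m$, $[A,x]=A(x)\in\m$ for $A\in\k,x\in\m$, and $[A,B]$ the commutator of endomorphisms on $\k$. Thus everything is encoded in the curvature tensor $R$ at $o$, and the task reduces to computing $R$, then its span, and the resulting brackets.

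First I would fix a Witt decomposition adapted to the parallel lightlike line. Since $\nabla V=0$, the vector $V\in\m$ is lightlike and holonomy-fixed; choosing a dual lightlike $U$ with $\la U,V\ra=1$ and setting $W:=\{U,V\}^{\perp}$ gives $\m=\R V\oplus\R U\oplus W$ with $W$ of signature $(1,n-1)$. The two structural hypotheses pin down $R$: from $\nabla V=0$ one gets $R_{\cdot,\cdot}V=0$, hence (by the pair symmetry of $R$) $R$ vanishes whenever $V$ is an entry; and the pp-wave flatness of the totally geodesic leaves of $V^{\perp}$ gives $R_{x,y}z=0$ for all $x,y,z\in V^{\perp}=\R V\oplus W$. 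A short computation with the curvature symmetries then shows that the entire tensor is carried by one operator $S\colon W\to W$,
\[
R_{U,x}U=Sx,\qquad R_{U,x}y=-\la Sx,y\ra V\quad(x,y\in W),
\]
all other components vanishing; the pair symmetry of $R$ is exactly the $\la\cdot,\cdot\ra|_W$-symmetry of $S$.

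The crux is to show that rank $1$ forces $S$ to be invertible, and I expect this to be the main obstacle, since it requires translating ``rank $1$'' precisely into a spectral condition on $S$. Since the only nonvanishing curvature operators are the $R_{U,x}$, a vector is holonomy-invariant (equivalently, parallel) as soon as every $R_{U,x}$ kills it. For $w_0\in\ker S$ one computes $R_{U,x}w_0=-\la Sx,w_0\ra V=-\la x,Sw_0\ra V=0$, so every $w_0\in\ker S$ is parallel. A nonzero non-lightlike such $w_0$ splits off a non-degenerate parallel line, contradicting indecomposability; a nonzero lightlike one makes $\Span\{V,w_0\}$ a totally isotropic plane of parallel vectors, i.e. a second independent parallel lightlike direction, contradicting rank $1$. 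Hence $\ker S=0$ and $S$ is invertible.

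With $S$ invertible I would finish by bookkeeping. The operators $A_x:=R_{U,x}$ ($x\in W$) span $\k$, and $A_xA_y-A_yA_x$ sends $U\mapsto(-\la Sx,Sy\ra+\la Sy,Sx\ra)V=0$ and kills $V$ and $W$, so $\k$ is abelian of dimension $\rk S=n$. Collecting the nonzero brackets
\[
[U,x]=A_x,\qquad [x,A_y]=\la Sy,x\ra V,\qquad [U,A_x]=-Sx\quad(x,y\in W),
\]
with $V$ central, one recognizes $\z:=\R V\oplus W\oplus\k$ as $\heis_{2n+1}$: its centre is $\R V$, and $[x,A_y]=\la Sy,x\ra V$ is a non-degenerate $\R V$-valued symplectic pairing (non-degeneracy being invertibility of $S$) between the two Lagrangians $W$ and $\k$. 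The remaining generator $U$ acts by $L:=\ad_U$, a derivation that annihilates the centre and interchanges $W$ and $\k$; since $\mathrm{Im}\,L\not\subset\R V=[\heis_{2n+1},\heis_{2n+1}]$ while every inner derivation of $\heis_{2n+1}$ has image in $\R V$, $L$ is non-inner, giving $\widehat\g_\bX\cong\R\ltimes_L\heis_{2n+1}$. Finally, the isotropy of the transvection action is $\k$, an abelian rank $n$ subalgebra of $\heis_{2n+1}$ transversal to the centre, so $\bX\cong\G_\bX/I$ with $I=\exp\k$; the central line $\R V$ integrates to $V$, and the $\Heis_{2n+1}$-orbit through $o$, whose tangent space is the image of $\heis_{2n+1}$ in $\m$, namely $\R V\oplus W=V^{\perp}$, is exactly a lightlike leaf.
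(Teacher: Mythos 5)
Your proof is correct, and it reaches the same algebra by a genuinely different route than the paper. The paper's proof (Appendix A) never computes the curvature tensor: it works inside the abstract transvection algebra endowed with the $\ad$-invariant extension of the metric and the involution $\Theta$, decomposes $\widehat{\g}_\bX=\R L\oplus \R V\oplus \a$ with $\a=\a^+\oplus\a^-$, and derives all brackets from $\ad$-invariance, the transvection identity $[\widehat{\g}_\bX^-,\widehat{\g}_\bX^-]=\widehat{\g}_\bX^+$, and flatness of the leaves; the Heisenberg structure is then the non-degeneracy of the form $\omega=\la L\cdot,\cdot\ra$ on $\a$. You instead reconstruct $\widehat{\g}_\bX$ from the curvature via the standard correspondence $\m\oplus[\m,\m]$, reduce the whole curvature tensor to one symmetric operator $S$ on the Witt complement $W$, and translate the hypotheses into invertibility of $S$. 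The two key lemmas are exact counterparts (non-degeneracy of $\omega$ corresponds to $\ker S=0$), and both are settled by the identical dichotomy: a kernel vector is parallel, hence either lightlike (contradicting uniqueness of $V$) or non-lightlike (contradicting indecomposability). Your route buys explicitness: $S$ is precisely the classification datum in the style of Cahen--Wallach and Kath--Olbrich, the isotropy is transparently the holonomy algebra $\k$, and the pairing of the Lagrangians $W$ and $\k$ exhibits the Heisenberg algebra concretely. The paper's route buys brevity at exactly the point you compress into ``a short computation with the curvature symmetries'': that step is genuine work (first Bianchi plus the symmetric/antisymmetric vanishing argument to kill the $W$-component of $R_{U,x}y$), whereas the paper's $\ad$-invariance manipulations replace it. One caveat you share with the paper: both proofs read ``rank $1$'' as uniqueness of the parallel lightlike vector field (as stated in the abstract), not literally as uniqueness of pp-wave \emph{structures}; a lightlike $w_0\in\ker S$ is a second parallel lightlike field, but the leaves of $w_0^\perp$ need not be flat, so under the literal definition this case would need an extra word. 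Since the paper's own proof makes the same reading, this is not a defect of your argument relative to the paper.
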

A proof of this fact can be found in the classification by Kath and Olbrich \cite[Theorem 7.10 item (1),(3)-(4)]{kath2009structure}. For more details on the transvection algebra and for a self-contained  proof see Appendix \ref{Appendix}.
\subsection{Blend of geometries}
 We denote the identity component of the isometry group $G_\bX:=\Isom^{\mathsf{o}}(\bX)$, since $\bX$ is simply connected and symmetric, it follows by Gromov's theory of rigid transformation groups that $[\Isom(\bX): G_\bX]$ is finite \cite[Theorem 3.5.A]{gromovrigid}. 
\subsubsection{Parallel lightlike vector field}
The first invariant is the unique (up to scale) parallel lightlike vector field $V$. Due to Proposition \ref{Prop: structure of the transvections} the vector field $V$ is $\G_\bX$-invariant, however, we show it is even $G_\bX$-invariant. Let $\varphi\in G_\bX$ be any element, up to multiplying $\varphi$ by an appropriate $\psi\in \G_\bX$ we can assume that $\varphi$ fixes a point $o$. Thus it is enough to show that $V$ is $I(o)$-invariant where $I(x)$ is the isotropy group of $x$ in $G_\bX$. Now let $\varphi$ and consider its differential $\varphi^*$. Then, the map $\varphi^*$ is an isometric automorphism of the transvection algebra $\widehat{\g}_\bX$ endowed with its bi-invariant metric that comes from the metric on $\bX$ (see \cite[Lemma 2.3]{kath2024pseudo}). Since $V$ is unique up to scale we know that $\varphi^*V=\lambda V$ for some non-zero $\lambda$. However, $\varphi^*$ preserves the metric $\la\cdot, \cdot\ra$ on $\widehat{\g}_\bX\cong \R L\ltimes \heis_{2n+1}$. Since $\la L,V\ra=1$, we deduce that $\varphi^*$ induces an automorphism of $\heis_{2n+1}$ and \begin{equation*}
    \varphi^*L=\frac{1}{\lambda}L \ \mathrm{mod} \ \heis_{2n+1}.
\end{equation*} Moreover, $\varphi^*[L,A]=[\varphi^*L,\varphi^*A]$ for all $A\in \heis_{2n+1}$. As, $[\heis_{2n+1},\heis_{2n+1}]\subset V$ we get \begin{equation*}
    \varphi^*L(A)=\frac{1}{\lambda}L\varphi^*(A) \ \mathrm{mod} \ V,
\end{equation*} for all $A\in \heis_{2n+1}$. Hence, $\lambda\varphi^*L=L\varphi^*$.  On the other hand, by Proposition \ref{Prop: structure of the transvections} we know that $L$ is non-inner, in particular, $L$ is non-zero. Thus, $\lambda=\pm 1$. 
Since, $I(o)$ is connected, we can assume that $\lambda=1$.

Since $V$ is $G_\bX$-invariant, we get a parallel $G_\bX$-invariant orthogonal distribution $V^\perp$, which gives rise to a $G_\bX$-invariant, totally geodesic lightlike foliation $\FF$.  Each leaf $\F$ of the foliation is foliated by the orbits of the $V$-flow, the foliation given by the $V$-orbits is denoted by $\mathcal{V}$.
\subsubsection{Codimension one totally geodesic flat foliation $\FF$}
The condition that the $\FF$-leaves are flat endow each leaf with a natural $(\Aff(\R^{n+1}), \R^{n+1})$-structure. Moreover, since $\bX$ is geodesically complete, each totally geodesic leaf is also complete, hence affinely equivalent to $\R^{n+1}$. In this sense, the space $\bX$ is an $(\Aff(\R^{n+1}), \R^{n+1})$-foliated manifold. However, the existence of the parallel degenerate Lorentz metric $h:=-dx_1^2+\sum_{i=2}^ndx_i^2$ written in $(v,x_1,\cdots,x_n)$, where $v$ is the direction of the radical $h$ which is $\R V$, reduces the structural group of this tangential geometry. Namely, in the above  coordinates the structural group is a subgroup of   \begin{equation}\label{Eq: L_u(1,n-1)}
    \L_u(1,n-1):=\left\{\begin{pmatrix}
    1 & \star \\
    0 & A
\end{pmatrix} | \ A \in \O(1,n-1)  \right\}\ltimes \R^{n+1}.
\end{equation} 

 Up to taking the quotient of a leaf $\F/\V$ (i.e. the quotient of $\F$ by the $V$-action) we get the flat Minkowski space $\R^{1,n-1}$. Let us remark here (not very crucial for later), that the $\V$-foliation on each $\FF$-leaf is a transversal Lorentz foliation and $V$-flow is a transversal Lorentz flow (for more details on transversal $(G,X)$-foliations see \cite{blumenthal,carriere1984flots}, for Lorentz foliations see \cite{boubel2006lorentzian}).
\begin{remark}
    The full isometry group of $\bX$ is up to finite index a subgroup of $(\R\times \O(1,n-1))\ltimes \Heis_{2n+1}$.
\end{remark}
\subsubsection{Isometry invariant translation structure on the space of $\FF$-leaves}\label{para: space of leaves str}
Let $M$ be manifold with a codimension one foliation $\mathfrak{G}$. We say that $\mathfrak{G}$ has a transversal affine structure if there is a submersion $h:\M\to \R$ where $\mathfrak{G}$ is given by the levels of $h$, and there is a group morphism $\varphi:\pi_1(M)\to \Aff(\R)$ and $h$ is $\varphi$-equivariant (see \cite{blumenthal}). Returning to our situation, we consider the one form $\eta:=g(V,\cdot)$, where $g$ is the metric of the symmetric space. The form $\eta$ is clearly $G_\bX$-invariant, since both the metric and the vector field are $G_\bX$-invariant, i.e. for all $k\in G_\bX$ we have $k^*\eta=\eta$ which is equivalent to 
\begin{equation}\label{eq: translations}
    f\circ k=f+c_k \ \ \text{where} \ \ c_k\in \R.
\end{equation} Moreover, since both the metric $g$ and the vector field $V$ are parallel $\eta$ is even closed. So, $\eta=df$ where $f:\bX\to \R$ is a submersion, this gives the foliation $\FF$ a transversal translation structure. The space of leaves $\bX/\FF$, is diffeomorphic to $\R$. Indeed, by Proposition \ref{Prop: structure of the transvections} we know that $\bX\cong \R\ltimes \Heis_{2n+1}/I$, where $I\subset \Heis_{2n+1}$. Moreover, the leaves are given by $\Heis_{2n+1}$-orbits. Finally observe that this translation structure on $\bX/\FF$ is by construction $G_\bX$-invariant.

\subsection{Transvection group invariants}\label{Subsection: transvection invariants}
Considering the geometric invariants for the transvection group reduces the complexity of the situation. We claim that the stabilizer of an $\FF$-leaf in the transvection group is a unipotent subgroup of $\L_u(1,n-1)$, which is the Heisenberg group in ``standard'' form,
\begin{equation}
    \left\{\begin{pmatrix}
    1 & \star \\
    0 & I_n
\end{pmatrix} \right\}\ltimes \R^{n+1}\subset \L_u(n):=\left\{\begin{pmatrix}
    1 & \star \\
    0 & A
\end{pmatrix} | \ A \in \O(n)  \right\}\ltimes \R^{n+1}.
\end{equation}

Indeed, by Proposition \ref{Prop: structure of the transvections} the transvection group is $\R\ltimes \Heis_{2n+1}$. The stabilizer of one leaf in $\G_\bX$ is equal to $\Heis_{2n+1}$. On the other hand, $\Heis_{2n+1}$ acts isometrically on $\bX$, so it must act affinely on the $\FF$-leaves. On the Lie algebra level we can decompose $\heis_{2n+1}=\a^+\oplus\a^-\oplus \R V$, where $\a^+$ is the Lie algebra of the isotropy at the point $o$. The equation $[\a^+,[\a^+,\cdot ]]=0$ yields that the isotropy action is unipotent. Now the elements in $\exp(\a^-\oplus V)$ are in fact transvection elements of the symmetric space $\bX$. However, since these transvection elements preserve the flat leaves, they induce transvection elements of the flat affine space, which are just pure translations. The latter are clearly unipotent. To summarize the $\Heis_{2n+1}$-action is unipotent and has the claimed form.

The situation is drastically more difficult, with respect to the completeness problem, if we consider the full isometry group instead of only the transvection group, since the $\V$-foliation is a transversal Lorentz foliation and no longer a Riemannian foliation (as seen in Example \ref{Example: SplitOsc foliated}).

\section{Completeness result for the transvection geometry}\label{Section: completeness transvections}

In this section we show that any compact manifold $M$ locally isometric to a symmetric rank $1$ pp-wave $\bX$ of signature $(2,n)$, such that transitions (in the sense of the geometric structure) are in the transvection group $\G_\bX$ is geodesically complete. Equivalently,
\begin{theorem}\label{Theorem: complete transvection}
    Let $M$ be a compact $(\G_\bX,\bX)$-manifold. Then $M$ is $(\G_\bX,\bX)$-complete. 
\end{theorem}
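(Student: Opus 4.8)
The plan is to build the developing pair $(D,\rho)$ of the $(\G_\bX,\bX)$-structure and show directly that $D\colon\widetilde M\to\bX$ is a diffeomorphism; since $\bX$ is $1$-connected this is exactly $(\G_\bX,\bX)$-completeness, and by Fact~\ref{Fact: complete=complete} it also yields geodesic completeness. Throughout I use the two nested invariant fibrations of $\bX$ from Section~\ref{Section: geo of models}: the $G_\bX$-invariant submersion $f\colon\bX\to\R$ of \S\ref{para: space of leaves str}, whose fibres are the flat leaves $\F\cong\R^{n+1}$ of $\FF$, together with the fibration of each leaf by the $\V$-orbits with Minkowski quotient $\F/\V\cong\R^{1,n-1}$. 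The decisive structural point is that the invariant metric on $\bX$ is indefinite and the leaf isotropy $\Heis_{2n+1}$ acts unipotently (\S\ref{Subsection: transvection invariants}); in particular $\bX=\G_\bX/I$ admits no $\G_\bX$-invariant Riemannian metric, so the usual shortcut — pull back a complete invariant Riemannian metric and use that a local isometry out of a complete space is a covering — is unavailable, and completeness must be extracted layer by layer.

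First I would treat the transverse direction. By $G_\bX$-invariance the closed $1$-form $\eta=g(V,\cdot)=df$ descends to a nowhere-vanishing closed $1$-form on $M$, and $\hat f:=f\circ D\colon\widetilde M\to\R$ satisfies the equivariance $\hat f(\gamma x)=\hat f(x)+c_{\rho(\gamma)}$ of \eqref{eq: translations}; its fibres are the leaves of the pulled-back null foliation $\widetilde\FF$. Choosing a field $N$ on the compact manifold $M$ with $\eta(N)\equiv 1$, $N$ is complete, and its lift $\widetilde N$ to $\widetilde M$ is a complete field with $\hat f(\widetilde\psi_s(x))=\hat f(x)+s$ along its flow $\widetilde\psi_s$. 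Hence $\hat f$ is onto, the flow trivialises $\widetilde M\cong\R\times F$ with $F=\hat f^{-1}(0)$, and every level set $\hat f^{-1}(s)=\widetilde\psi_s(F)$ is a single leaf that is connected and, since $\pi_1(\widetilde M)=\pi_1(\R\times F)=\pi_1(F)$, simply connected. This reduces the theorem to \emph{leafwise completeness}: for each $s$, the restriction $D\colon\hat f^{-1}(s)\to\F_s\cong\R^{n+1}$ is a diffeomorphism. Granting this, $D$ is a bijective local diffeomorphism — onto because $\bX=\bigsqcup_s\F_s$, injective because $f\circ D=\hat f$ separates the fibres — hence a diffeomorphism.

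For leafwise completeness I would recurse one layer down inside a fixed simply connected leaf $F$ with its induced flat affine structure of unipotent holonomy. Here the parallel field $V$ is $G_\bX$-invariant, hence descends to $M$ and, $M$ being compact, has complete flow; its restriction to $F$ is a complete flow tangent to the $\V$-orbits, so each $\V$-leaf of $F$ develops diffeomorphically onto a full $V$-orbit $\cong\R$ of $\F_s$. Thus $F$ fibres over $B:=F/\widetilde\V$ with complete line fibres, and since $\pi_1(B)=\pi_1(F)=1$ it remains to see that the induced structure on $B\cong\R^{1,n-1}$ — a Minkowski translation structure, the residual linear holonomy being trivial — develops diffeomorphically. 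This base case I would again handle by the same compactness mechanism: a complete transverse field coming from the transversal translation structure controls the quotient, forcing the translation developing map on $B$ to be a diffeomorphism. Assembling the fibre and base trivialisations shows $D|_F$ is a diffeomorphism onto $\F_s\cong\R^{n+1}$, which closes the argument.

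The main obstacle is precisely this leafwise step: the leaves $F$ are non-compact, so one cannot invoke completeness criteria for \emph{closed} affine manifolds with unipotent (equivalently distal) holonomy, nor fall back on a pulled-back complete invariant Riemannian metric, since none exists. The whole difficulty is to convert the global compactness of $M$ — which renders the invariant flows ($V$ and the transverse field) complete — into completeness of the non-compact leaves and of their Minkowski quotients, one layer at a time. Making the translation-structure base case fully rigorous, most naturally through the completeness theory for tangential $(G,X)$-foliations of \cite{inaba1993tangentially,hanounah2025completeness}, is where the real content of the proof lies.
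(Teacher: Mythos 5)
Your transverse argument is correct and is essentially the paper's own: the closed $G_\bX$-invariant form $\eta$, a field $N$ on the compact manifold $M$ with $\eta(N)\equiv 1$ whose flow is complete by compactness, the trivialisation $\widetilde M\cong \R\times F$, and the fact that $f\circ D=\hat f$ separates the level sets reduce everything to leafwise completeness, exactly as in the paper's proof. The difference lies in how leafwise completeness is obtained: the paper does not recurse inside a leaf at all, but observes that $(M,\FF_M)$ is a compact tangentially $(\L_u(n),\R^{n+1})$-foliated manifold (this is where the transvection hypothesis enters, via Subsection \ref{Subsection: transvection invariants}) and invokes Fact \ref{Fact: Lu structure}, i.e.\ Theorem 1.9 of \cite{hanounah2025completeness}, as a black box.

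Your leafwise step has a genuine gap at its base case. The fibre direction is fine: $V_M$ is complete on the compact $M$, and $D$ intertwines $\widetilde V$ with $V_\bX$, so each $\V$-orbit develops bijectively onto a full line. But for the quotient $B=F/\widetilde{\V}$ you assert that ``the same compactness mechanism'' and ``a complete transverse field'' force the translation developing map $\bar D\colon B\to\R^{n}$ to be a diffeomorphism. There is no compactness available at this layer: both the leaf $F$ and $B$ are non-compact, and a translation structure on a simply connected non-compact manifold can be badly incomplete (any open subset of $\R^n$ with the inclusion as developing map). Completeness of flows is furnished only by compactness of $M$, so the fields you need must be globally defined on $M$, tangent to $\FF_M$, and dual to globally defined leafwise translation forms $\omega_1,\dots,\omega_n$; such forms exist precisely because the leafwise structure group of the transvection geometry is the unipotent group in standard form, which fixes each coordinate form $dx_i$ (for the full $\L_u(n)$, with its $\O(n)$-part, the forms are only defined up to rotation, so this is exactly where the transvection hypothesis must be used). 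You neither construct these global objects nor explain how compactness of $M$ reaches down to $B$, and after that one still needs an open-closed (control-type) argument to conclude bijectivity. Moreover, the literature you fall back on, \cite{inaba1993tangentially,hanounah2025completeness}, concerns \emph{compact} foliated manifolds, so it does not apply to your base case as you set it up; the way to use it is to apply it to $(M,\FF_M)$ directly, which is the paper's Fact \ref{Fact: Lu structure} and makes the recursion unnecessary. (A smaller unaddressed point: Hausdorffness of the quotient $B=F/\widetilde{\V}$ also requires an argument, though it does follow from completeness of the $\V$-flow.)
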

Recall the principle of $G$-invariant objects in paragraph \ref{G invariant}.
We will introduce the following notation. 
\begin{notation}
    We denote the  objects ``pullbacked'' from the model $\bX$ to $M$ by a subscript $M$, e.g. $V_M$, and the lifted object on the universal cover by a subscript $\widetilde{M}$, e.g. $V_{\M}$.
\end{notation}
\subsection{Strategy of the proof} First, we show that the $\mathfrak{F}_M$-leaves are geodesically complete, i.e. they are developed bijectively onto the affine space. Since $M$ is a compact $(\L_u(n), \R^{n+1})$-foliated manifold, we use the Following Fact \ref{Fact: Lu structure}.
Finally, we complete the proof using an equivariant (with respect to the developing map of $\widetilde{M}$) transverse flow to the $\mathfrak{F}_\M$-leaves. 
\begin{fact}[Theorem 1.9 in \cite{hanounah2025completeness}]\label{Fact: Lu structure}
   Let $(M,\mathfrak{F})$ be a compact $(\L_u(n), \R^{n+1})$-foliated manifold.  The leaves of $\mathfrak{F}$ are $(\L_u(n), \R^{n+1})$-complete.
\end{fact}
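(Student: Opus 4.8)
The plan is to use the extension structure of $\L_u(n)$ over the Euclidean group to separate the completeness of a leaf into a longitudinal (radical) direction, controlled by the global compactness of $M$, and a transverse Euclidean direction, controlled by the compactness of the linear holonomy. Recall that every element of $\L_u(n)$ fixes the parallel vector field $\partial_v$ spanning the radical of the degenerate Euclidean metric $h$, and that the linear quotient $\pi\colon\R^{n+1}\to\R^{n+1}/\R\partial_v\cong\R^n$ is equivariant for a surjection $\L_u(n)\to\Euc(n)=\O(n)\ltimes\R^n$. Hence each leaf $L$ of $\mathfrak F$ inherits: a developing map $D\colon\widetilde L\to\R^{n+1}$ with holonomy $\rho\colon\pi_1(L)\to\L_u(n)$; a nowhere-vanishing parallel vector field $W$ (the pullback of $\partial_v$) that $D$ intertwines with the translation field $\partial_v$; and a transverse $(\Euc(n),\R^n)$-structure with developing map $\bar D:=\pi\circ D$. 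I would prove completeness by showing that the $W$-orbits develop onto entire affine lines and that the transverse structure is complete.

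\textbf{Longitudinal direction.} The field $W$ is the restriction to $\mathfrak F$ of a globally defined, nowhere-vanishing parallel vector field $W_M$ on the compact manifold $M$; its flow is therefore complete, and being tangent to $\mathfrak F$ it acts completely on each leaf. Lifting to $\widetilde L$ and using that $D$ conjugates the $W$-flow to the one-parameter group of translations $s\mapsto\,\cdot\,+s\,\partial_v$, each $W$-orbit is mapped diffeomorphically onto a full fibre $\pi^{-1}(\mathrm{pt})$ of $\pi$. In particular the $W$-flow on $\widetilde L$ is free and proper, so the orbit space $Q:=\widetilde L/W$ is a manifold, $\widetilde L\to Q$ is a trivial $\R$-bundle, and $\bar D$ descends to a local diffeomorphism $\hat D\colon Q\to\R^n$ which is a local isometry for the flat metrics induced by $h$.

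\textbf{Transverse direction --- the crux.} It remains to prove that $\hat D\colon Q\to\R^n$ is a diffeomorphism, equivalently that the flat transverse metric is complete. The decisive feature is that the transverse linear holonomy lies in the compact group $\O(n)$, so the transverse metric is genuinely Riemannian (positive definite) and its holonomy is bounded; this distality is exactly what fails in the Lorentz analogue with $\O(1,n-1)$, where completeness is open. If $L$ were compact, completeness would follow from Fried's theorem for affine manifolds with distal holonomy \cite{fried1986distality}; the real difficulty is that the leaves of $\mathfrak F$ are in general non-compact, so one must transfer compactness from the total space. Here I would use the compactness of $M$ together with the uniform foliated charts of Definition~\ref{Defi: GX foliation}: a transverse geodesic of the flat metric projects to a curve remaining in the compact $M$, and the locally uniform geometry supplied by the charts allows any such geodesic to be continued indefinitely. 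By Hopf--Rinow the transverse metric is then complete, so the local isometry $\hat D$ of flat Riemannian manifolds is a covering of the complete, simply connected space $(\R^n,\mathrm{eucl})$, hence a diffeomorphism. Making this continuation argument uniform over all (non-compact) leaves is the main obstacle.

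\textbf{Assembly.} Combining the two steps, $\widetilde L\cong Q\times\R$, under which $D$ takes the form $(q,s)\mapsto(\hat D(q),\,c(q)+s)\in\R^n\times\R=\R^{n+1}$ for some function $c$ on $Q$. Since $\hat D$ is a diffeomorphism and each fibre map $s\mapsto c(q)+s$ is an affine diffeomorphism onto the corresponding line, $D\colon\widetilde L\to\R^{n+1}$ is a diffeomorphism. This is precisely the $(\L_u(n),\R^{n+1})$-completeness of $L$, and the argument applies to every leaf of $\mathfrak F$.
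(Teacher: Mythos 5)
The paper itself offers no proof of this Fact: it is imported verbatim from \cite[Theorem 1.9]{hanounah2025completeness}, so your proposal has to stand on its own. Its architecture is the natural one and most of it is sound: splitting leaf-completeness into the longitudinal direction of the invariant field $W$ and the transverse Euclidean direction, handling the first by completeness of the global leafwise vector field $W_M$ on the compact manifold $M$, and reassembling at the end. The longitudinal step is essentially complete (one small omission: properness of the free $\R$-action on $\widetilde L$, needed for $Q=\widetilde L/W$ to be a Hausdorff manifold, should be justified; it follows in one line because $s_k\,\partial_v = D(\phi^{s_k}(p_k))-D(p_k)$ converges whenever $p_k\to p$ and $\phi^{s_k}(p_k)\to q$). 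The assembly step is also correct once both directions are settled. The genuine gap is exactly where you place it yourself: the completeness of the transverse flat Riemannian structure $\hat D\colon Q\to\R^n$. The sentence \emph{``the locally uniform geometry supplied by the charts allows any such geodesic to be continued indefinitely''} is an assertion, not an argument, and your closing admission that making it uniform over non-compact leaves ``is the main obstacle'' concedes the point: that step \emph{is} the theorem; everything else in the proof is routine.

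The gap is genuine for two concrete reasons. First, $Q$ is not a leaf of a foliation of a compact manifold --- it is a quotient of the universal cover of a generally non-compact, possibly dense leaf --- so compactness of $M$ does not bear on $Q$ directly. Second, if you push an inextendible transverse geodesic down into $M$, compactness only produces accumulation points, and these may lie on a \emph{different} leaf; to continue the geodesic you must show convergence in the leaf topology (a plaque argument inside a foliated chart) and then transport the extension back up to $Q$. Your sketch addresses neither issue. The gap is closable along your own lines, as follows. Choose a leafwise $1$-form $\alpha$ on $T\mathfrak{F}$ with $\alpha(W_M)=1$ (via an auxiliary Riemannian metric on $M$) and set $g:=h_M+\alpha\otimes\alpha$, a genuine leafwise Riemannian metric. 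Leaves of a compact foliated manifold are complete for any such metric: the leafwise geodesic spray is a vector field on the compact bundle $\{u\in T\mathfrak{F}: g(u,u)=1\}$, so a unit-speed leafwise geodesic defined on a finite interval converges in $M$, and a plaque argument shows the limit lies in the same leaf, in the leaf topology, so the geodesic extends. Pulling back, $(\widetilde L,g)$ is complete. Now lift a unit-speed $\hat h$-geodesic of $Q$ horizontally with respect to $\ker\alpha$: since $\alpha(\dot\gamma)=0$ forces $g(\dot\gamma,\dot\gamma)=h(\dot\gamma,\dot\gamma)=1$, a maximal $\hat h$-geodesic defined on a finite interval lifts to a finite-length curve in the complete manifold $(\widetilde L,g)$, hence converges, hence extends. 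So $(Q,\hat h)$ is complete, $\hat D$ is a Riemannian covering onto $\R^n$, hence a diffeomorphism, and your assembly finishes the proof. (Your side remark is correct that for \emph{compact} leaves this is already covered by \cite{fried1986distality}, since the linear part of $\L_u(n)$, with $\O(n)$-block, is distal.)
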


 As an immediate consequence we have
\begin{cor}\label{Cor: complete leaves}
     The $\mathfrak{F}_M$-leaves are geodesically complete with respect to the   flat affine connection on the leaves induced by the Levi-Civita connection on $M$.
\end{cor}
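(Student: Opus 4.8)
The plan is to read geodesic completeness of the leaves directly off the $(\L_u(n),\R^{n+1})$-completeness supplied by Fact \ref{Fact: Lu structure}; the only genuine content is to match the two notions of completeness. First I would recall from Subsection \ref{Subsection: transvection invariants} that the stabilizer of an $\FF$-leaf in the transvection group acts on that leaf (which is affinely equivalent to $\R^{n+1}$) through the standard Heisenberg subgroup of $\L_u(n)$. Since the chart transitions of $M$ are restrictions of transvections, and transvections preserve the foliation $\FF_\bX$ leafwise through exactly this subgroup, the pullback foliation $(M,\FF_M)$ is a compact tangential $(\L_u(n),\R^{n+1})$-foliated manifold in the sense of Definition \ref{Defi: GX foliation}; in particular each leaf $\F$ is an honest $(\L_u(n),\R^{n+1})$-manifold.

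Applying Fact \ref{Fact: Lu structure} then yields that every $\FF_M$-leaf is $(\L_u(n),\R^{n+1})$-complete, i.e. the developing map of each leaf into $\R^{n+1}$ is a covering map.

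To conclude, I would invoke Fact \ref{Fact: complete=complete} with $G=\L_u(n)$ and $X=\R^{n+1}$. The model $\R^{n+1}$ is complete for the standard flat affine connection, and this connection is $\L_u(n)$-invariant because $\L_u(n)\subset\Aff(\R^{n+1})$ acts by affine transformations preserving it. By the total geodesy and flatness of the $\FF$-leaves established in Section \ref{Section: geo of models}, the flat affine connection induced on each leaf by the Levi-Civita connection of $M$ coincides with the pullback of this invariant connection, the $\L_u(n)$-structure being merely a reduction of the affine structure carried by the induced connection. Hence Fact \ref{Fact: complete=complete} turns $(\L_u(n),\R^{n+1})$-completeness into geodesic completeness of the leaves with respect to the induced flat affine connection, which is the assertion.

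The one step requiring care is the identification in the last paragraph: one must check that the reduction of the structure group to $\L_u(n)$ does not alter the underlying affine connection, so that the $(\L_u(n),\R^{n+1})$-model connection pulls back to the connection induced by the Levi-Civita connection of $M$ and not to some other flat connection. This is precisely where the geometric analysis of Section \ref{Section: geo of models} is used — that the $\FF$-leaves are totally geodesic and that $\L_u(n)$ sits inside the affine group preserving the flat connection — and it is what makes the corollary genuinely immediate.
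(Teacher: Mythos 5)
Your proposal is correct and follows exactly the paper's own route: identify $(M,\FF_M)$ as a compact $(\L_u(n),\R^{n+1})$-foliated manifold via Subsection \ref{Subsection: transvection invariants}, apply Fact \ref{Fact: Lu structure} to get $(\L_u(n),\R^{n+1})$-completeness of the leaves, and convert this to geodesic completeness via Fact \ref{Fact: complete=complete}. Your closing remark about checking that the connection induced by the Levi-Civita connection agrees with the pullback of the model's flat connection is a point the paper leaves implicit, but it does not change the argument.
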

\begin{proof}
    The foliated manifold $(M, \mathfrak{F}_M)$ is a natural $(\L_u(n), \R^{n+1})$-foliated manifold (see Subsection \ref{Subsection: transvection invariants}). So, the leaves are $(\L_u(n), \R^{n+1})$-complete by the previous fact (Fact \ref{Fact: Lu structure}). This is equivalent to the geodesic completeness by Fact \ref{Fact: complete=complete}. 
\end{proof}
The final ingredient of our proof is the $\G_\bX$-invariant  one form $\eta=g(V,\cdot)$ (see Paragraph \ref{para: space of leaves str}).
 \begin{lemma}
     Let $\eta$ be a non-singular one-form  on $M$. There exists a vector field $W$ such that $\eta(W)=1$. 
 \end{lemma}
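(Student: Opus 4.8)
The plan is to dualize $\eta$ by means of an auxiliary Riemannian metric and then renormalize; the only feature of $\eta$ that the argument uses is that it is nowhere vanishing (which is what ``non-singular'' means, and which holds for $\eta = g(V,\cdot)$ since $V$ is a nowhere-zero lightlike field and $g$ is non-degenerate).

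First I would fix a background Riemannian metric $g_0$ on $M$. Such a metric exists because $M$ is compact, hence paracompact: one glues local Euclidean metrics coming from a finite atlas using a subordinate partition of unity. Using the musical isomorphism of $g_0$, I would then convert $\eta$ into a vector field $\eta^{\sharp}$, characterized by $g_0(\eta^{\sharp}, Y) = \eta(Y)$ for every vector field $Y$. Evaluating this identity at $Y = \eta^{\sharp}$ gives
\[
    \eta(\eta^{\sharp}) \;=\; g_0(\eta^{\sharp},\eta^{\sharp}) \;=\; \|\eta^{\sharp}\|_{g_0}^{2},
\]
which is a smooth real-valued function on $M$. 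Since $g_0$ is positive definite, $\eta^{\sharp}$ vanishes exactly where $\eta$ does; as $\eta$ is non-singular, the function $\|\eta^{\sharp}\|_{g_0}^{2}$ is therefore strictly positive everywhere.

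It then suffices to rescale: I would set
\[
    W \;:=\; \frac{\eta^{\sharp}}{\|\eta^{\sharp}\|_{g_0}^{2}}.
\]
The denominator is a nowhere-zero smooth function, so $W$ is a globally defined smooth vector field, and by construction $\eta(W) = \|\eta^{\sharp}\|_{g_0}^{2}/\|\eta^{\sharp}\|_{g_0}^{2} = 1$ at every point. There is essentially no obstacle in this argument: it is a soft consequence of the existence of a Riemannian metric, and the hypothesis enters only to guarantee strict positivity of the normalizing function. If one prefers to bypass the metric altogether, the same conclusion follows by a partition-of-unity patching: cover $M$ by charts on each of which some coordinate vector field $W_i$ satisfies $\eta(W_i) \neq 0$, rescale so that $\eta(W_i) = 1$, and set $W = \sum_i \rho_i W_i$ for a subordinate partition of unity $\{\rho_i\}$, so that $\eta(W) = \sum_i \rho_i \eta(W_i) = \sum_i \rho_i = 1$.
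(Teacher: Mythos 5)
Your proof is correct, but it takes a genuinely different route from the paper's. The paper argues softly and abstractly: the locus $\{v\in T_pM : \eta_p(v)=1\}$ is an affine subbundle of the tangent bundle (each fiber a translate of $\ker\eta_p$, hence contractible), and a fiber bundle with contractible fibers over $M$ admits a global section, which is the desired $W$. You instead produce $W$ explicitly: dualize $\eta$ with an auxiliary Riemannian metric $g_0$ and rescale by $\|\eta^{\sharp}\|_{g_0}^{-2}$, which is legitimate precisely because non-singularity of $\eta$ makes that norm strictly positive everywhere. Your construction is more elementary and self-contained, avoiding the section-existence fact for bundles with contractible fibers (which in full generality rests on obstruction theory, though for affine bundles it follows from partitions of unity). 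Indeed, your second, partition-of-unity variant is essentially the hands-on proof of the very fact the paper invokes: the key point in both is that the solution set of $\eta(v)=1$ is an affine subspace of each tangent space, so convex combinations of local solutions remain solutions. What the paper's formulation buys is brevity and the conceptual remark that only contractibility of the fibers matters; what yours buys is an explicit formula for $W$ and no appeal to bundle theory.
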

 \begin{proof}
     The locus where the one-form $\eta$ has value one is a subbundle of the affine tangent bundle. Each fiber is given by a translation of the kernel of $\eta$. So, this is a fiber bundle over $M$ with contractible fiber. Hence, there is a section, which is our vector field $W$. \end{proof}
 \paragraph{\textbf{Structure of the universal cover.}} We use the next fact to write the universal cover as a product of one $\mathfrak{F}_\M$-leaf   and the real line.
\begin{fact}\label{Fact: structure of the universal cover}
Let $M$ be a compact manifold endowed with a  nowhere vanishing closed one-form $\eta$. Then the universal cover of $M$ is diffeomorphic to $\R\times \F_\M$ where $\F_\M$ the foliation tangent to the $\Ker(\eta)$. In particular, the $\mathfrak{F}_\M$-leaves are all diffeomorphic and simply connected.
\end{fact}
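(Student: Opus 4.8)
The plan is to turn the nowhere-vanishing closed form $\eta$, together with the transverse field $W$ furnished by the preceding lemma, into an explicit global product decomposition of $\M$, using compactness of $M$ at exactly one crucial point. First I would pull $\eta$ back to the universal cover $\M$. Since $\M$ is simply connected and $\eta$ is closed, its pullback $\tilde\eta$ is exact, so $\tilde\eta=df$ for a smooth function $f\colon\M\to\R$. As $\eta$ is nowhere vanishing, $df$ never vanishes, so $f$ is a submersion whose level sets are smooth hypersurfaces; the lifted distribution $\Ker(\tilde\eta)=\Ker(df)$ is tangent to these level sets, which is to say that the leaves of $\F_\M$ are (the components of) the level sets of $f$.

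Next I would lift the vector field $W$ with $\eta(W)=1$ to a field $\widetilde W$ on $\M$. Here is where compactness enters: since $M$ is compact the flow of $W$ is complete, and lifting integral curves through the covering projection shows $\widetilde W$ is complete as well, with flow $\widetilde\phi_t$ defined for all $t\in\R$. The normalization $\eta(W)=1$ pulls back to $df(\widetilde W)=\tilde\eta(\widetilde W)=1$, so $f$ increases at unit rate along the flow:
\[ f(\widetilde\phi_t(x))=f(x)+t. \]
Setting $F:=f^{-1}(0)$, I would then define $\Phi\colon\R\times F\to\M$ by $\Phi(t,x)=\widetilde\phi_t(x)$. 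The displayed identity makes $\Phi$ injective — comparing $f$-values forces the $\R$-coordinates to agree, after which injectivity of the flow forces the $F$-coordinates to agree — and surjective, since for any $z$ the point $\widetilde\phi_{-f(z)}(z)$ lies in $F$. Its inverse $z\mapsto\bigl(f(z),\widetilde\phi_{-f(z)}(z)\bigr)$ is smooth, so $\Phi$ is a diffeomorphism. This is precisely the Ehresmann fibration theorem, with the complete transverse field $\widetilde W$ playing the role normally played by properness.

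Finally I would read off the three assertions from $\M\cong\R\times F$. Because $\M$ is connected and a product $\R\times F$ is connected iff $F$ is, the leaf $F$ is connected; hence the level sets of $f$ are connected and coincide with the leaves of $\F_\M$. Each leaf $f^{-1}(c)=\widetilde\phi_c(F)$ is a diffeomorphic image of $F$, so all leaves are diffeomorphic, and since $\M\cong\R\times F$ is simply connected we get $\pi_1(F)=\pi_1(\R\times F)=1$, so the leaves are simply connected.

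The hard part — the only place that is not formal — will be the completeness of the lifted field $\widetilde W$ and the resulting verification that $\Phi$ is a \emph{global} diffeomorphism rather than merely a local one. Without compactness of $M$ the flow of $W$ need not be complete, the identity $f\circ\widetilde\phi_t=f+t$ would only hold on a partial flow domain, and the clean product structure could fail; so the entire argument hinges on promoting the local submersion structure to a global trivialization via a complete transverse flow, which is exactly what compactness supplies.
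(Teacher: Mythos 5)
Your proof is correct. The paper does not argue this fact itself but defers to the cited reference (Leistner--Schliebner, Proposition 8), and your argument --- taking a primitive $f$ of the lifted form on $\M$, lifting the transverse field $W$ with $\eta(W)=1$ whose flow is complete by compactness, and using $f(\widetilde\phi_t(x))=f(x)+t$ to exhibit $(t,x)\mapsto\widetilde\phi_t(x)$ as a global diffeomorphism $\R\times f^{-1}(0)\to\M$ --- is essentially the same trivialization-by-flow argument used there.
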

A proof of the previous fact can be found in \cite[Proposition 8]{leistner2016completeness}.

\begin{proof}[Proof of Theorem $\ref{Theorem: complete transvection}$]
     Fix $(D, \rho)$ a developing pair for the $(\G_\bX,\bX)$-structure on $M$. By Fact \ref{Fact: complete=complete} and because  $\bX$ is simply connected, the   proof is reduced to show that $D$ is a diffeomorphism onto $\bX$.
     Since the foliation $\mathfrak{F}$ is $\G_\bX$-invariant, the restriction of $D$ to any (simply connected) $\mathfrak{F}_\M$-leaf is a developing map for the $(\L_u(n), \R^{n+1})$-geometry. Thus, the restriction of $D$ is bijective (by Corollary $\ref{Cor: complete leaves}$), namely $D_{|\F_\M}: \F_\M\to \R^{n+1}$ is a diffeomorphism. 
     
     Now, let $\eta_M$ (resp. $\eta_\M$) denote the pullbacked one form on $M$ (resp. on $\M$). Let $W$ be a smooth vector field on $M$ such that $\eta_M(W)=1$. The flow of $\widetilde{W}$ clearly preserves the $\mathfrak{F}_\M$-leaves and its flow is complete (i.e. defined for all $t\in \R$) because $M$ is compact. Moreover, due to the fact that $\eta$ is $\G_\bX$-invariant we have the following equivariance formula \begin{equation}\label{Eq: equivariance}
         D(\phi^t_{\widetilde{W}}(\F_\M))=(\phi_{W_\bX}^t\circ D)(\F_\M)=\phi_{W_\bX}^t(\F),
     \end{equation}
     where $\phi^t$ denotes the flow, and $W_\bX$ is any vector field such that $\eta(W_\bX)=1$. We observe that the flow of $\widetilde{W}$ acts freely and transitively on the space of $\mathfrak{F}_\M$-leaves. Now we are ready to show that $D$ is injective on $\M$. For the sake of contradiction assume that there exist two different points $\Tilde{p}_1, \Tilde{p}_2$ such that $D(\Tilde{p}_1)=D(\Tilde{p}_2)$.  Then $\Tilde{p}_1, \Tilde{p}_2$ necessarily belong to different leaves since we already know that the restriction of $D$ to each leaf is injective. However, this means that the two leaves $\Tilde{p}_i\in\F_\M^{t_i}$ are mapped to the same leaf  $\F$ by the developing map. On the other hand we know that $\phi^{t_2-t_1}(\F_\M^{t_1})=\F_\M^{t_2}$. Applying $D$ to both sides and using the equivariance we get \begin{equation}
         \phi_{W_\bX}^{t_2-t_1}(D(\F_\M^{t_1}))=D(\F_\M^{t_2}),
     \end{equation}
     which is equivalent to $\phi_{W_\bX}^{t_2-t_1}(\F)=\F$, this is absurd, since we assumed that $\Tilde{p}_1, \Tilde{p}_2$ belong to different leaves, meaning that  $t_2\neq t_1$, and  the flow of $W_\bX$ acts freely on the space of $\mathfrak{F}$-leaves. 
     
    Having the bijectivity of the developments of the lightlike leaves, it is clear that $D$ is surjective if and only if it induces a bijection from the space of $\FF_\M$-leaves to the space of $\FF$-leaves. However, since $M$ is compact the flow of $W$ is complete, thus, so is $\widetilde{W}$. The equivariance formula (\ref{Eq: equivariance}) then implies the surjectivity of $D$. \end{proof}
  
\subsection{Application in dimension 4: Transvection geometry}\label{Subsec: application 4d}
Recently in \cite{kath2024pseudo} the authors gave an explicit description of $1$-connected indecomposable non-semisimple symmetric spaces of of signature $(2,2)$. In particular, they describe symmetric rank $1$ pp-waves of signature $(2,2)$. In their notation these spaces are given by three families. Each family depends on several continuous and discrete parameters. Different parameters give rise to spaces in different isometry classes, hence, one gets a continuous family of non-isometric symmetric spaces.  We list them below.  Recall the notation $G_\bX=\Isom^{\mathsf{o}}(\bX)$ from Section \ref{Section: geo of models}.
\subsubsection{$\bX_1$-spaces} This family is denoted by $\bX_1(\varepsilon_1, \varepsilon_2, \lambda)$ where $\varepsilon_1, \varepsilon_2 \in \{+1, -1\}$ and $\lambda\in \R_{>0}$. The identity component of the full isometry group is determined by the above parameters in the following way \cite[Proposition 3.11 (i)]{kath2024pseudo}:
\begin{itemize}
    \item \textbf{Essential-type.} This occurs exactly when $\varepsilon_1\neq \varepsilon_2$ and $\lambda= 1$. Then we have an essentially bigger isometry group, namely, $\G_{\bX_1}\subsetneq G_{\bX_1}\cong(\R\times \SO^{\circ}(1,1))\ltimes \Heis_5$.
    \item \textbf{Transvection-type.} In all other cases, we have that $G_{\bX_1}=\G_{\bX_1}\cong\R\ltimes \Heis_5$.
\end{itemize}

The two interesting situations are $ \bX_\H:=\bX_1(1,-1,1)$ and $\bX_\E:=\bX_1(-1,1,1)$, where the isometry group is ``essentially'' larger than the transvection group. The space  $\bX_\H$  is particularly interesting because it is isometric to the hyperbolic oscillator with its bi-invariant metric of signature $(2,2)$ (see example \ref{Example: SplitOsc}). 

\subsubsection{The remaining families of symmetric rank $1$ pp-waves of signature $(2,2)$}
All the other $4$-dimensional symmetric rank $1$ pp-waves are of transvection type,  namely, for any parameter defining the remaining model spaces we have $G_{\bX}=\G_{\bX}\cong \R\ltimes \Heis_5$ (for the proof see \cite[Proposition 3.11 (ii)-(iii)]{kath2024pseudo}).

\subsubsection{Clifford--Klein forms} In addition to \cite{kath2024pseudo} and very recently, Maeta in \cite{Matea} studied Clifford--Klein forms of the latter models. Namely, he characterizes for which symmetric rank $1$ pp-wave $\bX$ of signature $(2,2)$ there exists a discrete subgroup $\Gamma\subset \Isom(\bX)$ which acts properly cocompactly and freely on $\bX$. The quotient $\Gamma\backslash X$ is called a \textit{Clifford--Klein form} of $\bX$. In the language of $(G,X)$-structures, $M$ is a compact complete $(\Isom(\bX),\bX)$-manifold. 

More interestingly, in \cite{kath2024pseudo} the authors show in dimension $4$ that symmetric rank $1$ pp-waves are the only $1$-connected indecomposable non-semisimple symmetric spaces of signature $(2,2)$ that admit Clifford--Klein forms. More precisely, by \cite{Matea,kath2024pseudo} the two spaces $\X_\H$ and $\X_\E$  are the only ones that have Clifford--Klein forms.

Before the next result we recall the following,
\begin{remark}\label{Obser: finite index completenesss}
    Let $G_0\subset G$ be a finite index normal subgroup. Let $M$ be a compact $(G,X)$-manifold. If $(G_0,X)$-manifolds are $(G_0,X)$-complete, then $M$ is $(G,X)$-complete.
\end{remark}
\begin{proof}
Let $(D, \rho)$ be a developing pair.
    Define $\Gamma:=\rho(\pi_1(M))$ and $\Gamma^0:=\Gamma\cap G_0$. We see that $\rho^{-1}(\Gamma^0)$ is a normal subgroup of finite index of $\pi_1(M)$. Let $M_0$ be the corresponding finite cover $M$. Then $M_0$ is naturally a $(G_0, X)$-manifold. Since, $M_0$ is $(G_0,X)$-complete and it covers $M$, it follows that $M$ is $(G,X)$-complete.
 \end{proof}
In connection with the previous results we establish the following. 

\begin{cor}Let $\bX$ be a $4$-dimensional symmetric rank $1$ pp-wave of transvection type, i.e. up to finite index $\G_\bX=\Isom(\bX)$. Then there are no compact manifolds modeled on $\bX$ i.e.  there are no compact $(\Isom(\bX),\bX)$-manifolds.
\end{cor}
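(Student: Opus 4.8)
The plan is to argue by contradiction, reducing the non-existence of any compact $(\Isom(\bX),\bX)$-manifold to the non-existence of a Clifford--Klein form of $\bX$, and then to invoke the known classification of the latter.

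First I would record that, since $\bX$ is of transvection type, the transvection group coincides with the identity component, $\G_\bX=\Isom^{\circ}(\bX)$, which is normal in $\Isom(\bX)$ (as an identity component) and of finite index in it (by Gromov's rigidity theorem, already used in Section \ref{Section: geo of models}). This is exactly the configuration required to apply Remark \ref{Obser: finite index completenesss} with $G_0=\G_\bX$ and $G=\Isom(\bX)$: Theorem \ref{Theorem: complete transvection} guarantees that every compact $(\G_\bX,\bX)$-manifold is $(\G_\bX,\bX)$-complete, so the Remark upgrades this to the statement that \emph{every} compact $(\Isom(\bX),\bX)$-manifold is $(\Isom(\bX),\bX)$-complete.

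Next I would suppose, for contradiction, that a compact $(\Isom(\bX),\bX)$-manifold $M$ exists. By the previous step $M$ is complete, and since $\bX$ is simply connected the developing map is a diffeomorphism onto $\bX$; thus $M\cong \Gamma\backslash\bX$ for a discrete $\Gamma\subset\Isom(\bX)$ acting properly, freely, and cocompactly. In other words, $M$ is a Clifford--Klein form of $\bX$ in the sense recalled above. But by the results of Maeta and Kath--Lyko \cite{Matea,kath2024pseudo}, among the $4$-dimensional symmetric rank $1$ pp-waves of signature $(2,2)$ only $\X_\H$ and $\X_\E$ admit Clifford--Klein forms, and both of these are of \emph{essential} type. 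Since $\bX$ is of transvection type, it is neither $\X_\H$ nor $\X_\E$, hence admits no Clifford--Klein form --- contradicting the existence of $M$. Therefore no compact $(\Isom(\bX),\bX)$-manifold exists.

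Because both ingredients --- the completeness Theorem \ref{Theorem: complete transvection} and the classification of \cite{Matea,kath2024pseudo} --- are already in hand, the deduction itself is short; the only point demanding care is the verification that the transvection-type hypothesis genuinely places us in the scope of Remark \ref{Obser: finite index completenesss}. This is precisely where that hypothesis is indispensable: for an essential-type space $\G_\bX$ is a proper subgroup of strictly smaller dimension, so Theorem \ref{Theorem: complete transvection} no longer controls the full isometry group, and indeed such spaces (namely $\X_\H$ and $\X_\E$) do carry compact quotients.
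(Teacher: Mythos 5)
Your proposal is correct and follows essentially the same route as the paper: combine Remark \ref{Obser: finite index completenesss} with Theorem \ref{Theorem: complete transvection} to show any compact $(\Isom(\bX),\bX)$-manifold is complete, hence a Clifford--Klein form of $\bX$, and then invoke \cite{Matea,kath2024pseudo} to rule out Clifford--Klein forms for transvection-type spaces. Your extra remark that $\G_\bX=\Isom^{\circ}(\bX)$ is normal in $\Isom(\bX)$ is a welcome bit of care (the Remark does require normality of the finite-index subgroup), but it is the same argument.
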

\begin{proof}
 Let $\bX$ be one of the spaces mentioned in the statement.  We have that $[\Isom(\bX):G_\bX]<\infty$. Now, let $M$ be a compact $(\Isom(\bX), \bX)$-manifold, by Remark \ref{Obser: finite index completenesss}, $M$ has a finite cover $M_0$ which is $(\G_\bX,\bX)$-manifold. Applying Theorem \ref{Theorem: complete transvection} we deduce that $M_0$ is geodesically complete. Hence $M$ is geodesically complete. It follows that $\Gamma$ acts properly on $\bX$ with the quotient identified to $M$, in other words $M$ is a Clifford-Klein form of $\X$. However, by \cite[Theorem 1.7]{Matea} the above spaces do not admit Clifford--Klein forms.
\end{proof}
\begin{remark}
    The above corollary is a stronger statement than \cite[Theorem 1.7]{Matea}. For, a priori, a compact model is not necessarily a Clifford--Klein form.
\end{remark}

\section{The \texorpdfstring{$4$}{4}-dimensional Kleinian case}\label{Section: completeness 4d}
In this section we pursue the completeness question in dimension $4$, for the two remaining ``essential-type'' spaces, without assuming anything on the transition maps. However, under the assumption that $M$ is Kleinian, i.e. $D$ is a covering map onto a subset $\Omega\subset \bX$ such that the holonomy group $\Gamma$ acts on it properly, cocompactly and freely (see \cite{kulkarni2006uniformization}). 

The remaining spaces are the hyperbolic oscillator (space/group) $\mathbf{X}_\H$ with its bi-invariant metric together with the ``elliptic'' space $\mathbf{X}_\E$. Thanks to Remark \ref{Obser: finite index completenesss} we can reduce the study of completeness of $\mathbf{X}_\H$ and $\mathbf{X}_\E$ to the identity components of their isometry groups $G_\H$ and $G_\E$ respectively.

\subsubsection{The leaf structure}\label{Subsection: leaf structure}
We recall that $\bX \in \{\mathbf{X}_\E, \mathbf{X}_\H\}$ has a codimension one flat foliation $\FF$ tangent to $V^\perp$. The group $G_{\bX}$ acts on the space of $\FF$-leaves by translations.  The stabilizer of an $\FF$-leaf is isomorphic to $\L_u(1,1)\cong\SO^{\circ}(1,1)\ltimes \Heis_5$, see (\ref{eq: unimodular group}), which acts by affine transformations on each leaf.

We recall from Section \ref{Section: completeness transvections}, that we have 
\begin{pr}\label{Prop: space of F-leaves}
    The developing map $D:\M\to\bX$ induces a diffeomorphism from the space of $\mathfrak{F}_\M$-leaves to the space of $\mathfrak{F}$-leaves. 
\end{pr}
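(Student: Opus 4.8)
The plan is to show that the developing map $D:\widetilde{M}\to\bX$ descends to a well-defined bijection $\overline{D}$ between the space of $\mathfrak{F}_{\widetilde M}$-leaves and the space of $\mathfrak{F}$-leaves, and that this descended map is a diffeomorphism. The key structural input is the $G_\bX$-invariant closed one-form $\eta=g(V,\cdot)$ described in Paragraph \ref{para: space of leaves str}. Since $\eta$ is $G_\bX$-invariant and closed, its pullback $\eta_M$ on $M$ is a nowhere-vanishing closed one-form, and by Fact \ref{Fact: structure of the universal cover} the universal cover splits as $\widetilde M\cong \R\times\F_{\widetilde M}$, with the $\R$-factor recording the value of the ``height'' function $f$ (where $\eta=df$) along a transverse flow. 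This already identifies the space of $\mathfrak{F}_{\widetilde M}$-leaves with $\R$.

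First I would transport the transversal translation structure to the model. On $\bX$ the function $f:\bX\to\R$ with $df=\eta$ is a submersion whose level sets are exactly the $\FF$-leaves, so the space of $\FF$-leaves is canonically identified with $\R$, and $G_\bX$ acts on this $\R$ by translations via the cocycle $c_k$ in equation (\ref{eq: translations}), i.e.\ $f\circ k=f+c_k$. Next, using the equivariance $D(\gamma x)=\rho(\gamma)D(x)$ together with $k^*\eta=\eta$, the composite $f\circ D:\widetilde M\to\R$ satisfies $f\circ D\circ\gamma = f\circ D + c_{\rho(\gamma)}$, so $f\circ D$ descends to an affine-equivariant submersion of the leaf spaces. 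Because $D$ is a local diffeomorphism and maps $\mathfrak{F}_{\widetilde M}$-leaves into $\FF$-leaves (the foliation $\FF$ being $G_\bX$-invariant), the induced map $\overline D:\widetilde M/\mathfrak{F}_{\widetilde M}\to\bX/\FF$ is a local diffeomorphism of one-manifolds both identified with $\R$.

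It remains to upgrade $\overline D$ from a local diffeomorphism to a global one. Here I would import the transverse-flow argument used in the proof of Theorem \ref{Theorem: complete transvection}: choose a vector field $W$ on $M$ with $\eta_M(W)=1$, so its lift $\widetilde W$ has complete flow (compactness of $M$) acting freely and transitively on the space of $\mathfrak{F}_{\widetilde M}$-leaves, while by invariance of $\eta$ the development intertwines $\phi^t_{\widetilde W}$ with the translation flow $\phi^t_{W_\bX}$ on $\bX$ as in the equivariance formula (\ref{Eq: equivariance}). Since the model flow $\phi^t_{W_\bX}$ moves the height $f$ by exactly $t$ and acts freely and transitively on the $\R$ of $\FF$-leaves, the intertwining forces $\overline D$ to be the identity translation up to a constant—in particular a bijection. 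Concretely, $f(D(\phi^t_{\widetilde W}(\widetilde p)))=f(D(\widetilde p))+t$, so $\overline D$ is a proper injective local diffeomorphism of $\R$, hence a diffeomorphism.

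The step I expect to be the main obstacle is verifying that $D$ genuinely sends each $\mathfrak{F}_{\widetilde M}$-leaf \emph{into} a single $\FF$-leaf and that the induced map on leaf spaces is well defined and surjective, rather than merely locally injective. The subtlety is that a priori $D$ need not be injective on $\widetilde M$ in this essential-type setting, so one cannot conclude globally from the leafwise picture alone; the closedness of $\eta$ (guaranteeing $f\circ D$ is a genuine globally defined submersion compatible with the $\R$-splitting of $\widetilde M$) together with completeness of the transverse flow is exactly what rules out the height function ``turning back,'' and hence what forces surjectivity and injectivity of $\overline D$. Once $\overline D$ is established as a bijection of leaf spaces, the statement follows, and it is precisely this proposition that will later feed into the Kleinian analysis of the leafwise developments for $\mathbf X_\H$ and $\mathbf X_\E$.
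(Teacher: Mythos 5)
Your proposal is correct and follows essentially the same route as the paper: the paper proves this proposition by citing verbatim the second paragraph of the proof of Theorem \ref{Theorem: complete transvection}, which is exactly your argument—the transverse vector field $W$ with $\eta_M(W)=1$, completeness of its lifted flow by compactness of $M$, and the equivariance formula (\ref{Eq: equivariance}) intertwining $\phi^t_{\widetilde W}$ with $\phi^t_{W_\bX}$, forcing the induced map on leaf spaces (both identified with $\R$) to be a translation, hence a diffeomorphism. Your reformulation of injectivity via the height function $f$ (i.e.\ $f(D(\phi^t_{\widetilde W}(\widetilde p)))=f(D(\widetilde p))+t$) is only cosmetically different from the paper's freeness-of-the-flow contradiction.
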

\begin{proof}
    The proof is identical to the proof of Theorem \ref{Theorem: complete transvection} (second paragraph).
\end{proof}
So, we get the following
\begin{cor}\label{Cor: leaf complete iff M complete}
     The developing map $D$ is bijective on $\M$ if and only if its restriction to each $\FF_\M$-leaf is bijective.
 \end{cor}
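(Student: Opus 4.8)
The plan is to deduce this corollary directly from Proposition \ref{Prop: space of F-leaves}, which already furnishes the crucial ingredient: $D$ carries each $\FF_\M$-leaf into a single $\FF$-leaf, and the induced map on the spaces of leaves is a bijection. Once this leaf-to-leaf correspondence is in hand, both implications reduce to elementary set-theoretic bookkeeping, using only that distinct leaves (both upstairs on $\M$ and downstairs on $\bX$) are disjoint.

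For the forward implication, I would assume $D$ is bijective on $\M$. Injectivity on each $\FF_\M$-leaf is immediate from global injectivity. For surjectivity onto the corresponding leaf, fix an $\FF_\M$-leaf $\F_\M$ mapping into the $\FF$-leaf $\F$, and pick any $q\in\F$. Since $D$ is surjective there is $\tilde q\in\M$ with $D(\tilde q)=q$; the point $\tilde q$ lies in some $\FF_\M$-leaf whose image is the $\FF$-leaf through $q$, namely $\F$. By injectivity of the induced map on leaf spaces this leaf must be $\F_\M$, so $\tilde q\in\F_\M$ and $D|_{\F_\M}$ hits $q$.

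For the converse --- the direction that actually feeds into the completeness argument --- I would assume $D$ restricts to a bijection on every $\FF_\M$-leaf. To check injectivity, take $\tilde p_1,\tilde p_2$ with $D(\tilde p_1)=D(\tilde p_2)$: if they share a leaf, leafwise injectivity forces $\tilde p_1=\tilde p_2$; if they lie in distinct leaves, these map to distinct, hence disjoint, $\FF$-leaves by Proposition \ref{Prop: space of F-leaves}, contradicting $D(\tilde p_1)=D(\tilde p_2)$. For surjectivity, any $q\in\bX$ lies in some $\FF$-leaf $\F$; the induced bijection on leaf spaces provides the unique $\FF_\M$-leaf $\F_\M$ over $\F$, and leafwise surjectivity of $D|_{\F_\M}$ produces a preimage of $q$.

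The honest point is that there is no genuine obstacle remaining: the substantive work --- transversality of the flow of $\widetilde{W}$ and the freeness of its action on the leaf space --- has already been absorbed into Proposition \ref{Prop: space of F-leaves}, whose proof mirrors the second paragraph of Theorem \ref{Theorem: complete transvection}. The only care required is to invoke disjointness of leaves and the bijectivity of the leaf-space map at exactly the right places, so that the corollary cleanly repackages the global bijectivity of $D$ as a purely leafwise condition. This reduction is precisely what makes the subsequent strategy viable, since it localizes the completeness question to the single problem of developing one $\FF_\M$-leaf onto its image.
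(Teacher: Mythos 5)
Your proof is correct and follows the paper's own route: both directions are deduced from Proposition \ref{Prop: space of F-leaves} (the bijection between leaf spaces) together with disjointness of leaves, which is exactly how the paper argues, merely stated there more tersely (``the other direction is clear''). Your elaboration of the set-theoretic bookkeeping is sound and adds nothing beyond what the paper's proof implicitly contains.
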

 \begin{proof}
     Observe that the image of each $\mathfrak{F}_\M$-leaf is sent by the developing map to an $\mathfrak{F}$-leaf. If $D$ is bijective on each leaf, then by Proposition \ref{Prop: space of F-leaves} we conclude that $D$ is bijective on the whole universal cover. The other direction is clear.
 \end{proof}

Using Fact \ref{Fact: structure of the universal cover} we see that each $\FF_\M$-leaf is the universal cover of an $\FF_M$-leaf. In particular, the  restriction of $D$ to any leaf $\F_\M$ is a developing map for $\F_M$ seen as $(\L_u(1,1),\R^3)$-manifold.

The goal of the next subsection is to show that the developing of the flat lightlike leaves is bijective onto the affine $3$-space. Then by Corollary \ref{Cor: leaf complete iff M complete} it follows that $\Omega=\bX$.
\subsection{Completeness of  the lightlike leaves} 
 \begin{notation}
     We denote by $\Gamma_1:=\Gamma\cap \L_u(1,1)$ and $\Gamma_0:=\Gamma\cap \Heis_5$. Moreover, we define the natural projection $q: \L_u(1,1)\to \SO^{\circ}(1,1)$, and $\widetilde{\psi}:G_\bX\to G_\bX/Z$.
 \end{notation} We now state the following important lemma.

\begin{lemma}\label{Lemma: center transversal}
Let $Z\subset \L_u(1,1)$ be the center of Heisenberg. Assume that $q(\Gamma_1)$ is non-trivial, then we have $\Gamma_0\nsubseteq  Z$.
\end{lemma}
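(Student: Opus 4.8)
The plan is to argue by contradiction: assume $q(\Gamma_1)$ is non-trivial yet $\Gamma_0\subseteq Z$, and show that $\Gamma_1$ is then too small to act cocompactly on the three-dimensional lightlike leaf. The first step is to record the adjoint action of the boost subgroup $\SO^\circ(1,1)=\{b(t)\}$ on $\heis_5$. Writing the leaf coordinates as $(v,x_1,x_2)$, with $V$ the radical direction, and using the induced coordinates $(p,q,\tau_0,\tau_1,\tau_2)$ on $\Heis_5\subset\L_u(1,1)$ coming from (\ref{eq: unimodular group}), a direct computation shows that conjugation by $b(t)=\mathrm{diag}(1,e^t,e^{-t})$ scales the four non-central coordinates by $e^{\pm t}$ while fixing the central coordinate $\tau_0$. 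Thus $\heis_5$ splits into weight spaces for $\SO^\circ(1,1)$, the weight-zero space being exactly $Z=\R V$; in particular $\mathrm{id}-b(t)$ is invertible on $\heis_5/Z$ for every $t\neq 0$.

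Next I would use that $q\colon\L_u(1,1)\to\SO^\circ(1,1)$ is a homomorphism onto an abelian group, so $[\Gamma_1,\Gamma_1]\subseteq\Gamma\cap\Heis_5=\Gamma_0\subseteq Z$. Fix $\gamma_0\in\Gamma_1$ with $t_0:=q(\gamma_0)\neq 0$, which exists since $q(\Gamma_1)\neq 1$. Applying the commutator formula in the semidirect product $\SO^\circ(1,1)\ltimes\Heis_5$ and reducing modulo $Z$, where the Heisenberg product becomes addition and $b(t)$ acts linearly by its weights, the relation $[\gamma,\gamma_0]\equiv 0 \bmod Z$ forces, weight by weight, the non-central part of every $\gamma\in\Gamma_1$ to equal $(\mathrm{id}-b(t_\gamma))\bar w$ for one fixed vector $\bar w:=(\mathrm{id}-b(t_0))^{-1}\overline{h_{\gamma_0}}\in\heis_5/Z$ (here the invertibility of $\mathrm{id}-b(t_0)$ is exactly what makes $\bar w$ well defined). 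Lifting $\bar w$ to $w\in\Heis_5$, this says precisely that $w^{-1}\gamma w\in\SO^\circ(1,1)\ltimes Z$ for all $\gamma\in\Gamma_1$. Hence $\Gamma_1$ is conjugate into $\SO^\circ(1,1)\ltimes Z$, which is abelian because the boost fixes the center, and being discrete it is free abelian of rank at most $2$.

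Finally I would contradict cocompactness. After conjugation, $\Gamma_1\subseteq\SO^\circ(1,1)\times Z$ acts on the affine $3$-space by $(t,c)\cdot(v,x_1,x_2)=(v+c,\,e^t x_1,\,e^{-t}x_2)$, so the function $\mu=x_1x_2$ is $\Gamma_1$-invariant and is a submersion away from the line $\{x_1=x_2=0\}$, hence an open map. The Kleinian hypothesis provides a proper cocompact action of $\Gamma_1$ on a domain $\Omega$ of the affine leaf; then $\mu$ descends to the compact quotient, so $\mu(\Omega)$ is compact, while at the same time $\mu(\Omega)$ is open and non-empty in $\R$, which is impossible. This contradiction yields $\Gamma_0\not\subseteq Z$. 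The main obstacle is the middle step, namely converting the commutator relations into the clean conjugacy of $\Gamma_1$ into the abelian group $\SO^\circ(1,1)\times Z$; the weight computation and the invertibility of $\mathrm{id}-b(t)$ on $\heis_5/Z$ are what drive it, and this is exactly where the hypothesis $q(\Gamma_1)\neq 1$ is essential. A secondary point to make rigorous is the cocompactness of $\Gamma_1$ on the three-dimensional leaf, which is where the Kleinian (closed-leaf) assumption enters.
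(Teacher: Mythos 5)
Your algebraic core is correct, and it is essentially the paper's own mechanism in a slightly slicker packaging. The paper also exploits exactly the invertibility of $\mathrm{id}-b(t_0)$ on $\heis_5/Z$ (phrased there as the $\SO^{\circ}(1,1)$-action on $\Heis_5/Z$ having eigenvalues $\exp(\pm\lambda)$, $\lambda>0$): it conjugates a single element $\gamma_0$ so that its Heisenberg part becomes central, and then uses $[\Gamma,\Gamma]\subset\Gamma_0\subset Z$ plus a centralizer computation modulo $Z$ to trap the whole group; your simultaneous conjugation of $\Gamma_1$ by the one vector $\bar w=(\mathrm{id}-b(t_0))^{-1}\overline{h_{\gamma_0}}$ achieves the same thing in one stroke. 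The endgame (an invariant open function that must descend to a compact quotient) is also the paper's endgame.

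The genuine gap is \emph{where} you run that endgame. Your contradiction needs $\Gamma_1$ to act properly cocompactly on a domain of the three-dimensional leaf, and you yourself flag this as resting on a ``closed-leaf'' assumption. But the lemma carries no such assumption: in the notation fixed just before it, $\Gamma$ is the holonomy of the Kleinian compact four-manifold $M$, $\Gamma_1=\Gamma\cap\L_u(1,1)$, and the paper invokes the lemma precisely in the dense-leaf situation (proof of Remark \ref{remark: dimension S_0}, where $q(\Gamma_1)\cong\Z$ while $p_1(\Gamma)$ is dense). When the $\FF_M$-leaves are dense, $\Gamma_1$ does \emph{not} act cocompactly on $\Omega\cap\F$ --- it can be, say, a rank-two group of translations acting on an entire leaf --- so your final step collapses in exactly one of the two cases the lemma must cover. (Your proof does suffice for the other use of the lemma, inside Proposition \ref{Prop: Kleinian Markus}, where $\Gamma_1$ genuinely is the holonomy of a compact Kleinian $(\L_u(1,1),\R^3)$-manifold.) The paper's proof avoids the issue by staying global: the same commutator computation applies to all of $\Gamma$, not just $\Gamma_1$ (only the invertibility of $\mathrm{id}-b(t_0)$ is needed, and the $\R$-factor also fixes $Z$), giving after conjugation $\Gamma\subset(\R\times\SO^{\circ}(1,1))\times Z$; then the $\Gamma$-invariant map $x^2-y^2\colon\bX\to\R$, precomposed with $D$, descends to the compact four-manifold $M$ and has open image since $D(\M)$ is open in $\bX$ --- a contradiction with compactness of $M$. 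Your argument becomes a complete proof once you make these two replacements: conjugate all of $\Gamma$ rather than only $\Gamma_1$, and trade the leafwise function $x_1x_2$ for the global invariant function $x^2-y^2$, so that the compactness you use is that of $M$ itself rather than that of a leaf quotient.
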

 \begin{proof}
    The model space $\bX$ has global coordinates of the form $(v,x,y,u)$, for which the $\SO^{\circ}(1,1)$-action is by linear Lorentz transformation on the $(x,y)$-component (see \cite[Proposition 3.9]{kath2024pseudo}). Assume that $\Gamma_0\subset Z$, then up to conjugacy by an element of $\L_u(1,1)$ we can assume that $\Gamma \subset (\R\times \SO^{\circ}(1,1)) \times Z$. Indeed, we denote elements of $G_\bX$ by $(s,t,a,z)$ where $(s,t)\in \R\times\SO^{\circ}(1,1)$ and $(a,z)\in \Heis_5$. Since $q(\Gamma_1)$ is non-trivial, we can find an element of the form $\gamma_0:=(0,t_0,a_0,z_0)$ where $t_0\neq 0$. For $\alpha=(0,0,a_1,0) \in \Heis_5$, we get the following equation \begin{equation}\label{eq: conjuagcy}
        (0,t_0,a_1+a_0-t_0\cdot a_1,*).
    \end{equation} Since the induced action of $\SO^{\circ}(1,1)$ on $\Heis_5/Z$ has eigenvalues $\exp(\pm \lambda)$ for $\lambda>0$, we can solve Equation (\ref{eq: conjuagcy}) by choosing $a_1$ so that  $a_1-t_0\cdot a_1+a_0=0$. So, by conjugating $\Gamma$ if necessary we may assume that $\gamma_1=(0,t_0,0,z')$. Moreover, the assumption that $\Gamma_0\subset Z$ implies that $\widetilde{\psi}(\Gamma)$ is abelian. In particular, $\widetilde{\psi}(\Gamma)$ is included in the centralizer of $\widetilde{\psi}(\gamma_1)=(0,t_0,0)$ in $G_\bX/Z$. A simple computation shows that the latter (for $t_0\neq 0$!) is exactly $\R\times \SO^{\circ}(1,1)$. Thus, $\Gamma\subset \R\times \SO^{\circ}(1,1)\times Z$. 
    
    Now consider the $\Gamma$-invariant map $\M\xrightarrow[]{D} \bX\xrightarrow[]{x^2-y^2} \R$, thus it is well defined on $\pi_1(M)\backslash\M=M$. However, this map has an open image, since $D(\M)$ is an open domain of $\bX$, which is a contradiction with the compactness of $M$. 
 \end{proof}

\begin{lemma}[Theorem 1.13 \cite{raghunathan1972discrete}]\label{Lemma: discrete projection}
    Let $G$ be a Lie group and let $\Gamma$ be a cocompact lattice of $G$. Let $H$ be a normal subgroup of $G$ such that $H\cap \Gamma$ is a cocompact lattice of $H$. Then $p(\Gamma)$ is a cocompact lattice of $G/H$, where $p:G\to G/H$ is the natural projection.
\end{lemma}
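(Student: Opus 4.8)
The plan is to verify the two defining properties of a uniform lattice separately: that $p(\Gamma)$ is discrete in $G/H$, and that the quotient $(G/H)/p(\Gamma)$ is compact. As an abstract group $p(\Gamma)\cong \Gamma/(\Gamma\cap H)$, so the content is entirely topological.

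I would dispatch cocompactness first, since it is essentially formal. Consider the continuous surjection $G \to (G/H)/p(\Gamma)$ obtained by composing the two quotient projections, $g\mapsto p(g)\mapsto p(g)\,p(\Gamma)$. If $\gamma\in\Gamma$ then $p(g\gamma)\,p(\Gamma)=p(g)\,p(\Gamma)$, so this map factors through $G/\Gamma$, giving a continuous surjection $G/\Gamma \twoheadrightarrow (G/H)/p(\Gamma)$. Since $\Gamma$ is cocompact, $G/\Gamma$ is compact, and hence so is its continuous image $(G/H)/p(\Gamma)$.

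The hard part is discreteness, and this is exactly where the hypothesis that $\Gamma\cap H$ be cocompact in $H$ is used. I would argue by contradiction: if $p(\Gamma)$ were not discrete, then (working in the first-countable Lie group $G/H$) there would be a sequence $\gamma_n\in\Gamma\setminus H$ with $p(\gamma_n)\to e$, which lets me write $\gamma_n=v_n h_n$ with $v_n\to e$ in $G$ and $h_n\in H$. Now I invoke the cocompactness of $\Gamma\cap H$ in $H$ to fix a compact set $K\subset H$ with $H=K\,(\Gamma\cap H)$, and factor $h_n=k_n\mu_n$ with $k_n\in K$ and $\mu_n\in\Gamma\cap H$. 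Then $\gamma_n\mu_n^{-1}=v_n k_n$; passing to a subsequence so that $k_n\to k\in K$, and using $v_n\to e$, the left-hand side is a sequence in $\Gamma$ converging to $k\in H$. Since a discrete subgroup of a Lie group is closed, the limit $k$ lies in $\Gamma\cap H$ and the sequence $\gamma_n\mu_n^{-1}$ is eventually constant equal to $k$. But then $\gamma_n=k\mu_n\in\Gamma\cap H\subset H$ for large $n$, contradicting $\gamma_n\notin H$. Hence $p(\Gamma)$ is discrete.

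The only genuine subtlety is the interplay in the discreteness step between the ``shrinking'' factor $v_n\to e$, the compact reservoir $K$ furnished by cocompactness of $\Gamma\cap H$, and the rigidity of the discrete group $\Gamma$ (a convergent sequence in $\Gamma$ is eventually constant). Once discreteness and cocompactness are both in hand, $p(\Gamma)$ is by definition a cocompact lattice of $G/H$, completing the proof.
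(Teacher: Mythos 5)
The paper offers no proof of this lemma at all: it is quoted verbatim as Theorem~1.13 of Raghunathan's book \cite{raghunathan1972discrete}, so there is nothing internal to compare against. Your argument is a correct, self-contained proof of the uniform case, and it is more elementary than the textbook route. Cocompactness via the factored surjection $G/\Gamma \twoheadrightarrow (G/H)/p(\Gamma)$ is exactly right, and the discreteness argument is sound: the lift $\gamma_n = v_n h_n$ with $v_n \to e$ is legitimate because $p$ is an open map (equivalently, because $G \to G/H$ admits continuous local sections once $H$ is closed --- note that $H$ closed is implicit in the statement, since $H \cap \Gamma$ is required to be a lattice of $H$), the compact set $K$ with $H = K(\Gamma \cap H)$ exists by cocompactness, and the chain $\gamma_n \mu_n^{-1} = v_n k_n \to k$, closedness of the discrete subgroup $\Gamma$, and eventual constancy yield the contradiction $\gamma_n \in H$. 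The main difference from Raghunathan's proof is that his Theorem~1.13 is stated for general (not necessarily uniform) lattices: there, discreteness of $p(\Gamma)$ is deduced from $\Gamma \cap H$ merely being a lattice in $H$, using the finite $H$-invariant measure on $H/(\Gamma \cap H)$ and the closedness of $H\Gamma$, which requires genuinely measure-theoretic input. Your proof trades that generality for transparency: the hypothesis actually assumed in the paper (cocompactness of $\Gamma \cap H$ in $H$) hands you a compact fundamental set $K$, and everything then follows from point-set topology of Lie groups. For the way the lemma is used in the paper (Lemma~\ref{Lemma: non-discrete p2 projection} and Remark~\ref{remark: lattice G_X}, always in the cocompact setting), your version fully suffices.
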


The goal of the rest of this section is to show completeness of the lightlike leaves. As explained in Paragraph \ref{para: space of leaves str}, the leaves of $\FF_M$ are defined by a non-singular one form $\eta$. Therefore, the leaves are either all closed or all dense.

\subsubsection{Closed lightlike leaves}\label{Subsec: closed leaves}
By Proposition \ref{Prop: space of F-leaves} we see that $D$ induces a covering map from $\F_\M$ to $D(\F_\M)=\Omega \cap \F$ where $\F$ is the $\FF$-leaf containing $D(\F_\M)$. Hence, the leaf $\F_M$ has, in fact, a Kleinian structure for the affine geometry $(\L_u(1,1), \R^3)$. So when the lightlike leaves are closed, it is sufficient to show that compact Kleinian $(\L_u(1,1), \R^3)$-structures are complete, to deduce the completeness of Kleinian $(G_\bX,\bX)$-structures. We state 

\begin{pr}\label{Prop: Kleinian Markus}
   Kleinian  compact $(\L_u(1,1),\R^3)$-manifolds are complete.
\end{pr}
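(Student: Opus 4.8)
The plan is to exploit that the structural group $\L_u(1,1)$ is solvable and simply connected while its linear part lies in $\SL_3(\R)$ (the affine action is volume-preserving), and to manufacture from $\Gamma$ a connected solvable Lie group acting \emph{simply transitively} on all of $\R^3$, which forces the developing image to be everything. Throughout write $M=\Gamma\backslash\Omega$ with $\Gamma\subset\L_u(1,1)$ discrete, acting properly, freely and cocompactly on the open connected set $\Omega\subset\R^3$, in coordinates $(v,x,y)$ for which $\L_u(1,1)\cong\SO^{\circ}(1,1)\ltimes\Heis_5$ acts as in (\ref{eq: unimodular group}); here $Z=\R\partial_v$ is the centre and spans the parallel lightlike direction $W$. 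First I would record the reduction coming from $W$: being $\L_u(1,1)$-invariant it descends to a parallel field on the compact $M$, whose flow is therefore complete, and developing this flow shows that $\Omega$ is saturated by the $v$-translation lines, so $\Omega=\pi^{-1}(U)$ for an open $U\subset\R^2$ with $\pi(v,x,y)=(x,y)$. This is Fact \ref{Fact: structure of the universal cover} applied to the closed non-singular one-form dual to $W$, and it reduces completeness to $U=\R^2$.

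Let $q\colon\L_u(1,1)\to\SO^{\circ}(1,1)$ be the boost projection. If $q(\Gamma)$ is trivial, then $\Gamma\subset\Heis_5$ has unipotent linear holonomy and $M$ is complete by Fried's theorem on nilpotent holonomy \cite{fried1981affine}; so I may assume $q(\Gamma)\neq\{1\}$, i.e. $\Gamma$ contains a hyperbolic element. In that situation Lemma \ref{Lemma: center transversal} gives $\Gamma\cap\Heis_5\nsubseteq Z$, which excludes the degenerate case where $\Gamma$ would be conjugate into $\R\times\SO^{\circ}(1,1)\times Z$ and the Lorentz function $x^2-y^2$ would descend to $M$ with open image, contradicting compactness. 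This guarantees that the solvable hull constructed next is genuinely transverse to the centre.

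Next I would invoke the syndetic hull. As a finitely generated discrete solvable subgroup of the triangular group $\L_u(1,1)$, the group $\Gamma$ is polycyclic, so by the Fried--Goldman construction it sits as a cocompact lattice in a connected solvable Lie subgroup $S$ that still acts properly on $\Omega$ with compact quotient; Raghunathan's Lemma \ref{Lemma: discrete projection} is used to track, through the central extension by $Z$, that the relevant intersections stay cocompact, so that a dimension count yields $\dim S=3=\dim\Omega$. Because $\L_u(1,1)$ is simply connected and solvable, $S$ has no nontrivial compact subgroup, hence the proper action has trivial stabilisers; its orbits are then open, and since $\Omega$ is connected there is a single orbit. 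Thus $S$ acts simply transitively on $\Omega$ and the orbit map $S\to\Omega$ is a diffeomorphism.

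The heart of the matter, and the main obstacle, is to upgrade this simple transitivity from $\Omega$ to all of $\R^3$. Since $S$ carries the cocompact lattice $\Gamma$ it is unimodular, $\tr(\ad_x)=0$ for all $x\in\Lie(S)$; and since the linear parts of $\L_u(1,1)$ have determinant $e^{t}e^{-t}=1$, the affine action of $S$ preserves Lebesgue measure, i.e. $\tr(L_x)=0$ for the linear part $L_x$ of the affine vector field of $x$. Helmstetter's completeness criterion for left-invariant affine structures asserts completeness precisely when $\tr(L_x)=\tr(\ad_x)$; here both sides vanish, so the developing map of this structure, which is exactly the orbit map $S\to\R^3$, is onto, giving $\Omega=S\cdot o=\R^3$. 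The reason this last step is delicate is precisely the feature highlighted after Theorem \ref{Markus3d}: the linear holonomy has discompacity $2$ and is neither distal nor unipotent, so neither Carrière's argument \cite{carriere1989autour} nor Fried's distality theorem \cite{fried1986distality} prevents an $S$-orbit from escaping along the two opposite null directions; what rescues completeness is the interplay of unimodularity of the hull with the volume-preserving nature of the action, which is exactly what the Kleinian (divisibility) hypothesis makes available.
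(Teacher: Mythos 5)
Your opening reductions (saturation of $\Omega$ by the $v$-lines, and the unipotent case via Fried) are fine and agree with the paper, and your endgame is a real mechanism: for a \emph{left-invariant} affine structure, unimodularity of the group plus volume-preservation of the action forces completeness (Helmstetter's trace criterion $\tr(L_x)=\tr(\ad_x)$; this is the same tool the paper itself uses, as Goldman--Hirsch's Corollary on p.~186, when it \emph{constructs} examples in Section 6.2). The fatal gap is upstream of that: nothing makes the syndetic hull $S$ act on $\Omega$ at all. Saito's theorem (the one applicable here, since $\L_u(1,1)$ is completely solvable) produces $S\supset\Gamma$ with $S/\Gamma$ compact as a purely group-theoretic object; there is no reason why $S$ should preserve the $\Gamma$-invariant open set $\Omega$, nor why its action on $\Omega$, even if $\Omega$ were invariant, should be proper --- properness of a discrete subgroup never passes to the ambient connected group. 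The Fried--Goldman \emph{crystallographic} hull theorem, which does yield a hull acting simply transitively, takes completeness ($\Omega=\R^3$) as a hypothesis, so citing it is circular: producing a connected group acting simply transitively on $\Omega$ is essentially equivalent to the statement you are trying to prove. This is precisely why the Kleinian hypothesis does not trivialize Markus-type questions, and why the paper instead runs a case analysis on the projection $q(\Gamma_1)\subset\SO^{\circ}(1,1)$ (trivial, dense, or infinite cyclic), handling each case by a different geometric or group-theoretic argument.

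The secondary gap is the ``dimension count'' $\dim S=3$. One has $\dim S=$ Hirsch rank of $\Gamma$, and a priori this need not equal $3$: it can be $4$, e.g.\ $\Gamma\cong\Z\ltimes\Gamma_0$ with $\Gamma_0$ a lattice in a copy of $\Heis_3\subset\Heis_5$ and hyperbolic $\Z$-action, in which case a $4$-dimensional hull cannot act simply transitively on a $3$-dimensional domain; and it could a priori be $\le 2$, which must be excluded by a compactness argument (this is what Lemma \ref{Lemma: center transversal} and its use are for). The paper is forced to confront exactly these configurations: the non-abelian $\Gamma_0$ case is settled not by any hull, but by observing that the center gives a free $\mS^1$-action on $\Gamma_1\backslash U$, exhibiting it as a circle bundle over a compact flat Lorentz surface whose completeness yields $U=\R^3$; the rank-$2$ abelian case invokes Goldman--Hirsch's parallel-volume theorem; and the rank-$3$ abelian case is killed by a cohomological-dimension contradiction combined with an analysis of $\Aff(\R)$-invariant domains in $\Mink^{1,1}$ (Lemma \ref{Obser: aff invariant subsets}). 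None of these cases is visible in your proposal; the hull argument silently assumes them away, so the proof as written does not go through.
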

For the proof of the above proposition we will need the following lemma on subgroups of $\Sol$.
\begin{lemma}\label{Lemma: Sol subgroups}
    Let $\Lambda$ be a non-abelian subgroup of $\Sol\cong \SO^{\circ}(1,1)\ltimes \R^2$. Then either $\Lambda$ is a cocompact lattice, hence, isomorphic to $\Z\ltimes \Z^2$ or $\Lambda_0:=\Lambda\cap \R^2$ has an accumulation point. In the latter case, either $\Lambda_0$ is dense in the translations or  $\Lambda$ is a subgroup of a copy of $\Aff(\R)$.
\end{lemma}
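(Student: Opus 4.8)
The plan is to analyze $\Lambda$ through the projection $q:\Sol\to\SO^{\circ}(1,1)\cong\R$ together with the normal translation subgroup $\R^2$, working in a null basis $e_1,e_2$ in which $\SO^{\circ}(1,1)$ acts diagonally as $A_t=\mathrm{diag}(e^{t},e^{-t})$. First I would record the structural facts that drive everything. Since $\Lambda$ is non-abelian, neither $q(\Lambda)$ nor $\Lambda_0:=\Lambda\cap\R^2=\ker(q|_\Lambda)$ can be trivial, as triviality of either makes $\Lambda$ abelian. More importantly, a one-line computation shows that conjugation by $(t,w)\in\Lambda$ acts on $\Lambda_0$ by the linear map $A_t$; hence, fixing some $(t_0,w)\in\Lambda$ with $t_0\neq0$, both $\Lambda_0$ and its closure $\overline{\Lambda_0}$ are invariant under the hyperbolic map $A_{t_0}$. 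This $A_{t_0}$-invariance is the main lever in every case.

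Next I would split according to whether $\Lambda_0$ is discrete. If it is, then $\Lambda_0$ is a lattice of rank $0$, $1$, or $2$ in $\R^2$. Rank $0$ is excluded (abelian). Rank $1$ is excluded because a hyperbolic $A_{t_0}$ preserving $\Z v$ must satisfy $A_{t_0}v=\pm v$, impossible since the eigenvalues $e^{\pm t_0}$ are never $\pm1$ for $t_0\neq0$. So $\Lambda_0\cong\Z^2$ and $A_{t_0}$ is a hyperbolic automorphism of this lattice. As the automorphism group of a rank-$2$ lattice is discrete in $\GL_2(\R)$, no sequence $A_{t_n}\to I$ with $t_n\neq0$ can preserve $\Lambda_0$, which forces $q(\Lambda)$ to be discrete, i.e. $\cong\Z$. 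The extension $1\to\Z^2\to\Lambda\to\Z\to1$ then splits, is discrete, and is cocompact in $\Sol$, giving $\Lambda\cong\Z\ltimes\Z^2$, the first alternative.

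If $\Lambda_0$ is not discrete it has an accumulation point, so $\overline{\Lambda_0}$ contains a line, and its identity component $\ell$ is an $A_{t_0}$-invariant subspace. If $\ell=\R^2$ then $\Lambda_0$ is dense in the translations, one of the remaining alternatives. Otherwise $\ell$ is one-dimensional and $A_{t_0}$-invariant, hence a null axis. Here I would rule out $\overline{\Lambda_0}=\ell\oplus\Z w$: projecting to $\R^2/\ell$, the transverse eigenvalue of $A_{t_0}$ is $e^{\mp t_0}$, and preserving a rank-one lattice forces $e^{\mp t_0}=\pm1$, i.e. $t_0=0$, a contradiction. Thus $\overline{\Lambda_0}=\ell$, and in particular $\Lambda_0\subset\ell$.

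The final and most delicate step upgrades $\Lambda_0\subset\ell$ to the conclusion that all of $\Lambda$ lies in a copy of $\Aff(\R)$. Here I would use that $\ell$ is normal in $\Sol$ with $\pi:\Sol\to\Sol/\ell\cong\Aff(\R)$ acting on a line. Because $\Lambda_0\subset\ell$, the image $\pi(\Lambda)$ meets the translation subgroup of $\Aff(\R)$ trivially, so every non-trivial element of $\pi(\Lambda)$ is a genuine affinity with a unique fixed point. Since every commutator in $\Aff(\R)$ is a translation and $\pi(\Lambda)$ contains no non-trivial translation, $\pi(\Lambda)$ is abelian, and commuting affinities share their fixed point; hence all elements of $\pi(\Lambda)$ fix a common $x_0$. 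Therefore $\pi(\Lambda)\subset\Stab(x_0)$, whose preimage $S_{x_0}$ in $\Sol$ is the conjugate of $\R\ltimes\ell$ by a transverse translation, a copy of $\Aff(\R)$, and $\Lambda\subset S_{x_0}$. I expect this passage, together with the exclusion of the line-plus-lattice closure, to be the main obstacle: one must verify carefully that the common fixed point exists via the commutator/translation dichotomy in $\Aff(\R)$, and that the stabilizer's preimage is genuinely a conjugate of the standard $\Aff(\R)$.
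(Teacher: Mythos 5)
Your proof is correct, and it actually does more than the paper's. On the discrete case your argument coincides with the paper's proof: rank $1$ for $\Lambda_0$ is impossible because the hyperbolic element $A_{t_0}$ obtained from non-abelianness would have to preserve $\Z v$ (the paper phrases this as the action being contracting/expanding while $\Aut(\Z)$ is finite), and rank $2$ forces the projection to $\SO^{\circ}(1,1)$ to be discrete, giving the cocompact lattice $\Z\ltimes\Z^2$. Where you genuinely diverge is the non-discrete case: the paper disposes of it with the single sentence ``If $\Lambda_0$ is non-discrete, we are done,'' i.e.\ it only records that $\Lambda_0$ has an accumulation point and never proves the finer dichotomy (density in the translations versus $\Lambda$ lying in a copy of $\Aff(\R)$), even though that finer statement is exactly what gets invoked later in the proof of Proposition \ref{Prop: Kleinian Markus}, where the closure $H$ must be shown to be either $\R^2$ or a lightlike line and where Lemma \ref{Obser: aff invariant subsets} requires $J\subset\Aff(\R)$. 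Your proposal supplies precisely this missing content: $A_{t_0}$-invariance of $\overline{\Lambda_0}$ pins its identity component to a null eigenline $\ell$, the intermediate possibility $\overline{\Lambda_0}=\ell\oplus\Z w$ dies on the transverse eigenvalue $e^{\mp t_0}\neq\pm1$, and the passage to $\Sol/\ell\cong\Aff(\R)$ with the commutator/fixed-point argument (no non-trivial translations in $\pi(\Lambda)$, hence abelian, hence a common fixed point $x_0$) places $\Lambda$ inside $\pi^{-1}(\Stab(x_0))$, a translation-conjugate of $\R\ltimes\ell\cong\Aff(\R)$. So your route is not merely equivalent to the paper's; it is a complete version of an argument the paper only gestures at, and it is what is needed to justify the lemma as stated.
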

\begin{proof}
    We consider the projection of $\Lambda$ to $\SO^{\circ}(1,1)$, we call this projection $\Lambda'$. The group $\Lambda_0:=\Lambda\cap \R^2$ is non-trivial since $\Lambda$ is not abelian. If $\Lambda_0$ is non-discrete, we are done. If not, its rank must be $2$. Indeed, assume $\Lambda_0\cong \Z$, however, $\Lambda'$ acts on $\Lambda_0$, and its action is necessarily contracting or expanding which is a contradiction, since $\Aut(\Z)$ is finite. So, $\Lambda_0$ is of rank $2$. Thus $\Lambda_0$ is a lattice of $\R^2$, so the projection $\Lambda'$ must be discrete, i.e. $\Lambda\cong \Z\ltimes \Z^2$.
\end{proof}
The next fact will be an important tool and will be used frequently in what follows. Before we state it, recall that a non-trivial homothety of  Heisenberg is an automorphism $\Psi_{\lambda}: \Heis_{2n+1}\to \Heis_{2n+1}, (\xi, z)\mapsto (\lambda\xi, \lambda^2 z)$ where $\lambda\in \R^*\setminus \{1\}$.
\begin{fact}\label{Fact: strong Zassenhaus}
    Let $G:=A\ltimes_{\theta} \Heis_{2n+1}$. Assume that $\theta$ commutes with a non-trivial homothety $\Psi_{\lambda}$ of Heisenberg. Denote by $p: G\to A$. Now, let $\Gamma$ be a finitely generated discrete subgroup of $G$, and let $H$ be the closure of $p(\Gamma)$ in $A$. Define the non-discrete part of $\Gamma$ to be $\Gamma_{nd}:= \Gamma\cap p^{-1}(H^o)$. Then $\Gamma_{nd}$ is a cocompact lattice in a connected, closed nilpotent subgroup $N$ of $G$.
\end{fact}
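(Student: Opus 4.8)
The plan is to realise $\Gamma_{nd}$ as a discrete nilpotent group and exhibit its syndetic hull inside $G$, with the commuting homothety serving as the device that controls the Heisenberg directions. The first observation is that the hypothesis $\Psi_\lambda\circ\theta(a)=\theta(a)\circ\Psi_\lambda$ makes $\widehat\Psi_\lambda\colon(a,h)\mapsto(a,\Psi_\lambda(h))$ a genuine automorphism of $G$ which fixes $A$, satisfies $p\circ\widehat\Psi_\lambda=p$, and (for $\lambda<1$) contracts the Heisenberg fibre to the identity. On the group-theoretic side, $H=\overline{p(\Gamma)}$ is closed, $H^o$ is open in $H$, and $\Gamma_{nd}$ is normal in $\Gamma$ with $\Gamma/\Gamma_{nd}\hookrightarrow H/H^o$; moreover, since $p(\Gamma)$ is dense in $H$ and $H^o$ is open, $p(\Gamma_{nd})=p(\Gamma)\cap H^o$ is \emph{dense} in the connected group $H^o$. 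This density is what converts discreteness of $\Gamma$ into rigidity.

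Next I would analyse the purely Heisenberg part $\Gamma_0:=\Gamma\cap\Heis_{2n+1}=\Gamma_{nd}\cap\ker p$. Being a discrete subgroup of the simply connected nilpotent group $\Heis_{2n+1}$, it is finitely generated, torsion free, and a cocompact lattice in a unique connected closed subgroup $N_0\le\Heis_{2n+1}$ (its Mal'cev hull); since $\Gamma_0$ is normal in $\Gamma_{nd}$, uniqueness forces $\Gamma_{nd}$, and then by density $H^o$, to normalise $N_0$. Passing to the abelianisation $Q:=\Heis_{2n+1}/Z\cong\R^{2n}$, on which inner automorphisms act trivially and each $\theta(a)$ acts linearly, the conjugation action of $p(\Gamma_{nd})$ preserves the lattice $\overline{\Gamma_0}$ inside the subspace $\overline{N_0}\subset Q$. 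Hence $\theta(p(\Gamma_{nd}))|_{\overline{N_0}}$ lies in the discrete group of lattice automorphisms $\operatorname{GL}(\overline{N_0};\overline{\Gamma_0})$; combined with the density of $p(\Gamma_{nd})$ in $H^o$ and continuity of $a\mapsto\theta(a)|_{\overline{N_0}}$, this forces $\theta(H^o)$ to act \emph{trivially} on $\overline{N_0}$.

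The heart of the argument, and the step I expect to be the main obstacle, is to promote this to unipotency of the action on the whole subspace \emph{spanned by the Heisenberg parts of $\Gamma_{nd}$}: writing $\gamma=(a,h)\in\Gamma_{nd}$, the vector $\bar h\in Q$ must have no component in an eigenspace of $\theta(a)$ with eigenvalue off the unit circle. This is exactly where the homothety is indispensable. Heuristically, a ``hyperbolic'' Heisenberg component, together with elements whose $A$-part is near the identity (available by density) but whose Heisenberg part is large (forced by discreteness), can be renormalised by a suitable power $\widehat\Psi_{\lambda}^{\,m}$ so that the Zassenhaus lemma applies at the correct scale; unipotency then follows because any non-unipotent behaviour would make the renormalised elements accumulate, contradicting discreteness of $\Gamma$. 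Making precise the choice $m=m(j)$ of renormalising powers, so that the contracted elements converge to a nontrivial limit while their $A$-parts converge to the identity, is the delicate point.

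Nilpotency of $\Gamma_{nd}$ is then immediate from a short commutator computation in $\overline G=G/Z=A\ltimes Q$ (here $A\cong\R\times\SO^{\circ}(1,1)$ is abelian, as in all cases of interest): for $\gamma_i=(a_i,v_i)$ the $A$-part of $[\gamma_1,\gamma_2]$ is trivial, so $[\gamma_1,\gamma_2]\in\Gamma_0$ and its $Q$-component $(\mathrm{id}-\theta(a_2))v_1-(\mathrm{id}-\theta(a_1))v_2$ lies in $\overline{N_0}$; a further commutator against any $(a_3,v_3)$ then vanishes because $\theta(a_3)$ fixes $\overline{N_0}$. Hence $\overline{\Gamma_{nd}}$ is $2$-step nilpotent and, lifting through the central extension by $Z$, $\Gamma_{nd}$ is nilpotent. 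Being a discrete subgroup of the simply connected solvable group $G$, $\Gamma$ is polycyclic, so $\Gamma_{nd}$ is finitely generated. It remains to build the hull: using the unipotency above I would take $N=\exp(\n)$ for $\n$ the nilpotent Lie subalgebra of $\Lie(G)$ generated by the logarithms of a generating set of $\Gamma_{nd}$ --- unipotency guarantees $\n$ is nilpotent and the homothety grading guarantees $N$ is closed in $G$ --- and verify cocompactness by matching the cocompact lattice $\Gamma_0\subset N_0$ against the dense projection $p(\Gamma_{nd})\subset H^o$, which between them exhaust the directions of $N$. This produces the required connected, closed, nilpotent $N$ containing $\Gamma_{nd}$ cocompactly.
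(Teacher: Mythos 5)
The paper does not prove this Fact at all: it is imported, with the proof cited from \cite[Theorem 4.1 (i) + Fact 5.3]{hanounah2025topology} and the underlying mechanism attributed to Carri\`ere--Dal'bo \cite{carriere1989generalisations}. That mechanism is short and avoids almost all of your machinery: since $p(\Gamma_{nd})=p(\Gamma)\cap H^o$ is dense in the connected group $H^o$, for \emph{any} neighborhood $U$ of $1$ in $A$ the set $S_U=\{\gamma\in\Gamma_{nd}\,:\,p(\gamma)\in U\}$ generates $\Gamma_{nd}$ (given $\gamma\in\Gamma_{nd}$, choose $\delta\in\langle S_U\rangle$ with $p(\delta)$ close to $p(\gamma)$; then $\gamma\delta^{-1}\in S_U$). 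One then takes finitely many elements of $S_U$ generating $\Gamma_{nd}$, and applies a \emph{single} power $\widehat{\Psi}_\lambda^{\,m}$ --- an automorphism of $G$ fixing the $A$-parts and contracting the Heisenberg parts --- to push all of them into a Zassenhaus neighborhood $\Omega$ of $G$. Since $\widehat{\Psi}_\lambda^{\,m}(\Gamma_{nd})$ is still discrete and generated by elements of $\Omega$, the Zassenhaus lemma places it inside a \emph{connected} nilpotent Lie subgroup of $G$; pulling back by the automorphism, taking the closure, and applying Malcev theory inside that nilpotent group yields the closed connected hull $N$ and cocompactness of $\Gamma_{nd}$ in it. No eigenvalue analysis, no unipotency statement, and no study of the action on the hull of $\Gamma_0$ is needed.

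Measured against this, your proposal has a genuine gap, and you name it yourself: the ``heart of the argument'' --- the renormalization making ``the Zassenhaus lemma apply at the correct scale,'' with ``the choice $m=m(j)$ of renormalising powers'' left imprecise --- is not a supporting step, it \emph{is} the proof; what is missing is exactly the generation-by-$S_U$ observation above, which lets one use one fixed power of $\widehat{\Psi}_\lambda$ on finitely many generators instead of element-dependent powers. Moreover, the scaffolding you build around it is partly broken. Your deduction that $\theta(H^o)$ acts trivially on $\overline{N_0}$ rests on $\overline{\Gamma_0}$ being a lattice in $\overline{N_0}\subset Q$, so that $\GL(\overline{N_0};\overline{\Gamma_0})$ is discrete; but the image of $\Gamma_0$ in $Q=\Heis_{2n+1}/Z$ need not be discrete. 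For instance, for the abelian discrete subgroup $\Gamma_0=\exp\bigl(\Z X+\Z(\sqrt{2}X+Z)\bigr)\subset \Heis_3$, whose Malcev hull is $\exp(\operatorname{span}(X,Z))$, one has $\Gamma_0\cap Z=\{1\}$ and $\overline{\Gamma_0}=(\Z+\sqrt{2}\Z)X$ dense in a line; stabilizers of such non-closed subgroups need not be discrete in general (compare $\GL_2(\Z[\sqrt{2}])\subset\GL_2(\R)$ preserving $\Z[\sqrt{2}]^2$), so the ``discrete group of lattice automorphisms'' step is unjustified. Finally, even granting nilpotency of $\Gamma_{nd}$ (your commutator computation also silently assumes $A$ abelian, which the Fact does not), your construction of the hull --- ``take $N=\exp(\mathfrak{n})$ \ldots verify cocompactness by matching \ldots which between them exhaust the directions of $N$'' --- is an assertion, not an argument; and you cannot fall back on standard syndetic-hull theorems such as Saito's, because in the very situation where the paper invokes this Fact the ambient group $G_\E$ is not completely solvable, which is precisely why the homothety--Zassenhaus route is taken. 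In short: you identified the right ingredients (the automorphism $\widehat{\Psi}_\lambda$, density of $p(\Gamma_{nd})$ in $H^o$, the Zassenhaus lemma), but the one step that combines them is missing, and the substitute arguments do not close it.
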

A proof of this fact can be found in the proof of \cite[Theorem 4.1 (i) + Fact 5.3]{hanounah2025topology}, there $H^o$ is denoted by $\Lambda_0$ and the non-discrete part of $\Gamma$ is denoted by $\Gamma^0$. The idea of the proof can also be found in the paper \cite{carriere1989generalisations}.

\begin{proof}[Proof of Proposition \ref{Prop: Kleinian Markus}]
    Let $N$ be such a manifold. Let $(D,\rho)$ be a developing pair for this structure. We denote by $U$ the image of $D$ and $\Gamma_1:=\rho(\pi_1(N))$. Recall the projection $q: \L_u(1,1)\to \SO^{\circ}(1,1)$. We will discuss accordingly the projection $q(\Gamma_1)$.\\

     \textbf{Unipotent holonomy.} In other words, $q(\Gamma_1)$ is trivial. More precisely, it will be reduced to an $(\L_u(2),\R^3)$-structure (in the sense of Section \ref{Section: geo of models}) which is complete due to Section \ref{Section: completeness transvections}, Fact \ref{Fact: Lu structure} or by \cite[Theorem A]{fried1981affine}. Observe that this argument works even in the non-Kleinian case.\\
    
\textbf{Dense hyperbolic parts.} In this case, $q(\Gamma_1)$ is dense. Observe that $\Gamma_1$ is finitely generated, hence, we apply Fact \ref{Fact: strong Zassenhaus} to get that $\Gamma_1$ has a connected nilpotent syndetic hull $L$. Since $q(L)$ is non-trivial, the intersection $L\cap \Heis_5$ must be central, otherwise we would get non-nilpotent subgroups. Therefore, $\Gamma_0:=\Gamma_1\cap \Heis_5\subset Z$ which is a contradiction by Lemma \ref{Lemma: center transversal}.\\

\textbf{Discrete hyperbolic parts.} In this case $q(\Gamma_1)$ is discrete isomorphic to $\Z$. So we can write $\Gamma_1\cong \Z\ltimes \Gamma_0$. Now we discuss the structure of $\Gamma_0$.
\begin{itemize}
    \item \textbf{Non-abelian $\Gamma_0$.} Here in particular, all $\V$-leaves are closed. Observe that the $\V$-leaves are given by the $\R$-action of the $V$-flow which is just the action center of $\Heis_5$. Thus, the $\R$-action commutes with $\Gamma_1$, so it is well defined on $\Gamma_1\backslash U$. However, since $\Gamma_1\cap \R\neq\{0\}$ the $\R$-action induces a (free) $\mathbb{S}^1$-action on the quotient $\Gamma_1\backslash U$.  Hence, $\Gamma_1\backslash U$ is an $\mS^1$-fiber bundle over a compact flat Lorentz surface. However, the latter is necessarily complete. Since $U$ is $\V$-saturated and its projection modulo $\V$ is complete, we deduce that $U=\R^3$.
    \item \textbf{Abelian $\Gamma_0$.} Discrete abelian subgroups of $\Heis_5$ are of rank $\leq 3$. Moreover, since $\Gamma_0$ cannot be central, $\Gamma_0$ is of rank at least $2$. Furthermore we can assume that $\Gamma_0 \cap Z$ is trivial, otherwise, we have the same conclusion as in non-abelian case. If $\Gamma_0$ is of rank $2$, we conclude that $\Gamma_1$ is of rank $3$. By \cite[3.5. Theorem] {goldman1986affine} we conclude the completeness of $N$ due to the fact that $N$ has parallel volume.\\
If $\Gamma_0$ is of  rank $3$, we will show that this leads to a contradiction. Let $r:\L_u(1,1)\to \L_u(1,1)/I\cong \Sol$ where $I\cong \R^3$ is the kernel of the action of $\Heis_5$ on the space of $\V$-leaves. Looking at the intersection of $\Gamma_0$ with $I$, we observe that $\Gamma_0 \cap I$ is either $\Z$ or $\Z^3$ that is $\Gamma_0\subset I$. Indeed, if the intersection is $\Z^2$, the projection $r(\Gamma_1)$ would be isomorphic to $\Z\times \Z$ but with non-trivial projection to the $\SO^{\circ}(1,1)$-factor and non-trivial intersection with the pure translations which is absurd. If the intersection $\Gamma_0\cap I$ is trivial we get that $r(\Gamma_1)$ injects into $\Sol$, thus by Lemma \ref{Lemma: Sol subgroups} $r(\Gamma_1)\cap \R^2$ has an accumulation point. Now we observe:

\begin{lemma}\label{Obser: aff invariant subsets}
   Let $Q$ be a connected open subset of $\Mink^{1,1}$. Assume that $Q$ is invariant by a non-abelian subgroup $J\subset \Aff(\R) \subset \Isom(\Mink^{1,1})$ such that $J\cap \R^2$ is non-discrete. Then $Q$ is either Minkowski or half Minkowski bounded by a lightlike line.   
\end{lemma}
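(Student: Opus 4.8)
The plan is to pass to lightcone coordinates and reduce the statement to an elementary classification of scaling-invariant intervals on a line. First I would fix global coordinates $(u,v)$ on $\Mink^{1,1}$ in which the metric is a multiple of $du\,dv$, so that the boost subgroup $\SO^{\circ}(1,1)$ acts by $(u,v)\mapsto(\lambda u,\lambda^{-1}v)$ with $\lambda>0$. In these coordinates the given copy of $\Aff(\R)\subset\Isom(\Mink^{1,1})$ takes the standard form
\begin{equation*}
\Aff(\R)=\bigl\{(u,v)\mapsto(\lambda u+a,\ \lambda^{-1}v)\ :\ \lambda>0,\ a\in\R\bigr\},
\end{equation*}
whose translation part $\Aff(\R)\cap\R^2$ is exactly the lightlike translation line $\{(u,v)\mapsto(u+a,v)\}$. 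The point here is that the derived subgroup of any copy of $\Aff(\R)$ inside $\Isom(\Mink^{1,1})$ is a one-parameter translation group on which the boost acts by a nontrivial character, hence it must be a boost eigendirection, i.e.\ lightlike. This is precisely what will make the lightlike line $\{v=0\}$ appear as the boundary in the conclusion, so $J\cap\R^2=J\cap\R_u$ lies inside this lightlike line.

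Next I would pass from $J$ to its closure $\overline{J}$. Since $Q$ is open and $J$-invariant, its complement $Q^c$ is closed and $J$-invariant, and a limiting argument gives invariance under the closure: if $g_n\to g$ with $g_n\in J$ and $x\in Q^c$, then $g_nx\in Q^c$ and $g_nx\to gx$, so $gx\in Q^c$. Hence $Q^c$, and therefore $Q$, is $\overline{J}$-invariant, and I may work with $\overline{J}$. The two hypotheses now enter cleanly: because $J\cap\R^2$ is a non-discrete subgroup of the line $\R_u$ it is dense there, so $\overline{J}$ contains the whole lightlike translation group $\R_u$; and because pure translations form an abelian subgroup, a non-abelian $J$ cannot consist only of translations, so $J$ contains some element with $\lambda_0\neq1$, which then lies in $\overline{J}$.

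From here the argument is one-dimensional. Invariance of $Q$ under all translations $(u,v)\mapsto(u+a,v)$ forces $Q$ to be a union of horizontal lines, so $Q=\R\times S$ with $S:=\{v:(u,v)\in Q\}\subset\R$ open, and connectedness of $Q$ is equivalent to $S$ being an open interval. Invariance under the boost then reads $\lambda_0 S=S$. I would check directly that an open interval $S\subset\R$ with $\lambda_0 S=S$ for some $\lambda_0>1$ must be $\R$, $(0,\infty)$, or $(-\infty,0)$: any finite nonzero endpoint would be mapped to a different value, so each endpoint is forced to $0$ or $\pm\infty$. Translating back, $Q$ is either all of $\Mink^{1,1}$ or one of the half-planes $\{v>0\}$, $\{v<0\}$, each bounded by the lightlike line $\{v=0\}$, which is the claim.

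The only genuinely delicate step is the first one, namely correctly pinning down the embedding $\Aff(\R)\hookrightarrow\Isom(\Mink^{1,1})$ and verifying that its translation subgroup is lightlike, since this is exactly what yields a lightlike (rather than spacelike or timelike) bounding line. Once that is in place, the passage to $\overline{J}$ and the classification of scaling-invariant intervals are routine.
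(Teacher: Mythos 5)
Your proof is correct and takes essentially the same route as the paper: your lightcone coordinates make explicit the paper's invariant lightlike foliation, your passage to $\overline{J}$ using non-discreteness of $J\cap\R^2$ is precisely the paper's density/saturation argument for the lightlike leaves, and your classification of scaling-invariant open intervals $S$ is the paper's classification of the projection $\pi(Q)$ to the leaf space $\Mink^{1,1}/\mathcal{L}\cong\R$ under a nontrivial homothety. The only differences are cosmetic (coordinates instead of the foliation quotient, and a closure argument instead of translating a small ball by the dense subgroup).
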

\begin{proof}
  First we observe that any copy of the affine group in $\Isom(\Mink^{1,1})$ preserves a lightlike foliation $\mathcal{L}$ on Minkowski space. By hypothesis $Q$ is open and $J$-invariant, and since $J\cap \R^2$ non-discrete, we deduce that $Q$ is saturated by the lightlike leaves. Indeed, let $l$ be a lightlike leaf such that $l\cap Q\neq \varnothing$. Let $x_1\in l\cap Q$, taking a small ball around $x_1$ and translate it by the dense subgroup $J\cap \R^2$, we conclude that $l\subset Q$. Now, we consider the projection $\pi:\Mink^{1,1}\to \Mink^{1,1}/\mathcal{L}$. The set $\pi(Q)$ is an open connected subset of $\Mink^{1,1}/\mathcal{L}\cong \R$ which is invariant by a non-trivial subgroup of homotheties (given by the linear part of $J$), hence  $\pi(Q)$ is either $\R$ or $]0,+\infty[$. The claim follows.
   \end{proof}
 The projection  $\widehat{U}$ of $U$ modulo $\V$-leaves is $r(\Gamma_1)$-invariant, and by our assumption $r(\Gamma_1)$ has a non-trivial projection to the $\SO^{\circ}(1,1)$-factor. Let $H$ be the closure of $r(\Gamma_1)\cap \R^2$. By Lemma \ref{Lemma: Sol subgroups} we know that $H$ is either $\R^2$ or $\R$ (tangent to a lightlike direction). If $H=\R^2$ then $\widehat{U}$ must be everything. If $H\cong\R$, then $J:=r(\Gamma)$ and $Q:=\widehat{U}$ satisfy the assumptions of Lemma \ref{Obser: aff invariant subsets}. Thus, $\widehat{U}$ equals $\Mink^{1,1}$ or half Minkowski.
 
 We conclude that $U$ is contractible. Since $D$ is a covering map, $D$ is a diffeomorphism onto $U$. This implies that the holonomy representation is injective, in particular, we have $3=\cd(\pi_1(N))=\cd(\Gamma_1)$. This is a contradiction, since if $\Gamma_0$ is of rank $3$, the cohomological dimension of $\Gamma_1$ is $4$. The fact that $\cd(\Gamma_1)=4$ can be justified by first taking a syndetic hull $S_1$ of $\Gamma_1$ in $\L_u(1,1)$, which exists due \cite[Theorem 2]{saitoII}. We observe that $S_1$ has dimension $4$ and is contractible. Thus by \cite[VIII (8.1)]{brown2012cohomology} $\cd(\Gamma_1)=\dim (S_1)=4$.\\

Finally we have that either $\Gamma_0\cap I \cong \Z$ or $\Z^3$. If the intersection $\Gamma_0\cap I$ is $\Z$ 
the $q(\Gamma_1)$-invariance of $I$ would imply that $\Gamma_0\cap I$ is necessarily central which is excluded by our hypothesis  that $\Gamma_0 \cap Z$ is trivial. So we are left only where the $\Gamma_0\subset I$, here we get that $r(\Gamma_1)$ is isomorphic to $\Z$ and up to conjugacy fixes the point $x_0=(0,0)$.  However, this action cannot be cocompact on an open subset of $\Mink^{1,1}$. Indeed, the quadratic form $f(x,y)=x^2-y^2$ is $r(\Gamma_1)$-invariant, and $f(\widehat{U})$ is open. Passing to the quotient $r(\Gamma_1)\backslash\widehat{U}$ the image must be compact in $\R$, which is a contradiction.
\end{itemize} \end{proof}

\subsubsection{Dense lightlike leaves} Here we treat the case where all the leaves are minimal, i.e. dense. Let $p_1:G_\bX\to \R$ and $p_2:G_\bX\to \R\times \SO^{\circ}(1,1)$ denote the natural projections to the $\R$-factor and to $\R\times \SO^{\circ}(1,1)$, respectively. Let $\psi:\L_u(1,1)\to \L_u(1,1)/Z$ be the projection modulo the center. We denote the image $\psi(S)$ simply by $\widehat{S}$. 

\begin{fact}\label{Fact: dense projection}
    The leaves of the $\FF_M$-foliation are dense if and only if the projection of $p_1(\Gamma)$ is dense.
\end{fact}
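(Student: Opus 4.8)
The plan is to recognize that the transverse holonomy of the foliation $\FF_M$ is exactly $p_1\circ\rho$, and then to invoke the classical dichotomy for a foliation defined by a nowhere-vanishing closed one-form. The first step is to make precise the homomorphism recording how $G_\bX$ translates the leaf space. Recall from Paragraph \ref{para: space of leaves str} that $\eta=g(V,\cdot)=df$ for a submersion $f\colon\bX\to\R$ whose level sets are the $\FF$-leaves, and that every $k\in G_\bX$ satisfies $f\circ k=f+c_k$ (equation (\ref{eq: translations})). The assignment $k\mapsto c_k$ is then a continuous homomorphism $c\colon G_\bX\to\R$, and it is precisely the homomorphism through which $G_\bX$ acts by translations on the leaf space $\bX/\FF\cong\R$. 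Since a translation action on $\R$ is free, the stabilizer of a single $\FF$-leaf coincides with $\ker(c)$; as that stabilizer is $\SO^{\circ}(1,1)\ltimes\Heis_5$, the induced injection $G_\bX/(\SO^{\circ}(1,1)\ltimes\Heis_5)\hookrightarrow\R$ identifies $c$ with the projection $p_1$ up to a fixed nonzero scalar. In particular $c(\Gamma)$ and $p_1(\Gamma)$ are simultaneously dense or discrete in $\R$.

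Next I would transport this to the universal cover. Put $F:=f\circ D\colon\M\to\R$; this is a submersion whose level sets are the $\FF_\M$-leaves, and the equivariance $D(\gamma x)=\rho(\gamma)D(x)$ together with $f\circ\rho(\gamma)=f+c_{\rho(\gamma)}$ gives $F(\gamma x)=F(x)+c_{\rho(\gamma)}$. Hence $dF=\eta_\M$, and the period of $\eta_M$ along a loop representing $\gamma\in\pi_1(M)$ is $F(\gamma\tilde x_0)-F(\tilde x_0)=c_{\rho(\gamma)}$, so the group of periods of $\eta_M$ equals $P:=c(\Gamma)$, which by the previous step is dense if and only if $p_1(\Gamma)$ is dense. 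By Fact \ref{Fact: structure of the universal cover} we may moreover write $\M\cong\R\times\F_\M$ with $F$ playing the role of the $\R$-coordinate, so each $\FF_\M$-leaf is a connected level set $\{F=c_0\}$ and the full preimage in $\M$ of the corresponding $\FF_M$-leaf $L=\pi(\{F=c_0\})$ is $\bigcup_{p\in P}\{F=c_0+p\}$.

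Finally I would read off the geometry from $P$. If $P$ is discrete, say $P=\tau\Z$, then $F$ descends to a submersion $\bar F\colon M\to\R/\tau\Z\cong\mS^1$, which is a locally trivial fibration by compactness of $M$ (Ehresmann), so the leaves are its fibers and are therefore closed. If $P$ is dense, fix a leaf $L$ as above and an arbitrary point $y=\pi(\tilde y)\in M$ with $F(\tilde y)=s$; choosing $p_k\in P$ with $c_0+p_k\to s$ and working in a chart in which $F$ is a coordinate function, the sheets $\{F=c_0+p_k\}$ pass through points converging to $\tilde y$, whence $\tilde y\in\overline{\bigcup_{p\in P}\{F=c_0+p\}}=\overline{\pi^{-1}(L)}$ and thus $y\in\overline{L}$. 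As $y$ is arbitrary, $L$ is dense. Combined with the already noted all-closed-or-all-dense dichotomy, this yields the stated equivalence.

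I expect the only genuinely geometric point — and hence the main thing to get right — to be the identification of $c$ with $p_1$, i.e. checking that $\SO^{\circ}(1,1)\ltimes\Heis_5$ is exactly the subgroup fixing every leaf while the remaining $\R$-factor surjects onto the translation group of the leaf space; once this is pinned down, the remainder is the standard theory of foliations defined by a closed non-vanishing one-form.
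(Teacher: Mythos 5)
Your proof is correct, but note that the paper itself offers no argument for this Fact at all: it is stated bare (as a standard consequence of the transversal translation structure set up in Paragraph~\ref{para: space of leaves str}), so there is no proof to match yours against. What you have written is exactly the standard argument the paper implicitly invokes, and it is complete: the homomorphism $k\mapsto c_k$ from (\ref{eq: translations}) has kernel equal to the stabilizer of a leaf, which by Subsection~\ref{Subsection: leaf structure} is the subgroup $\SO^{\circ}(1,1)\ltimes\Heis_5=\ker p_1$ (as a subgroup of $G_\bX$, not merely up to isomorphism --- this is confirmed by the explicit coordinates $(v,x,y,u)$ in which $\SO^{\circ}(1,1)$ acts only on $(x,y)$), so $c=\lambda p_1$ with $\lambda\neq 0$; the equivariance $F(\gamma\tilde x)=F(\tilde x)+c_{\rho(\gamma)}$ identifies the period group of $\eta_M$ with $c(\Gamma)$; and the dichotomy for subgroups of $\R$ then gives closed fibers of a fibration over $\mS^1$ (via Ehresmann) versus dense leaves (via the product structure of Fact~\ref{Fact: structure of the universal cover}, which makes each level set of $F$ a single leaf --- a point your computation $\pi^{-1}(L)=\bigcup_{p\in P}\{F=c_0+p\}$ quietly but correctly uses). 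The only cosmetic omission is the case $P=\{0\}$: a discrete period group could a priori be trivial, but then $F$ would descend to a submersion from the compact $M$ to $\R$, which is impossible; this is worth one sentence. Your closing remark is also accurate: the identification $\ker c=\ker p_1$ is the single geometric input, and everything else is the general theory of foliations defined by a nowhere-vanishing closed one-form.
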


Due to the fact that the leaves are no longer closed, in order to understand the dynamics of the holonomy we have to describe explicitly the full isometry group of both geometries.\\

\paragraph{\textbf{Description of isometry groups}}
Recall that the identity component of the isometry group of $\bX_\H$ and $\X_\E$ are denoted by $G_\H$ and  $G_\E$, respectively. Both groups are isomorphic to $(\R\times \SO^{\circ}(1,1))\ltimes\Heis_5$, however, with different $\R$-action. Here we give explicitly the action of the $\R$-factor for both geometries.\\
\paragraph{\textit{Hyperbolic Oscillator geometry.}} An arbitrary element of $\R\times \SO^{\circ}(1,1)$ acts on $\Heis_5$ by the exponential of a derivation $L^\H_{s,t}$ written in a standard (or symplectic) basis of $\heis_5$. Namely, in a basis spanned by $(\z, a_1, a_2, a_3, a_4)$ such that the non-trivial commutators are $[a_1, a_3]=[a_2, a_4]=\z$. The center of Heisenberg $\z$ is in the kernel of $L^\H_{s,t}$ and the transversal subspace spanned by $\mathcal{B}=\{a_1,a_2,a_3,a_4\}$ is invariant. Modulo the center and with respect to the basis $\mathcal{B}$, the derivation $L^\H_{s,t}$ takes the following form,
\begin{equation}L^{\H}_{s,t}:=\label{derivation-hyperbolic}
    \begin{pmatrix}
    
    0&t&-s&0\\
    t&0&0&s\\
    -s&0&0&-t\\
    0&s&-t&0\\
\end{pmatrix}, \ s,t \in \R.\\
\end{equation}
 
\paragraph{\textit{Elliptic geometry.}} Here with same framework as above we have that an arbitrary element of $\R\times \SO^{\circ}(1,1)$ acts on $\Heis_5/Z$ via the exponential of a derivation that takes the following form with respect to the basis $\mathcal{B}$
\begin{equation}
    L^{\E}_{s,t}:=\label{derivation-elliptic}
    \begin{pmatrix}
   
    0&t&-s&0\\
    t&0&0&s\\
    s&0&0&-t\\
    0&-s&-t&0\\
\end{pmatrix}, \ s,t\in \R.
\end{equation}

The $\SO^{\circ}(1,1)$-factor in both cases acts via the exponential of a derivation $L^{\H}_{0,t}$ and $L^{\E}_{0,t}$, respectively. For more details on the isometry groups see \cite[Proposition 4.19]{Matea} or \cite[Proposition 3.9]{kath2024pseudo}.
\begin{remark}[$G_\E$ is not completely solvable]\label{Remark: EV of L_{s,t}}
    In contrast to the hyperbolic geometry, the group of the elliptic geometry is not completely solvable. In fact one can see that the eigenvalues of  the map induced by $L^\E_{s,t}$  on $\heis_5$ modulo its center are $\{\pm t\pm is\}$.
\end{remark}

\begin{remark}\label{Remark: unimodular invariant plane implies hyperbolic geometry}
    A derivation $L=L^\E_{s,t}$ with $s\neq0, t\neq0$, cannot preserve a plane $\widehat{\p}\subset\heis_5/\z$ such that the induced action of $L$ on $\widehat{\p}$ is unimodular, i.e. $\tr(L_|\widehat{\p})=0$.
\end{remark}

\begin{proof}
     Assume such $\widehat{\p}$ exists. By the previous remark in order for the $L$-action to be unimodular on $\widehat{\p}$, $L$ must have two antipodal eigenvalues $\lambda$ and $-\lambda$. However, the derivation $L$ has real coefficients, thus the eigenvalues of $L,$ when restricted to the real plane $\widehat{\p}$, are complex conjugate. So, $-\lambda=\overline{\lambda}$, which implies that $\lambda$ is real. This is a contradiction.
\end{proof}
Since we gave the exact form of the derivation, we can show now the following  lemma. 
\begin{lemma}\label{Lemma: invariant unimodular plane}

 Let $\p\subset\heis_5$ be a proper subalgebra with $\dim \widehat{\p}=2$.
    Assume that $\widehat{\p}$ is invariant under an element $L:=L^\H_{s,t}$, where $s\neq 0$. Assume that the induced action of $L$ is unimodular. Then $\p\cong\heis_3$.
\end{lemma}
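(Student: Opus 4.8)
The plan is to translate the statement into symplectic linear algebra on $V:=\heis_5/\z\cong\R^4$ and then read everything off the spectrum of $L$. Since $L=L^\H_{s,t}$ annihilates the center $\z$ and is a derivation, the identity $L[A,B]=[LA,B]+[A,LB]$ together with $[A,B]\in\z$ forces $L$ to preserve the skew form $\omega$ on $V$ induced by the bracket, namely $\omega(a_1,a_3)=\omega(a_2,a_4)=1$ with $\omega=0$ on the remaining basis pairs; that is, $L\in\sp(V,\omega)$. The useful dictionary is elementary: a $2$-plane $\widehat{\p}\subset V$ lifts to a subalgebra $\p\subset\heis_5$ with $\p\cong\heis_3$ precisely when $\omega|_{\widehat{\p}}\neq 0$ (then $[\p,\p]=\z$ forces $\z\subset\p$), whereas $\widehat{\p}$ Lagrangian gives $[\p,\p]=0$, i.e. $\p$ abelian. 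Thus the Lemma is equivalent to the assertion that every $L$-invariant, unimodular $2$-plane $\widehat{\p}$ is $\omega$-nondegenerate, so my target is to exclude an $L$-invariant Lagrangian plane on which $L$ acts with trace zero.

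Next I would compute the spectrum of $L$. A short calculation gives $L^2=(s^2+t^2)\,\mathrm{Id}+2st\,J$, where $J$ is the involution $a_1\mapsto a_4,\ a_2\mapsto -a_3,\ a_3\mapsto -a_2,\ a_4\mapsto a_1$ with $J^2=\mathrm{Id}$; since the eigenvalues of $J$ are $\pm 1$, those of $L^2$ are $(s\pm t)^2$, so $L$ has the real spectrum $\{\pm(s+t),\pm(s-t)\}$. This genuine hyperbolicity is exactly what separates $L^\H_{s,t}$ from the elliptic derivation, whose eigenvalues $\pm t\pm is$ are complex (cf. the preceding Remark). I would then invoke the standard symplectic eigenspace orthogonality: for $L\in\sp(V,\omega)$ one has $\omega(E_\alpha,E_\beta)=0$ unless $\alpha+\beta=0$, so $\omega$ pairs $E_\alpha$ nondegenerately with $E_{-\alpha}$.

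The argument now runs as follows. Because $\tr(L|_{\widehat{\p}})=0$, the eigenvalues of $L|_{\widehat{\p}}$ are $\{\kappa,-\kappa\}$ with $\kappa\in\{\pm(s+t),\pm(s-t)\}$. In the principal case $s\neq 0$, $t\neq 0$, $s\neq\pm t$ the four eigenvalues of $L$ are distinct, hence each eigenspace is one-dimensional and every invariant $2$-plane is a span of two eigenlines; the only trace-zero such planes are $E_{s+t}\oplus E_{-(s+t)}$ and $E_{s-t}\oplus E_{-(s-t)}$, each $\omega$-nondegenerate by the pairing above, giving $\p\cong\heis_3$. The coincidences $t=\pm s$ (where $0$ becomes a double eigenvalue but $L$ stays diagonalizable) I would dispatch by hand, checking that the two trace-zero invariant planes $E_0$ and $E_{2s}\oplus E_{-2s}$ are symplectic; for instance when $t=s$ one has $E_0=\Span(a_1-a_4,\,a_2+a_3)$ and $\omega(a_1-a_4,\,a_2+a_3)=2\neq 0$.

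The main obstacle is precisely the degeneracy of the spectrum: when eigenvalues collide, invariant planes need no longer be spanned by eigenlines, and a multidimensional eigenspace can harbor isotropic invariant planes of trace zero. This is where the hypotheses must be used with care — the assumption $s\neq 0$, together with the nontrivial $\SO^{\circ}(1,1)$-part carried by $t$, is exactly what keeps the eigenspaces $E_{\pm\kappa}$ one-dimensional, so that the nondegenerate $\omega$-pairing between $E_{\kappa}$ and $E_{-\kappa}$ prevents $\widehat{\p}$ from being Lagrangian. I would therefore structure the proof so that the symplectic pairing does all the work on the generic stratum and reserve the explicit $2\times 2$ verifications for the finitely many degenerate strata $t=\pm s$, treating the question of whether the pure $\R$-direction $t=0$ can occur separately in light of the ambient (dense-hyperbolic-part) setting in which the lemma is applied.
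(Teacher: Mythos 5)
Your proposal follows the same basic route as the paper's proof --- diagonalize $L^\H_{s,t}$, use unimodularity to force the two eigenvalues on $\widehat{\p}$ to sum to zero, and conclude by showing the plane is symplectic --- but your execution is more robust. The paper normalizes $s=1$, lists the eigenvalues $\pm(1+t),\pm(1-t)$ with explicit eigenvectors, and checks by hand that the two opposite-eigenvalue pairs of listed eigenvectors have non-zero bracket; your pairing principle $\omega(E_\alpha,E_\beta)=0$ unless $\alpha+\beta=0$ eliminates those computations, and you explicitly treat the strata $t=\pm s$, which the paper passes over in silence. One correction to your wording: since $L^\H_{s,t}$ is a symmetric matrix, it is diagonalizable for every $(s,t)$, so invariant planes are always spanned by eigenlines; the danger at a degenerate spectrum is not a failure of semisimplicity but that the eigenlines inside a two-dimensional eigenspace are arbitrary, so the nondegenerate pairing $E_\kappa\times E_{-\kappa}\to\R$ no longer forces $\omega$ to be nonzero on a chosen pair of lines.

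That danger is realized in the one case you defer, and this is the key point: at $t=0$ the lemma as stated is false, so no argument could have closed that case. For $L=L^\H_{s,0}$, $s\neq0$, the eigenspaces $E_{\pm s}$ are two-dimensional, and $\widehat{\p}:=\Span\la a_1-a_3,\ a_2-a_4\ra$ is $L$-invariant with eigenvalues $s,-s$ (hence unimodular) yet Lagrangian, since $[a_1-a_3,a_2-a_4]=0$; any lift $\p$ is abelian, not $\heis_3$. The paper's own proof has exactly this blind spot: once the spectrum degenerates, ``eigenvectors with opposite eigenvalues'' includes far more pairs than the two it checks (even among its four displayed eigenvectors, at $t=0$ the pairs from columns $1,2$ and from columns $3,4$ acquire opposite eigenvalues and have vanishing bracket). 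So the lemma must carry the extra hypothesis $t\neq 0$ --- precisely what your remark about the ``nontrivial $\SO^{\circ}(1,1)$-part'' anticipates --- and with that hypothesis your argument is a complete proof. Your instinct to settle $t=0$ from the ambient applications is also the right way to repair the paper, but it is not automatic: in the final contradiction of Section \ref{Section: completeness 4d} and in the first half of Proposition \ref{Prop: Standrad} the relevant plane is invariant under all of $\R\times\SO^{\circ}(1,1)$ with unimodular action, so one can invoke the corrected lemma with some $t\neq0$; in the second half of Proposition \ref{Prop: Standrad}, however, the one-parameter group furnished by the proof may itself have $t=0$, and ruling out the resulting possibility $S_0\cong\R^3$ (lying over a Lagrangian plane, which would make $S\cong\Sol\times\R$) requires an argument that the lemma does not supply.
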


\begin{proof}
    Up to scale we can assume $s=1$. The eigenvalues  with the corresponding  eigenvectors  are of the form  \begin{equation}
    \left[\begin{array}{c}
-1-t 
\\
 -1+t 
\\
 1-t 
\\
 1+t 
\end{array}\right],\left[\begin{array}{cccc}
1 & -1 & -1 & 1 
\\
 -1 & 1 & -1 & 1 
\\
 1 & 1 & -1 & -1 
\\
 1 & 1 & 1 & 1 
\end{array}\right]
   \end{equation} 

The Lie algebra $\widehat{\p}$ is spanned by two eigenvectors. Since the action is unimodular we have to consider eigenvectors with opposite eigenvalues. For these vectors we see clearly that their bracket seen as elements in $\heis_5$ is non-zero.
\end{proof}
Coming back to our problem, namely, to show that the lightlike leaves of a compact $(G_\X,\X)$-manifold are complete. As in the beginning of proof of Proposition \ref{Prop: Kleinian Markus}, we can reduce our problem to the case where $q(\Gamma_1)$ is discrete isomorphic to $\Z$. 

The goal of the rest of this subsection is to show that the latter case cannot happen under our standing assumption that $p_1(\Gamma)$ is dense. So assume that $q(\Gamma_1)$ is isomorphic to $\Z$.\\  

Let $H$ be the topological closure of $p_2(\Gamma)$ in $\R\times \SO^{\circ}(1,1)$.
\begin{lemma}\label{Lemma: non-discrete p2 projection}
     The group $H$ contains a one-parameter group transverse to the $\SO^{\circ}(1,1)$-factor.
\end{lemma}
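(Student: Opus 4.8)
The plan is to use that $\R\times\SO^{\circ}(1,1)$ is abelian and two-dimensional. Since $\SO^{\circ}(1,1)\cong\R$, I identify $\R\times\SO^{\circ}(1,1)$ with $\R^2$, writing a point as $(s,t)$ with $s$ the $\R$-coordinate and $t$ the $\SO^{\circ}(1,1)$-coordinate; then $H$ is a closed subgroup of $\R^2$ and hence of the form $\R^a\times\Z^b$ with $a+b\le 2$. A one-parameter subgroup transverse to the $\SO^{\circ}(1,1)$-factor is exactly a line through the origin other than the $t$-axis, so the conclusion is equivalent to showing that the identity component $H^{\circ}$ is a linear subspace that is \emph{not} contained in the $t$-axis. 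I will feed in two facts: first, since we are in the dense-leaf case, Fact \ref{Fact: dense projection} gives that $p_1(\Gamma)$ is dense in $\R$, and as $p_1=\mathrm{pr}_{\R}\circ p_2$ we get $p_1(H)\supseteq p_1(\Gamma)$ dense; second, the standing assumption $q(\Gamma_1)\cong\Z$ produces a nonzero element $(0,t_0)\in p_2(\Gamma_1)\subseteq H$ lying on the $t$-axis, because every element of $\L_u(1,1)$ has trivial $\R$-component.

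First I would clear away the good cases. If $H^{\circ}=\R^2$ then the $s$-axis is the desired transverse one-parameter group, and if $\dim H^{\circ}=1$ with $H^{\circ}$ distinct from the $t$-axis then $H^{\circ}$ is itself transverse; in both cases we are done. It therefore remains to exclude the two degenerate possibilities, namely $H^{\circ}$ equal to the $t$-axis and $H$ discrete, which I will rule out using density of $p_1(H)$.

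If $H^{\circ}$ were the $t$-axis, then $H/H^{\circ}$ would be a discrete subgroup of $\R^2/(t\text{-axis})\cong\R$, hence cyclic; since $p_1$ annihilates the $t$-axis it factors through $H/H^{\circ}$, so $p_1(H)$ would be cyclic, contradicting density. If $H$ were discrete it would be a lattice $\cong\Z^b$ with $b\le 2$; for $b\le 1$ the projection $p_1(H)$ is again cyclic, contradicting density. The only delicate case is $b=2$, and this is where the second input is essential: since $(0,t_0)\in H$, the intersection $H\cap(t\text{-axis})$ is infinite cyclic, generated by a primitive vector $w_1=(0,t_0')$; extending $w_1$ to a lattice basis $\{w_1,w_2\}$ forces the second generator $w_2=(s_2,t_2)$ to have $s_2\neq 0$, whence every element of $H$ has $s$-coordinate in $\Z s_2$, so $p_1(H)=\Z s_2$ is discrete — again contradicting density. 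This eliminates every configuration in which $H^{\circ}$ is trivial or equals the $\SO^{\circ}(1,1)$-factor, proving the lemma.

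The crux — and the only place both hypotheses of this subsection are genuinely combined — is the rank-two discrete case: a generic lattice in $\R^2$ \emph{does} project densely onto the $\R$-factor, so density of $p_1(\Gamma)$ by itself is not enough to conclude. What saves the argument is that $q(\Gamma_1)\cong\Z$ pins a nonzero lattice vector onto the $t$-axis, and a lattice containing a vector of the $\SO^{\circ}(1,1)$-factor cannot project densely onto the complementary $\R$-factor.
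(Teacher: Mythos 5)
Your proof is correct, and its skeleton --- classifying the closed subgroup $H\subseteq\R\times\SO^{\circ}(1,1)\cong\R^2$ and eliminating the degenerate possibilities using the two standing hypotheses --- is the same as the paper's. The execution of both nontrivial cases differs, and in each your route is more elementary and self-contained. For discrete $H$, the paper observes that $H\cap\SO^{\circ}(1,1)=q(\Gamma_1)\cong\Z$ is a cocompact lattice of the $\SO^{\circ}(1,1)$-factor and invokes Raghunathan's theorem (Lemma \ref{Lemma: discrete projection}) to conclude that $H$ projects discretely to the $\R$-factor, contradicting density of $p_1(\Gamma)$; you prove that discreteness by hand, extending the primitive generator of $H\cap\SO^{\circ}(1,1)$ to a lattice basis, which makes explicit the crux you correctly flag: a rank-two lattice \emph{can} project densely, and only the presence of a vector in the $\SO^{\circ}(1,1)$-factor forbids it. For the case $H^{\circ}=\SO^{\circ}(1,1)$, the paper derives that $q(\Gamma_1)$ would be dense (which tacitly uses that, $H/H^{\circ}$ being discrete, every sequence of $p_2(\Gamma)$ converging into the $\SO^{\circ}(1,1)$-factor eventually lies in it), contradicting $q(\Gamma_1)\cong\Z$; you instead note that $p_1$ factors through the cyclic group $H/H^{\circ}$, contradicting density of $p_1(\Gamma)$ --- arguably cleaner, as it avoids that extra observation. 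The paper's version buys brevity and reuse of an already-stated lemma; yours buys a black-box-free argument in which the role of each hypothesis is transparent.
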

\begin{proof}
   Assume that $H$ were discrete. Since we know that $H\cap \SO^{\circ}(1,1)=q(\Gamma_1)=\Z$, discreteness of $H$ implies that it projects discretely to the $\R$-factor by Lemma \ref{Lemma: discrete projection}. This is absurd due to our hypothesis. Hence, we get that $H$ is either $\R\times \SO^{\circ}(1,1)$ or contains a unique one-parameter group, the latter cannot be in $\SO^{\circ}(1,1)$, since otherwise, the projection $q(\Gamma_1)$ is dense, which is absurd.
\end{proof}
\begin{remark}\label{remark: lattice G_X}
    Let $\Gamma\subset G_\X$ be a discrete subgroup such that $p_1(\Gamma)$ is dense and $q(\Gamma_1)$ is isomorphic to $\Z$. Then $\Gamma$ cannot be a cocompact lattice of $G_\X$.
\end{remark}
\begin{proof}
    Assuming the converse, $\Gamma_0=\Gamma\cap \Heis_5$ is a cocompact lattice of $\Heis_5$,  since $\Heis_5$ is the nilradical of $G_\X$ \cite[Corollary 3.5]{raghunathan1972discrete}. So, $p_2(\Gamma)$ is discrete. This contradicts Lemma \ref{Lemma: non-discrete p2 projection}.
\end{proof}

\begin{remark}\label{remark: dimension S_0}
    Let $S_0$ be the syndetic hull of $\Gamma_0$ in $\Heis_5$. Then $ \widehat{S}_0$ is an $\SO^{\circ}(1,1)$-invariant plane, with a unimodular $\SO^{\circ}(1,1)$-action.
\end{remark}
\begin{proof}
    Let $S_1$ be the syndetic hull of $\Gamma_1$ in $\L_u(1,1)$. Consider the intersection $S_1\cap \Heis_5$. If $S_1\cap \Heis_5\subset Z$ we would get that $\Gamma_0\subset Z$, which is absurd by  Lemma \ref{Lemma: center transversal}. Hence, $S_1\cap \Heis_5$ is the nilradical of $S_1$, therefore $\Gamma_1 \cap S_1\cap \Heis_5=\Gamma_0$ is a cocompact lattice of $S_1\cap \Heis_5$. Since $\Gamma_0$ has a unique Malcev closure in $\Heis_5$, we deduce that $S_1\cap \Heis_5=S_0$. In particular, we get that the projection $\widehat{S}_0$ is a non-trivial subgroup of $\Heis_5/Z$. Since $S_1$ is unimodular we  see that its projection modulo center is unimodular too. However, due to the fact that the induced action of $\SO^{\circ}(1,1)$ on $\Heis_5/Z$ is hyperbolic this implies that $\dim(\widehat{S}_0)=2$ or $\dim(\widehat{S}_0)=4$. We claim that the latter case cannot produce.

    Indeed, if $\dim(\widehat{S}_0)=4$, then we get that $\Gamma_1$ is a cocompact lattice in $\L_u(1,1)$, which would imply that $\Gamma$ is a cocompact lattice of $G_\X$. This contradicts Remark \ref{remark: lattice G_X}.
\end{proof}

  \textbf{Final contradiction.} By Lemma \ref{Lemma: non-discrete p2 projection}, $H$ contains at least a one-parameter group transverse to the $\SO^{\circ}(1,1)$-factor. In addition we always have elements in $\SO^{\circ}(1,1)$ that come from the projection $q(\Gamma_1)$. Since elements of $q(\Gamma_1)$ act by hyperbolic transformations on $\widehat{S}_0$, we see that $\widehat{S}_0$ is even $\SO^{\circ}(1,1)$-invariant. Hence, $\widehat{S}_0$ is, in fact,  $\R\times \SO^{\circ}(1,1)$-invariant. By Lemma \ref{remark: dimension S_0}  we know that $\dim\widehat{S}_0=2$. Hence, by Remark \ref{Remark: unimodular invariant plane implies hyperbolic geometry}, we get $\X=\X_\H$. In particular, $G_\X$ is completely solvable.
     
      Taking a syndetic hull $S$ of $\Gamma$,  again by uniqueness of the Malcev closure, we get $S_0\subset S':=S\cap \Heis_5$. Thus, one gets $\dim(\widehat{S'})\geq \dim(\widehat{S}_0)=2$, and due to unimodularity of $S$ the dimension of $\widehat{S'}$ is either $2$ or $4$. Since $S'$ is normal in $S$, if $\dim(\widehat{S'})=2$, it follows by Lemma \ref{Lemma: invariant unimodular plane}, that $S'\cong \Heis_3$. However, $S_0$ is a (connected) subgroup of $S'$ with $\widehat{S}'=\widehat{S}_0$. Hence, we get  $S'=S_0$. Therefore, $\Gamma_0$ is a cocompact lattice of $S'$. In particular, $p_2(\Gamma)\subset S/S'\cong \R\times \SO^\circ(1,1)$ is discrete, contradicting again Lemma \ref{Lemma: non-discrete p2 projection}. Finally, if $\dim(\widehat{S'})=4$, then $\Gamma$ would be a lattice in $G_\bX$ which contradicts Remark \ref{remark: lattice G_X}.

    \begin{cor}
        Let $M$ be a Kleinian $(G_\X,\X)$-manifold. Assume that the flat lightlike leaves are dense. Then $M$ is modeled on the hyperbolic oscillator geometry. Moreover, the holonomy of the leaves is unipotent, i.e. takes values in the unipotent radical of $\L_u(1,1)$ which is $\Heis_5$.
    \end{cor}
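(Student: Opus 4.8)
The plan is to read both assertions off the case analysis already carried out in this subsection, organised around the $\SO^{\circ}(1,1)$-part $q(\Gamma_1)$ of the leaf holonomy. First I would note that, by Fact \ref{Fact: dense projection}, density of the leaves is equivalent to $p_1(\Gamma)$ being dense, which is therefore our standing hypothesis. As in the proof of Proposition \ref{Prop: Kleinian Markus}, the projection $q(\Gamma_1)$ is of exactly one of three types: trivial, dense, or infinite cyclic.

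I would then eliminate the two non-trivial types. If $q(\Gamma_1)$ is dense, then exactly as in the dense-hyperbolic case of Proposition \ref{Prop: Kleinian Markus}, Fact \ref{Fact: strong Zassenhaus} yields a connected nilpotent syndetic hull on which non-triviality of $q$ forces the Heisenberg part to be central, so $\Gamma_0\subset Z$, contradicting Lemma \ref{Lemma: center transversal}. If $q(\Gamma_1)\cong\Z$, this is precisely the configuration ruled out by the final contradiction established above under the hypothesis that $p_1(\Gamma)$ is dense. Hence $q(\Gamma_1)$ is trivial, which is to say $\Gamma_1\subset\Ker(q)=\Heis_5$, the unipotent radical of $\L_u(1,1)$. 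This already gives the ``moreover'' part of the statement.

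It remains to rule out $\X=\X_\E$, and here I would use the invariant-plane dichotomy. Taking a syndetic hull $S$ of $\Gamma$ and setting $S_0:=S\cap\Heis_5$, the projection $\widehat{S}_0\subset\heis_5/\z$ is, as in Remark \ref{remark: dimension S_0}, a $2$-dimensional subspace carrying a unimodular induced action and normalised by the directions present in $\overline{p_2(\Gamma)}$. By Remark \ref{Remark: unimodular invariant plane implies hyperbolic geometry}, an elliptic derivation $L^{\E}_{s,t}$ with $s\neq0$ and $t\neq0$ cannot preserve such a plane, since its eigenvalues $\pm t\pm is$ (Remark \ref{Remark: EV of L_{s,t}}) are genuinely complex and cannot furnish a real $2$-plane with antipodal eigenvalues. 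Exhibiting such a mixed derivation among the normalisers of $\widehat{S}_0$ would therefore force $\X$ to be the completely solvable model, i.e. $\X=\X_\H$.

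The hard part will be exactly this last point: producing a normalising derivation with \emph{both} $s\neq0$ and $t\neq0$. A pure rotation ($t=0$, eigenvalues $\pm is$) does preserve a unimodular plane and is a priori compatible with $\X_\E$, so the eigenvalue obstruction does not bite directly. I expect to close the gap by the centrality mechanism behind Lemma \ref{Lemma: center transversal}: in $\X_\E$ the dense projection $p_1(\Gamma)$ acts on $\heis_5/\z$ by a dense family of rotations normalising the discrete group $\Gamma_0$, and since a non-trivial rotation fixes no non-zero vector, this density would push the image of $\Gamma_0$ to $\{0\}$, i.e. $\Gamma_0\subset Z$, again contradicting Lemma \ref{Lemma: center transversal}. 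Making this rigorous when $\overline{p_2(\Gamma)}$ is itself discrete (an irrational subgroup of $\R\times\SO^{\circ}(1,1)$) is the delicate case, to be handled by re-running the dimension and normaliser computation of Remark \ref{remark: dimension S_0} in the present unipotent-holonomy regime.
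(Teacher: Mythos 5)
Your treatment of the ``moreover'' part is correct and is exactly the paper's argument: by Fact \ref{Fact: dense projection} the hypothesis is that $p_1(\Gamma)$ is dense, and the trichotomy for $q(\Gamma_1)$ (trivial, dense, infinite cyclic) is then settled as you say --- the dense case by Fact \ref{Fact: strong Zassenhaus} together with Lemma \ref{Lemma: center transversal}, and the case $q(\Gamma_1)\cong\Z$ by the ``final contradiction'' paragraph. So unipotency of the leaf holonomy is fine.

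The gap is in your exclusion of $\X=\X_\E$, and it is twofold. First, you ``take a syndetic hull $S$ of $\Gamma$'' inside $G_\E$ and invoke Remark \ref{remark: dimension S_0}; neither step is available. The group $G_\E$ is not completely solvable (Remark \ref{Remark: EV of L_{s,t}}), and the paper stresses at the start of Subsection \ref{Subsection: X_E} that Saito's theorem therefore does \emph{not} provide a syndetic hull for $\Gamma$; moreover, Remark \ref{remark: dimension S_0} is proved under the standing assumption $q(\Gamma_1)\cong\Z$ (its proof runs through Lemma \ref{Lemma: center transversal}), which is precisely what fails in the unipotent regime you are now in. Second, and more seriously, your proposed endgame --- push $\Gamma_0$ into $Z$ and then ``contradict Lemma \ref{Lemma: center transversal}'' --- cannot close the argument: that lemma yields $\Gamma_0\nsubseteq Z$ only under the hypothesis that $q(\Gamma_1)$ is \emph{non-trivial}, which is exactly what you have just shown to fail; $\Gamma_0\subset Z$ is perfectly consistent with that lemma when the holonomy is unipotent. (Your density-of-rotations mechanism is also not sound as stated: elements of $\Gamma$ outside $\Gamma_1$ act on $\heis_5/\z$ by the mixed maps $\exp(L^\E_{s,t})$, not by pure rotations, and the image of $\Gamma_0$ in $\heis_5/\z$ need not be discrete.) The ingredient you are missing is completeness, which is what the paper exploits: once the leaf holonomy is unipotent, the leaves are complete by Fact \ref{Fact: Lu structure}, hence $M$ is complete by Corollary \ref{Cor: leaf complete iff M complete}, so $\Gamma$ acts properly cocompactly on all of $\X$. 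If $\X$ were $\X_\E$, the analysis of Proposition \ref{Prop: X_{E}} (Fact \ref{Fact: strong Zassenhaus}, Remark \ref{Remark: EV of L_{s,t}}, and the cohomological-dimension count --- none of which requires a syndetic hull of $\Gamma$ itself, and none of which depends on the present corollary, so there is no circularity) forces $p_2(\Gamma)$ to be discrete isomorphic to $\Z$, hence the leaves to be closed, contradicting density. That is how the hyperbolic conclusion is actually obtained.
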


\subsection{Summary}
Let $M$ be a Kleinian compact $(G_\mathbf{X}, \mathbf{X})$-manifold, for $\X\in \{\X_\H, \X_\E\}$. We have shown that if the $\FF_M$-leaves are closed (which is always the case for the elliptic geometry), then they are Kleinian. By Proposition \ref{Prop: Kleinian Markus} it follows that they are complete. Then by \ref{Cor: leaf complete iff M complete} $M$ itself is complete. When the $\FF_M$-leaves are dense we deduced that the leaf holonomy is unipotent, i.e. $q(\Gamma_1)$ must be trivial. Moreover, this can only produce for the hyperbolic oscillator geometry. In particular, $M$ is an $(\L_u(2),\R^3)$-manifold. Therefore $M$ is complete by Theorem \ref{Theorem: complete transvection}. In other words we have:

\begin{cor}\label{Cor: completeness 4D}
    Kleinian compact  $(G_\mathbf{X}, \mathbf{X})$-manifolds are $(G_\mathbf{X}, \mathbf{X})$-complete.
\end{cor}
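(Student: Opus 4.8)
The plan is to argue by a clean dichotomy on the $\FF_M$-foliation and then to feed each branch into the completeness results already established. As recorded in Paragraph \ref{para: space of leaves str}, the $\FF_M$-leaves are the level sets of the submersion associated to the nowhere-vanishing closed one-form $\eta=g(V,\cdot)$; consequently the leaves are either all closed or all dense, and these are the only two configurations to treat. In both cases the goal is to show that the developing map $D:\M\to\bX$ is a diffeomorphism onto $\bX$, which is exactly $(G_\bX,\bX)$-completeness since $\bX$ is simply connected, and by Corollary \ref{Cor: leaf complete iff M complete} this in turn reduces to showing that $D$ restricts to a bijection on each $\FF_\M$-leaf.

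First I would dispose of the closed case, which (as already observed) is automatic for the elliptic space $\X_\E$. Here each leaf $\F_M$ inherits a compact Kleinian $(\L_u(1,1),\R^3)$-structure: by Proposition \ref{Prop: space of F-leaves} the developing map sends $\F_\M$ as a covering onto $\Omega\cap\F$, so the leaf is genuinely Kleinian for the affine geometry. Proposition \ref{Prop: Kleinian Markus} then guarantees that this structure is complete, i.e. $D$ is a bijection from $\F_\M$ onto the affine $3$-space. Corollary \ref{Cor: leaf complete iff M complete} immediately promotes this to a global statement: $D$ is a diffeomorphism and $M$ is $(G_\bX,\bX)$-complete.

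Next I would treat the dense case by invoking the structural analysis of the dense-leaves subsection. Under the Kleinian hypothesis that analysis forces the leaf holonomy to be unipotent, so the projection $q(\Gamma_1)$ to the $\SO^\circ(1,1)$-factor is trivial, and moreover this can only occur for the hyperbolic oscillator geometry $\X=\X_\H$. With $q(\Gamma_1)$ trivial the holonomy lands in the unipotent radical $\Heis_5$, so the $(G_\bX,\bX)$-structure reduces to the transvection geometry, i.e. $M$ becomes an $(\L_u(2),\R^3)$-manifold; completeness then follows directly from Theorem \ref{Theorem: complete transvection}.

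The genuinely delicate work is not in the present statement but in its two inputs, and I expect the dense case to be the harder one: the reduction to $\X_\H$ together with the conclusion that $q(\Gamma_1)$ is forced to be trivial rests on the explicit Lie-theoretic dynamics of $\Gamma\subset G_\bX$ — the behaviour of syndetic hulls, the unimodular-plane obstruction of Remark \ref{Remark: unimodular invariant plane implies hyperbolic geometry} and Lemma \ref{Lemma: invariant unimodular plane}, and the non-discreteness of $p_2(\Gamma)$ supplied by Lemma \ref{Lemma: non-discrete p2 projection}. Granting those, the corollary itself is essentially an assembly step: combine the closed/dense dichotomy coming from $\eta$ with Proposition \ref{Prop: Kleinian Markus} on one side and Theorem \ref{Theorem: complete transvection} on the other.
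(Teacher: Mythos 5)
Your proposal is correct and follows essentially the same route as the paper's own argument: the closed/dense dichotomy coming from the closed one-form $\eta$, with Proposition \ref{Prop: Kleinian Markus} and Corollary \ref{Cor: leaf complete iff M complete} handling the closed case, and the reduction to unipotent leaf holonomy (hence transvection geometry and Theorem \ref{Theorem: complete transvection}) handling the dense case. Your assessment that the corollary itself is an assembly step, with the real work residing in its two inputs, matches the structure of the paper exactly.
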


\section{Geometric classification}\label{Section: topology}
 \subsection{The hyperbolic oscillator geometry}\label{Subsection: splitosc geo}
  Kleinian compact $(G_\mathbf{X}, \mathbf{X})$-manifolds  are complete, i.e. $D$ is a bijective map onto $\bX$, by Corollary \ref{Cor: completeness 4D}.  Hence, the holonomy group $\Gamma$ is a discrete subgroup of $G_\bX$ that acts properly, cocompactly and freely on $\bX$. In this subsection we consider $\bX=\bX_\H$, the hyperbolic oscillator group with its bi-invariant metric and we classify Kleinian compact models in the following sense.

\begin{pr}\label{Prop: Standrad}
Let $M$ be a compact complete $(G_\H, \mathbf{X}_\H)$-manifold. Then the subgroup $\Gamma=\rho(\pi_1(M))$ has a syndetic hull,  i.e. there exists a closed connected subgroup $S$ of $G_\H$ that contains $\Gamma$ as a cocompact lattice. Moreover, $S$ is up to isomorphism either $\R\times \Heis_3$ or $\Osc_s\cong \R\ltimes \Heis_3$, where in both cases $\Heis_3\subset \Heis_5$. In particular, $\Gamma\cong \Z\ltimes \Gamma_0$ where $\Gamma_0$  is a cocompact lattice $\Heis_3$ or $\Gamma$ is isomorphic to a cocompact lattice of $\R\times \Heis_3$.
\end{pr}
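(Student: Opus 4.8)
The plan is to realize $S$ as a syndetic hull of $\Gamma$ and then to read off its isomorphism type from the way $S$ meets the nilradical $\Heis_5\trianglelefteq G_\H$. First I would use completeness (Corollary \ref{Cor: completeness 4D}): $M=\Gamma\backslash\bX_\H$ is then a closed aspherical $4$-manifold, so $\Gamma$ is torsion-free, finitely generated, with $\cd(\Gamma)=4$. The derivations $L^\H_{s,t}$ are symmetric, hence $\R$-diagonalizable with real eigenvalues $\pm s\pm t$, so $G_\H=(\R\times\SO^{\circ}(1,1))\ltimes\Heis_5$ is simply connected and completely solvable; Saito's theorem \cite[Theorem 2]{saitoII} then provides a connected closed subgroup $S\subset G_\H$ in which $\Gamma$ is a cocompact lattice. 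Since $S$ is contractible, $\dim S=\cd(\Gamma)=4$, and carrying a lattice forces $S$ to be unimodular. I set $S_0:=S\cap\Heis_5$; by uniqueness of the Malcev closure (exactly as in the Final contradiction paragraph) $\Gamma_0=\Gamma\cap\Heis_5$ is a cocompact lattice in $S_0$.

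Next I would pin down $\dim S_0$. The projection $p_2\colon G_\H\to\R\times\SO^{\circ}(1,1)$ has kernel $\Heis_5$, so $S/S_0$ embeds in $\R^2$ and $\dim S_0\ge 2$. Compactness of $M$ forces $p_1(\Gamma)\neq 0$: otherwise the $G_\H$-invariant submersion $f\colon\bX_\H\to\bX_\H/\FF\cong\R$ descends to an open map $M\to\R$ with compact image, which is absurd. Hence $S\not\subset\Heis_5$ and $\dim S_0\le 3$. To exclude $\dim S_0=2$ I would use unimodularity together with the symplectic nature of the $L^\H_{s,t}$: here $p_2(S)=\R\times\SO^{\circ}(1,1)$, so $\widehat{S}_0$ is invariant under the (commuting, simultaneously diagonalizable) family $\{L^\H_{s,t}\}$ and $\tr(L^\H_{s,t}|_{S_0})\equiv 0$. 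If $\z\subset S_0$, then $\widehat{S}_0$ is a common eigenline whose eigenvalue is one of the functionals $\pm s\pm t$, whose trace is not identically zero---contradiction. If $\z\not\subset S_0$, then $S_0$ is abelian and $\widehat{S}_0$ is a $2$-plane which trace-freeness forces to be spanned by an antipodal pair of eigenvectors; since $\z$ lies in the kernel of each $L^\H_{s,t}$, these lie in $\sp(\heis_5/\z)$, so such a pair is symplectically non-degenerate, contradicting that a $2$-dimensional subalgebra of $\heis_5$ missing the center is isotropic. Thus $\dim S_0=3$.

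With $\dim S_0=3$ I would force $S_0\cong\Heis_3$. A $3$-dimensional subalgebra of $\heis_5$ avoiding $\z$ would be abelian with $3$-dimensional isotropic image in the $4$-dimensional symplectic space $\heis_5/\z$, which is impossible; hence $\z\subset\s_0$ and $\widehat{S}_0$ is a $2$-plane. Now $\bar S:=p_2(S)$ is a one-parameter group, and since $p_1(\Gamma)\neq 0$ its infinitesimal generator is $L^\H_{\alpha,\beta}$ with $\alpha\neq 0$. Unimodularity of $S$ makes the induced action on $\widehat{S}_0$ trace-free, so Lemma \ref{Lemma: invariant unimodular plane}---which needs precisely $\alpha\neq0$---applies and yields $\s_0\cong\heis_3$, that is $S_0\cong\Heis_3\subset\Heis_5$.

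Finally I would identify the extension. Writing $S=\bar S\ltimes S_0\cong\R\ltimes\Heis_3$, the generator acts by $L^\H_{\alpha,\beta}$, which fixes the center $\z$ and restricts to $\widehat{S}_0$ as a trace-free symmetric, hence diagonalizable, map with eigenvalues $\pm\mu$. If $\mu=0$ the action is trivial and $S\cong\R\times\Heis_3$; if $\mu\neq 0$ it is a hyperbolic automorphism and $S\cong\Osc_s$. As $\Gamma_0=\Gamma\cap\Heis_3$ is a Heisenberg lattice and $\Gamma/\Gamma_0$ is a lattice in $S/S_0\cong\R$, this gives $\Gamma\cong\Z\ltimes\Gamma_0$ (respectively, $\Gamma$ is a cocompact lattice of $\R\times\Heis_3$). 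I expect the crux to be the second and third steps: ruling out an abelian or too-small $S_0$ and forcing the Heisenberg type rests entirely on combining unimodularity of $S$ with the symmetric/symplectic structure of the $L^\H_{s,t}$ and Lemma \ref{Lemma: invariant unimodular plane}; by contrast the production of $S$ and the final extension analysis are comparatively routine.
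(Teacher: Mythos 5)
Your proof is correct and follows essentially the same route as the paper: Saito's syndetic hull \cite{saitoII} with $\dim S=\cd(\Gamma)=4$, the unimodularity argument ruling out a two-dimensional projection to $\R\times\SO^{\circ}(1,1)$, Lemma \ref{Lemma: invariant unimodular plane} (whose eigenvector computation you re-derive conceptually via the symplectic form on $\heis_5/\z$) forcing $S_0\cong\Heis_3$, and the trivial-versus-hyperbolic dichotomy for the $\R$-action giving $\R\times\Heis_3$ or $\Osc_s$; along the way you also fill in two details the paper leaves implicit (complete solvability of $G_\H$, and excluding $\z\not\subset\s_0$ when $\dim\s_0=3$). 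One caveat: your blanket claim in the first step that $\Gamma_0=\Gamma\cap\Heis_5$ is a cocompact lattice in $S_0$ does not follow from Malcev uniqueness and can actually fail when $S\cong\R\times\Heis_3$ (a lattice there may meet $\Heis_3$ in a rank-two subgroup while projecting densely to the $\R$-factor); it is true precisely in the $\Osc_s$ case, by Mostow's theorem on intersections with the nilradical \cite[Corollary 3.5]{raghunathan1972discrete}, and since that is the only case where you invoke it, your final conclusions stand as stated.
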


\begin{proof}[Proof of Proposition $\ref{Prop: Standrad}$]
    Let $S$ be the syndetic hull of $\Gamma$, due to \cite[Theorem 2]{saitoII}. The Lie group $S$ is contractible, hence by \cite[VIII (8.1)]{brown2012cohomology} we get that $\dim S= 4$. 
    
    First we show that the projection of $S$ to $\R\times \SO^{\circ}(1,1)$ is one-dimensional. Assume that $S$ has a two-dimensional projection, so, surjective onto $\R\times\SO^{\circ}(1,1)$. Hence, we get a two-dimensional connected subgroup $S_0:=S\cap \Heis_5 \cong \R^2$. 
     In particular, $S_0$ is an abelian plane preserved by the  $\R \times \SO^{\circ}(1,1)$-action. Moreover, since $S$ is unimodular, the induced $\R \times \SO^{\circ}(1,1)$-action on ${S_0}$ is also unimodular. In particular, we see that the projection $\widehat{S}_0$ must be $2$-dimensional. However, using Lemma \ref{Lemma: invariant unimodular plane} for $\p=\s_0$, we see that this is absurd. Therefore, the projection of $S$ to $\R\times \SO^{\circ}(1,1)$ is of dimension $1$. Moreover, the projection to the $\R$-factor is non-trivial, since $\Gamma\backslash \bX_\H$ is compact. Thus, $S\cong \R\ltimes S_0$ with $\dim(S_0)=3$ (recall that $S_0$ is connected). Again by Lemma \ref{Lemma: invariant unimodular plane} we see that $S_0$ is necessarily isomorphic to $\Heis_3$ (observe that the condition of $s\neq0$ in the lemma is satisfied). Moreover, the induced $\R$-action on $\widehat{S}_0$ is either hyperbolic or trivial. When the induced $\R$-action  is hyperbolic $S$ is isomorphic to the hyperbolic oscillator. In the other case $S$ is isomorphic to $\R\times\Heis_3$. 
\end{proof}

\begin{cor}\label{Cor: Top clas I}
    Let $M$ be a compact complete $(G_\H, \mathbf{X}_\H)$-manifold. Then  geometrically, $M$ is a fiber bundle over $\mathbb{S}^1$ where the fibers are the lightlike leaves which are compact quotient of Heisenberg (i.e. nilmanifolds) with hyperbolic monodromy, or $M$ is a nilmanifold covered by $\R\times \Heis_3$.  In the second case the lightlike leaves could be dense in $M$.
\end{cor}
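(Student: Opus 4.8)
The plan is to promote the purely algebraic description of $\Gamma$ supplied by Proposition~\ref{Prop: Standrad} to an honest geometric fibration, the decisive point being that the syndetic hull $S$ acts \emph{simply transitively} on $\bX_\H$; once this is known, both alternatives in the corollary are read off from the Lie-theoretic splitting of $S$.

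First I would fix the standing data. By Corollary~\ref{Cor: completeness 4D} the structure is complete, so the developing map is a diffeomorphism and $M=\Gamma\backslash\bX_\H$ with $\Gamma=\rho(\pi_1(M))$ acting properly, freely and cocompactly. Proposition~\ref{Prop: Standrad} then gives a connected syndetic hull $S\subset G_\H$ in which $\Gamma$ is a cocompact lattice, with $S\cong\Osc_s\cong\R\ltimes\Heis_3$ (hyperbolic $\R$-action) or $S\cong\R\times\Heis_3$; in both cases $S$ is $4$-dimensional and simply connected.

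The main step is to show $S$ acts simply transitively. Since $\Gamma$ is cocompact in $S$ I may write $S=\Gamma K$ with $K\subset S$ compact; combined with properness of the $\Gamma$-action, a routine estimate (for compact $A,B\subset\bX_\H$ write $s=\gamma k$ and use finiteness of $\{\gamma:\gamma(KA)\cap B\neq\varnothing\}$) shows the $S$-action is itself proper, so all stabilizers are compact. As $S$ is simply connected and solvable it has no nontrivial compact subgroup, hence the action is free. A free proper action has closed embedded orbits, and since $\dim S=\dim\bX_\H=4$ each orbit is also open, so connectedness of $\bX_\H$ forces a single orbit. Thus the orbit map gives $\bX_\H\cong S$ and $M\cong\Gamma\backslash S$; under this identification (left translations by $S$) the parallel field $V$, which is the centre direction by Proposition~\ref{Prop: structure of the transvections}, and the foliation $\FF$ become left-invariant, so the $\FF_M$-leaves are the images of the cosets of the codimension-one subgroup $S_0$ integrating $V^\perp$. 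This simple-transitivity step is the only real obstacle; everything after it is unwinding the splitting.

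It then remains to read off the two geometries. If $S\cong\Osc_s$, the nilradical $\Heis_3$ is normal and equals $S_0$, so the quotient $S\to S/\Heis_3\cong\R$ descends to a fibration $M=\Gamma\backslash S\to\Z\backslash\R=\mathbb{S}^1$ whose fibres are exactly the $\FF_M$-leaves $\Gamma_0\backslash\Heis_3$, compact Heisenberg nilmanifolds, and whose monodromy is the hyperbolic automorphism induced by the $\R$-action of Proposition~\ref{Prop: Standrad}; this is the asserted bundle with hyperbolic monodromy. If $S\cong\R\times\Heis_3$, the group is nilpotent and $\Gamma$ is a cocompact lattice, so $M=\Gamma\backslash(\R\times\Heis_3)$ is a nilmanifold covered by $\R\times\Heis_3$. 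Here the leaves are orbits of the codimension-one (ideal) subgroup $S_0$, and whether $\Gamma\cap S_0$ is cocompact in $S_0$ decides closedness; for a lattice $\Gamma$ for which $S_0$ is not rational this fails, and the lightlike leaves are dense in $M$.
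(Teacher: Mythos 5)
Your proposal is correct and follows essentially the route the paper intends: Corollary \ref{Cor: Top clas I} is stated as an immediate geometric consequence of Proposition \ref{Prop: Standrad} together with completeness (Corollary \ref{Cor: completeness 4D}), and your case analysis reads off exactly the two alternatives of that proposition. The one step you make explicit that the paper leaves implicit --- that the syndetic hull $S$ acts properly (via $S=\Gamma K$ and properness of $\Gamma$), hence freely (no compact subgroups of a simply connected solvable group), hence simply transitively (open--closed orbits of full dimension), so that $M\cong\Gamma\backslash S$ and the $\FF_M$-leaves become images of cosets of $S_0=S\cap\Heis_5$ --- is precisely the standard bridge needed to justify the word ``geometrically'' in the statement, and your execution of it is sound.
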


Now we turn to compact manifolds modeled on the ``elliptic geometry'' $(G_\E,\mathbf{X}_\E)$.
\subsection{The $(G_\E,\mathbf{X}_\E)$-geometry}\label{Subsection: X_E}
This subsection is devoted to understand compact complete manifolds $M$ which are locally isometric to the space $\bX_\E$.  

As before, we get that $\Gamma=\rho(\pi_1(M))$ is a discrete subgroup of $G_\E$ that acts properly, cocompactly and freely on $\bX_\E$. Moreover, due to contractibility of $\bX_\E$, we deduce that $\mathsf{cd}(\Gamma)=4$. As in Remark \ref{Remark: EV of L_{s,t}} we know that $G_\E$ is not completely solvable we cannot apply \cite{saitoII} to  get a syndetic hull for $\Gamma$. Therefore, in this case we have to analyze the projection of $\Gamma$ to $\R\times \SO^{\circ}(1,1)$ case by case.

 We show:
 Recall the natural projection $p_2: G_\E\to \R\times \SO^{\circ}(1,1)$.
\begin{pr}\label{Prop: X_{E}}
    Let $M$ be a compact complete $(G_\E, \bX_\E)$-manifold. Then the projection $p_2(\Gamma)$ of $\Gamma$ to $\R\times\SO^{\circ}(1,1)$ is discrete isomorphic to $\Z$. Moreover, (up to finite index) $\Gamma\cong \Z\ltimes\Z^3$ or $\Z\ltimes \Gamma_0$, where $\Gamma_0$ is a cocompact lattice of $\Heis_3$.
     In particular, the lightlike leaves are always closed.
\end{pr}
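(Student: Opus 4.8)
The plan is to determine the image $p_2(\Gamma)\subset\R\times\SO^{\circ}(1,1)$ and to exploit that, in the elliptic geometry, the $\R$-factor acts with purely imaginary eigenvalues (Remark \ref{Remark: EV of L_{s,t}}): by Remark \ref{Remark: unimodular invariant plane implies hyperbolic geometry} and Lemma \ref{Lemma: invariant unimodular plane}, no proper real subspace of $\heis_5/\z$ can be both invariant and unimodular under an $L^\E_{s,t}$ with $s,t\neq 0$. Since $\bX_\E$ is contractible and $\Gamma$ acts properly, cocompactly and freely, $\Gamma$ is torsion-free with $\cd(\Gamma)=4$, and $M=\Gamma\backslash\bX_\E$ is a closed aspherical $4$-manifold. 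Set $\Gamma_0=\Gamma\cap\Heis_5$ and $\Gamma_1=\Gamma\cap\L_u(1,1)=\Gamma\cap\ker p_1$. First I would record that the leaves are closed, i.e. $p_1(\Gamma)$ is discrete: if they were dense then $p_1(\Gamma)$ is dense by Fact \ref{Fact: dense projection}, but the analysis of the dense case in Section \ref{Section: completeness 4d} shows that dense leaves force the model to be the hyperbolic oscillator, which is excluded here. Next I would rule out a dense hyperbolic part; since the elliptic action commutes with the Heisenberg homotheties, Fact \ref{Fact: strong Zassenhaus} applies verbatim, and the ``dense hyperbolic parts'' argument from the proof of Proposition \ref{Prop: Kleinian Markus} (combining Fact \ref{Fact: strong Zassenhaus} with Lemma \ref{Lemma: center transversal}) shows that $q(\Gamma_1)$ cannot be dense. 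Thus $q(\Gamma_1)$ is discrete.

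With both projections controlled I claim $H:=\overline{p_2(\Gamma)}$ is discrete, so $p_2(\Gamma)\cong\Z^k$. Indeed $p_1(H)=p_1(\Gamma)$ is discrete, whence $H^{\circ}\subset\{0\}\times\SO^{\circ}(1,1)$; and any $h\in H\cap\SO^{\circ}(1,1)$ is a limit $h=\lim p_2(\gamma_n)$ with $p_1(\gamma_n)\to 0$, so $p_1(\gamma_n)=0$ eventually by discreteness, forcing $h\in\overline{q(\Gamma_1)}=q(\Gamma_1)$, which is discrete. Hence $H$ is discrete. It remains to pin down $k$. The case $k=0$ is impossible: then $\Gamma\subset\Heis_5$ preserves every leaf, so the $\Gamma$-invariant submersion $\bX_\E\to\bX_\E/\FF\cong\R$ descends to an open map $M\to\R$, contradicting compactness of $M$. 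The real work is to exclude $k=2$.

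Suppose $k=2$, so $p_2(\Gamma)\cong\Z^2$ and $\cd(\Gamma_0)=\cd(\Gamma)-2=2$; then $\Gamma_0$ is a lattice in a connected two-dimensional subgroup $S_0$ of $\Heis_5$, necessarily abelian, $S_0\cong\R^2$. As $p_2(\Gamma)$ normalizes $\Gamma_0$ it preserves the unique Malcev hull $S_0$ and its projection $\widehat{S}_0=\psi(S_0)$; since $\{L^\E_{s,t}\}$ is a commuting family depending linearly on $(s,t)$ and $p_2(\Gamma)\cong\Z^2$ spans $\R\times\SO^{\circ}(1,1)$, the subspace $\widehat{S}_0$ is invariant under every $L^\E_{s,t}$. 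Because cocompactness on the leaf space forces $p_1(\Gamma)$ to be nontrivial, some holonomy element has $s\neq 0$, and its eigenvalues $\pm t\pm is$ are non-real, so there is no invariant real line; this rules out $\dim\widehat{S}_0=1$. Hence $\dim\widehat{S}_0=2$, and $p_2(\Gamma)$ preserves the lattice $\psi(\Gamma_0)$ inside it, so it acts through $\GL(2,\Z)$; consequently $\tr\!\left(L^\E_{s,t}|_{\widehat{S}_0}\right)$, being linear in $(s,t)$ and vanishing at two independent points, vanishes identically. Taking $s,t\neq 0$ then produces a unimodular invariant plane, contradicting Remark \ref{Remark: unimodular invariant plane implies hyperbolic geometry}. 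Therefore $k=1$, i.e. $p_2(\Gamma)\cong\Z$, and the closedness of the leaves follows at once since $p_1(\Gamma)$ is a further discrete projection.

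Finally, with $p_2(\Gamma)\cong\Z$ one gets $\cd(\Gamma_0)=3$, so $\Gamma_0$ is a cocompact lattice in a connected three-dimensional subgroup of $\Heis_5$, which is either $\R^3$ (giving $\Gamma_0\cong\Z^3$) or $\Heis_3$ (giving $\Gamma_0$ a cocompact lattice of $\Heis_3$); splitting the extension $1\to\Gamma_0\to\Gamma\to\Z\to1$ up to finite index presents $\Gamma$ as $\Z\ltimes\Gamma_0$, as claimed. I expect the exclusion of $k=2$ to be the main obstacle: both sub-cases $\dim\widehat{S}_0\in\{1,2\}$ must be closed off using the complex (elliptic) spectrum of $L^\E_{s,t}$, and one must check carefully that some holonomy element genuinely has $s\neq0$ and that the induced action on $\widehat{S}_0$ is unimodular, so that Remark \ref{Remark: unimodular invariant plane implies hyperbolic geometry} can be invoked.
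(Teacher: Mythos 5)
Your reductions are sound up to and including the discreteness of $p_2(\Gamma)$: using the dense-leaf analysis of Section \ref{Section: completeness 4d} to get $p_1(\Gamma)$ discrete, the ``dense hyperbolic parts'' argument (Fact \ref{Fact: strong Zassenhaus} plus Lemma \ref{Lemma: center transversal}) to get $q(\Gamma_1)$ discrete, and your limit argument for $H=\overline{p_2(\Gamma)}$ are all correct, as are the cases $k=0$ and $k=1$. The gap is exactly where you yourself predicted it: the exclusion of $k=2$. Your key inference --- that invariance of $\widehat{S}_0$ under the two group elements $\exp(L^\E_{s_1,t_1})$, $\exp(L^\E_{s_2,t_2})$ with $(s_1,t_1),(s_2,t_2)$ independent forces invariance under the whole linear family $L^\E_{s,t}$ --- is false, because $\exp$ is not faithful in the elliptic direction: $\exp(L^\E_{s,0})$ acts on $\heis_5/\z$ as $+\operatorname{Id}$ when $s\in 2\pi\Z$ and as $-\operatorname{Id}$ when $s\in\pi\Z\setminus 2\pi\Z$, so such a holonomy element imposes no constraint whatsoever on $\widehat{S}_0$. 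The same conflation of the spectrum of $L^\E_{s,t}$ with that of $\exp(L^\E_{s,t})$ breaks your exclusion of $\dim\widehat{S}_0=1$: you argue ``some element has $s\neq 0$, its eigenvalues $\pm t\pm is$ are non-real, hence no invariant line,'' but the operator that actually acts is $\exp(L^\E_{s,t})$, whose eigenvalues $e^{\pm t\pm is}$ are real whenever $s\in\pi\Z$; for $\exp(L^\E_{2\pi,0})=\operatorname{Id}$ every line is invariant. Holonomy with $s\in 2\pi\Z$ is not a pathology you may wave away: the paper's examples right after this proposition are built precisely from $\hat{\gamma}=(2\pi,\operatorname{Id})$.

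Moreover, no linear-algebra repair can close this case, because the configuration you are trying to exclude actually exists as a discrete subgroup. Take $\s_0=\Span\la a_1,a_2\ra$ (isotropic, hence abelian, and invariant under $L^\E_{0,t}$ with eigenvectors $a_1\pm a_2$), a lattice $\Gamma_0\subset S_0=\exp(\s_0)$ preserved by $\exp(L^\E_{0,t_2})$ for suitable $t_2$ (conjugate the hyperbolic action into $\operatorname{SL}_2(\Z)$), and the element $((2\pi,0),e)\in G_\E$, which is central because $\exp(L^\E_{2\pi,0})$ is the identity automorphism of $\Heis_5$. The group $\Gamma\cong\Z\times(\Z\ltimes\Z^2)$ these generate is discrete in $G_\E$, has $\cd(\Gamma)=4$ and rank-two projection $p_2(\Gamma)$, and satisfies every hypothesis your argument uses, while $\widehat{S}_0$ is \emph{not} invariant under $L^\E_{s,0}$ (indeed $L^\E_{s,0}(a_1)=s\,a_3$). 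What fails for this group is not algebra but the properness of its action on $\X_\E$, and that is exactly the ingredient the paper's proof supplies: in the rank-two case the commuting generators force $s_1,s_2\in\pi\Z$, hence some $\hat{\gamma}_3\in\Gamma$ with $p_2(\hat{\gamma}_3)\in\SO^{\circ}(1,1)$ nontrivial, hence $\cd(\Gamma_1)=3$ and a syndetic hull $S_1\cong\Sol$ of $\Gamma_1$ inside $\L_u(1,1)$; the Sub-Lemma then shows that no such $\Sol$ acts properly on $\X_\E$, since its abelian part meets a point stabilizer $\Stab_{\Heis_5}(p_s)$ in a noncompact subgroup, contradicting properness of $\Gamma_1$. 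Without this properness argument (which is where completeness of $M$, i.e.\ properness of the $\Gamma$-action on all of $\X_\E$, is genuinely used), the case $k=2$ cannot be ruled out.
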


\begin{proof}  We distinguish two natural cases:\\
\textbf{Non discrete projection.}
\begin{itemize}
    \item \textbf{Connected closure.} Hence, $\overline{p_2(\Gamma)}=\R^2$ or $\R$. Using Fact \ref{Fact: strong Zassenhaus} we conclude that $\Gamma$ has a nilpotent syndetic hull $N$. Hence, any derivation $L^\E_{s,t}$ generating a one-parameter group in the projection has a non-trivial kernel (modulo center), this is a contradiction by Remark \ref{Remark: EV of L_{s,t}}.
    
   \item \textbf{Non-connected closure}. $\overline{p_2(\Gamma)}\cong \R\times \Z$. Now let $\Gamma'$ be the non-discrete part of $\Gamma$, in the sense of Fact \ref{Fact: strong Zassenhaus}. Namely, $\Gamma'$ is the intersection of $\Gamma$ with the inverse image of the $\R$-factor  of $\overline{p_2(\Gamma)}$. Hence, one gets  the following exact sequence \begin{equation*}
       0\to\Gamma'\to \Gamma\to \Z\to0.
       \end{equation*}
    Since $\Gamma$ is virtually polycyclic (see \cite[Lemma 2.2]{milnor1977fundamental}) we have the equality on the cohomological dimension $\cd(\Gamma)=\cd(\Z)+\cd(\Gamma')$ (\cite[8.8 Lemma 8]{gruenberg2006cohomological}). So, $\cd(\Gamma')=3$. Hence, by Fact \ref{Fact: strong Zassenhaus} $\Gamma'$ has a nilpotent syndetic hull $N'$ in $G_\E$. Similarly, $\dim(N')=3$. However, this would imply that  the derivation $L^\E_{s,t}$  that generates the  $\R$-factor of the projection of $\Gamma'$ has a non-trivial kernel, which is absurd again by Remark \ref{Remark: EV of L_{s,t}}.
\end{itemize}
 \textbf{Discrete projection.}
 \begin{itemize}
     \item \textbf{Rank $2$ projection}.   $\overline{p_2(\Gamma)}\cong \Z\times \Z \subset \R\times \SO^{\circ}(1,1)$. Since $\Gamma$ is polycyclic, we get that $\cd(\Gamma_0)=2$, where $\Gamma_0=\Gamma\cap \Heis_5$. Let $\hat{\gamma}_1$ and $\hat{\gamma}_2$ be two elements of $\Gamma$ that project to two generators of $\Z\times \Z$. Let $S_0$ be syndetic hull (it is just the linear span of $\Gamma_0$) of $\Gamma_0$ in $\Heis_5$. The action of  $\hat{\gamma}_i$ by conjugation preserves $\Gamma_0$. Therefore it preserves $S_0$. Hence, their action on $S_0$  must be unimodular. Here we distinguish two subcases:\\
     \textbf{Case $Z\subset S_0$.} The induced action of $\hat{\gamma}_i$ on $\widehat{S}_0$ is given by $\exp(L^\E_{s_i,t_i})$. Moreover, since $Z$ is fixed and the action is unimodular, the action  on the one-dimensional quotient space $\widehat{S}_0$ must be trivial  for $\exp(L^\E_{s_i,t_i})$ for $i\in \{1,2\}$. By Remark \ref{Remark: EV of L_{s,t}} we get that $t_i=0$, which is a contradiction.\\
     \textbf{Case $Z\pitchfork S_0$.} Since the induced action of $\hat{\gamma}_i$ on $\Heis_5/Z$ is unimodular and semisimple, then the induced action on $\widehat{S}_0$ is just a semisimple element of $\operatorname{SL}_2(\R)$. Hence, the induced action of each $\hat{\gamma}_i$ is by a rotation (maybe trivial rotation), or by a hyperbolic transformation. However, since the $\hat{\gamma}_i$ commute, they must both be of the same type. In particular, when they both act by rotation we get $t_1=t_2=0$, and when the action is hyperbolic we get that $s_1,s_2\in \pi\Z$. The first case is clearly a contradiction. However, in the second case we get a non-trivial element $A\in \SO^{\circ}(1,1)\cap p_2(\Gamma)$. Namely, there are $m,n\in \Z$ such that $\log(A)=n{L}^\E_{s_2,t_2}-m{L}^\E_{s_1,t_1}\in \so(1,1)$. In other words, there is an element $\hat{\gamma}_3\in \Gamma$ such that  $p_2(\hat{\gamma}_3)\in  \SO^{\circ}(1,1)$. This implies that $\cd(\Gamma_1)=3$. Since  $\L_u(1,1)$ is completely solvable, $\Gamma_1$ has a syndetic hull $S_1$. One can see clearly that $S_1\cong \Sol$. Moreover, since $\Gamma_1$ acts properly, the $S_1$-action is proper. Now we show,

     \begin{sublem}
         There is no group $S_1\cong \Sol\subset \L_u(1,1)\subset G_{\E}$ that acts properly $\X_\E$.
     \end{sublem}
     \begin{proof}
         Consider $S_0=S_1\cap \Heis_5$. The abelian subgroup $S_0$ is spanned by $v_1+\alpha z$ and $v_2+\beta z$, where $\alpha, \beta \in \R$. Up to conjugacy by an element $h\in \Heis_5$, we can assume that $\alpha=\beta=0$. Now we show that $S_0$ intersects non-trivially the stabilizer of $p_s$ for some $s$. Indeed, in the basis $(a_1,a_2,a_3,a_4)$, we have that $\Stab(p_s)=\Span\la w_1,w_2\ra$ where $w_1=(\cos s,0,\sin s,0)$ and $w_2=(0, \cos s, 0, -\sin s)$. On the other hand, since $v_1,v_2$  are $\SO^\circ(1,1)$-invariant, hence $v_1=(x,0,x,0)$ and $v_2=(0,y,0,y)$. The non-trivial intersection of $S_0$ with the stabilizer is reduced to the system of equations \begin{equation*}
             \alpha \cos s=\alpha \sin s,\ \beta \cos s=-\beta \sin s, \ \text{with} \  (\alpha,\beta)\neq (0,0)
         \end{equation*} The latter system can always be solved for $s$. Since $S_0$ contains a non-compact subgroup in the stabilizer of a point, the action of $S_0$ cannot be proper. 
     \end{proof}
     The above lemma leads to a contradiction with the fact that $\Gamma_1$ acts properly on $\X_\E$.
     \item \textbf{Rank $1$ projection}. $\overline{p_2(\Gamma)}\cong \Z \subset \R\times \SO^{\circ}(1,1)$. This implies that $\mathsf{cd}(\Gamma_0)=3$. Hence, the syndetic hull $S_0$ of $\Gamma_0$ has two possibilities: $\R^3$ or  $\Heis_3\hookrightarrow \Heis_5$.  In both cases, $S_0$ contains $Z$. 
      Take an element $\hat{\gamma}\in \Gamma$ that projects to a generator of $p_2(\Gamma)$. Then  the action  of $\hat{\gamma}$ preserves $\Gamma_0$, thus $S_0$, so its action on $S_0$ is unimodular. Moreover, since the action is semisimple on Heisenberg modulo the center, the induced action on $\widehat{S}_0$ is either by a rotation or by a hyperbolic matrix. We conclude, that virtually $\Gamma\cong \Z\ltimes\Z^3$ or $\Gamma\cong \Z\ltimes\Gamma_0$ where $\Gamma_0$ is a lattice of $\Heis_3$.
 \end{itemize} \end{proof}

 Next we give a list of examples of the cases that are not ruled out in the above proposition.
\begin{examples}
     [\textbf{The Lightlike leaves are covered by $S_0$ isomorphic to $\R^3$ or $\Heis_3$.}] We construct a discrete group $\Gamma\subset G_\E$ with rank $1$ projection, such that $\Gamma$ acts properly and cocompactly on $\X_\E$. 

We  will first construct a group isomorphic to $S_0\subset \L_u(1,1)$ acting simply transitively on each lightlike leaf. We claim that this is equivalent to show that \begin{equation}\label{Eq: proper action}
    S_0 \cap \Stab_{\Heis_5}(p_s)=\{0\}, \ \text{for all} \ s\in \R.
\end{equation} Where $p_s=(0,0,0,s)$ written in the $\X_\E$-coordinates see \cite[Proposition 3.7]{kath2024pseudo}. Indeed, assume that $S_0 \cap \Stab(p_s)=\{0\}$, for all $s\in \R$. Since $S_0$ preserves each leaf, the assumption is exactly that the orbit $\mathcal{O}_s=S_0 \cdot p_s$ is open in the leaf $\F(p_s)$. However, the action of $S_0$ on each leaf preserves the flat affine structure of the leaves. Therefore, we can pullback the affine structure from the open orbit $\mathcal{O}_s$ to $S_0$, giving rise to a left invariant affine structure on $S_0$. However, since the affine structure is unimodular, i.e. $\L_u(1,1)\subset \operatorname{SL}_3(\R)$, such a structure is complete by \cite[Corollary pp. 186]{goldman1986affine}. Now taking $\Gamma\cong \la \hat{\gamma}\ra \times \Gamma_0$ where $\hat{\gamma}=(2\pi,\operatorname{Id})$ and  $\Gamma_0\subset S_0$ is a cocompact lattice. This gives rise to a discrete subgroup that acts properly, cocompactly  and freely on $\X_\E$. 

Now we construct $S_0\cong\R^3$. As $S_0$ is connected, it is enough to determine its Lie algebra. We claim that the abelian Lie subalgebra $\s_0:=\Span \la a_1+a_4,a_2+a_4,\z\ra $ satisfies Equation (\ref{Eq: proper action}). Indeed, $\Stab_{\Heis_5}(p_s)=\Span\la w_1,w_2\ra$ where $w_1=(\cos s,0,\sin s,0)$ and $w_2=(0, \cos s, 0, -\sin s)$. If $\Span\la w_1,w_2\ra \cap S_0 \neq \{0\}$, then there exists $(\alpha, \beta)\neq (0,0)$ such that $\alpha v_1+\beta v_2\in S_0$. This would imply that $e^{is}\begin{pmatrix}
    \alpha\\
    \beta
\end{pmatrix}=0$, which is absurd. It is worth noting  here that $M=\Gamma \backslash \X_\E$ is, in fact, isometric to a $\T^4$.

For an example where $S_0$ isomorphic to $\Heis_3$, we consider the Lie subalgebra $\s_0$, which is spanned by $\{a_1+a_3+a_4, a_2+2a_3+a_4,\z\}$. We argue similarly to the previous case and show that Equation (\ref{Eq: proper action}) is satisfied.

\end{examples}

\begin{cor}\label{Cor: Top clas II}
   Let $M$ be a  compact complete $(G_{{\E}}, \bX_\E)$-manifold. Then $M$ is geometrically (up to finite cover), a fiber bundle over $\mS^1$  where the fibers (which are the lightlike leaves) are either isometric to \begin{itemize}
       \item The flat three torus.
       \item A nilmanifold $N$, where $N$ is a compact quotient of $\Heis_3$. 
       
   \end{itemize}
       
   Conversely, there are examples realizing the above cases.
\end{cor}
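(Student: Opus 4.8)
The plan is to read this off from Proposition \ref{Prop: X_{E}} together with the transversal translation structure on the space of $\FF$-leaves, the only genuinely new ingredient being the passage to a global fibration over the circle. By Corollary \ref{Cor: completeness 4D} the developing map is a global diffeomorphism, so $M=\Gamma\backslash\bX_\E$ with $\Gamma=\rho(\pi_1(M))$ acting properly, cocompactly and freely. Proposition \ref{Prop: X_{E}} then tells us that, up to a finite cover, $\Gamma\cong\Z\ltimes\Gamma_0$ with $\Gamma_0=\Gamma\cap\Heis_5$ either isomorphic to $\Z^3$ or a cocompact lattice of a copy of $\Heis_3\subset\Heis_5$, that $p_2(\Gamma)$ is discrete isomorphic to $\Z$, and that all $\FF$-leaves are closed.

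First I would build the fibration over $\mathbb{S}^1$. Recall from Paragraph \ref{para: space of leaves str} the $G_\bX$-invariant closed one-form $\eta=df$ whose kernel is the distribution tangent to $\FF$; under $\Gamma$ the primitive $f$ changes only by the transversal translations $f\circ\gamma=f+c_\gamma$, so $df$ descends to a nowhere-vanishing closed one-form on $M$ and $f$ descends to a submersion $f_M\colon M\to\R/p_1(\Gamma)$. Since the leaves are closed, Fact \ref{Fact: dense projection} shows $p_1(\Gamma)$ is not dense in the translation group of the leaf space $\bX_\E/\FF\cong\R$, hence discrete; it is nontrivial, for otherwise $f$ would descend to a submersion $M\to\R$ with open image, contradicting compactness. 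Thus $p_1(\Gamma)\cong\Z$ and $f_M\colon M\to\mathbb{S}^1$. As $M$ is compact, $f_M$ is a proper submersion, hence a locally trivial fibration by Ehresmann, whose fibers are exactly the closed $\FF_M$-leaves; the generator of the $\Z$-factor of $\Gamma$ (whose $\SO^\circ(1,1)$-component acts on $\widehat{S}_0$ by a rotation or a hyperbolic element) realizes the monodromy.

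Next I would identify a fiber. Because $p_1$ is nontrivial on the $\Z$-factor, the stabilizer in $\Gamma$ of a leaf $\F$ equals $\ker(p_1|_\Gamma)=\Gamma_0$, so a fiber of $f_M$ is $\Gamma_0\backslash\F$. By Corollary \ref{Cor: leaf complete iff M complete} each leaf $\F$ is developed bijectively onto $\R^3$ with its flat affine $(\L_u(1,1),\R^3)$-structure, whence the fiber is $\Gamma_0\backslash\R^3$. If $\Gamma_0\cong\Z^3$ acts by a lattice of translations, the fiber is the flat three-torus $\T^3$; if $\Gamma_0$ is a lattice of $\Heis_3$, which acts simply transitively on $\R^3$ through its standard complete left-invariant flat affine structure, the fiber is the Heisenberg nilmanifold $\Gamma_0\backslash\Heis_3$. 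This yields the two listed cases. For the converse, the two explicit constructions preceding the statement exhibit manifolds realizing each type: the abelian hull $\s_0=\Span\la a_1+a_4,\,a_2+a_4,\,\z\ra$ produces an $M$ isometric to $\T^4$, a torus bundle with $\T^3$ fibers, while the hull $\s_0=\Span\la a_1+a_3+a_4,\,a_2+2a_3+a_4,\,\z\ra\cong\heis_3$ produces a bundle with Heisenberg nilmanifold fibers.

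Essentially all the work has already been absorbed into Proposition \ref{Prop: X_{E}}, so I do not expect a serious obstacle here. The one point I would verify with care is the deduction that $p_1(\Gamma)$ is genuinely discrete (and nontrivial) from the closedness of the leaves plus compactness, since it is precisely this that guarantees $f_M$ lands in a circle rather than in a dense one-parameter image and hence that $M$ is a bona fide bundle over $\mathbb{S}^1$; the identification of the fiber as $\Gamma_0\backslash\R^3$ and of $\Gamma_0$'s two possible isomorphism types is then immediate from the proposition.
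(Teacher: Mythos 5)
Your proposal is correct and takes essentially the same route as the paper, whose entire proof of this corollary consists of the two sentences that the classification is a ``direct consequence'' of Proposition \ref{Prop: X_{E}} and that the converse examples are the ones constructed just before the statement; your write-up simply makes that deduction explicit (the circle fibration via the invariant closed one-form, discreteness and nontriviality of $p_1(\Gamma)$, Ehresmann, and the identification of the fiber with $\Gamma_0\backslash\R^3$), which is exactly the intended content. The one phrase to soften is that $\Gamma_0\cong\Z^3$ ``acts by a lattice of translations'': a priori it acts by unipotent affine transformations of the leaf, and it is its syndetic hull $S_0\cong\R^3\subset\Heis_5$ acting simply transitively that exhibits the fiber as a flat torus, which is the same level of precision the paper itself uses.
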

\begin{proof}
    The geometric classification is a direct consequence of Proposition  \ref{Prop: X_{E}}. The examples are given above.
\end{proof}

\section{Further discussion: Dynamics of the parallel flow}\label{Section: dynamics}

Now we make some remarks concerning the dynamics of the parallel flow of compact locally symmetric rank $1$ pp-waves, highlighting a fundamental difference between the higher signature case and the Lorentz case. We recall that the action of a Killing field on a pseudo-Riemannian manifold $M$ is called equicontinuous if its closure (with respect to the Lie group topology) in the isometry group of $M$ is compact. 
We state first the following fact for locally homogeneous plane waves, 
\begin{fact}[Proposition 8.2 \cite{kath-CW}]
    Let $M$ be compact Cahen--Wallach. Then the parallel flow is periodic, i.e. the parallel flow is a $\mS^1$-action.
\end{fact}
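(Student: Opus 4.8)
The plan is to exploit the algebraic structure of the transvection group of a Cahen--Wallach space and reduce periodicity of the central lightlike flow to the nontriviality of $\Gamma\cap Z$, where $Z$ is the one-parameter group generated by $V$. By the completeness result of Leistner--Schliebner \cite{leistner2016completeness}, $M$ is geodesically complete, so $M=\Gamma\backslash X$ with $\Gamma\subset\Isom(X)$ acting properly discontinuously, freely and cocompactly. As in the Lorentz analogue of Proposition~\ref{Prop: structure of the transvections}, the transvection group is $\widehat{G}\cong\R\ltimes_L N$ with $N$ a Heisenberg group, but now the derivation $L$ has purely imaginary eigenvalues on $N/Z$; that is, the $\R$-action is \emph{elliptic}. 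The parallel lightlike field $V$ spans the center $Z\cong\R$ of $N$, and since $V$ is unique up to scale it is fixed by $\Isom^{\mathsf{o}}(X)$ and central there. Hence the parallel flow $\phi^t$ is left translation by $\exp(tV)\in Z$ and descends to $M$.

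The first step is to observe that periodicity of $\phi^t$ on $M$ is equivalent to the existence of $T>0$ with $\exp(TV)\in\Gamma$, i.e.\ to $\Gamma\cap Z\neq\{1\}$: the corresponding deck transformation then acts as the identity on $M$, forcing $\phi^T=\Id$. To produce such an element I would analyze $\Gamma_0:=\Gamma\cap N$. Because the $\R$-action is elliptic and therefore relatively compact, the relevant part of $\widehat{G}$ is compact-by-nilpotent, and a Raghunathan-type argument in the spirit of Lemma~\ref{Lemma: discrete projection} shows that $\Gamma_0$ is a cocompact lattice in its Malcev closure $N_0\subset N$. A cohomological-dimension count using $\cd(\Gamma)=\dim X$ together with cocompactness then forces $N_0$ to be all of $N$, so that $\Gamma_0$ is a genuine \emph{nonabelian} Heisenberg lattice. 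Consequently $[\Gamma_0,\Gamma_0]$ is a nontrivial subgroup of $\Gamma_0\cap Z$, which supplies the desired central element $\exp(TV)$ and hence the period.

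The main obstacle is precisely this last implication: ruling out that $\Gamma_0$ is abelian and transverse to the center $Z$, in which case the central flow would fail to close up and one would obtain only relative compactness rather than genuine periodicity. This is where the ellipticity of $L$ is indispensable, since the rotational (compact) nature of the $\R$-action is what forces the nilradical direction of $\Gamma$ to fill out $N$. In the hyperbolic higher-signature setting the analogous step breaks down, which explains why true periodicity is special to the Cahen--Wallach (Lorentz) case and why in signature $(2,2)$ one can only aim for the weaker equicontinuity statement of Theorem~\ref{Intro: Theorem Equicontnuity}.
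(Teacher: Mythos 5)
First, a point of orientation: the paper does not prove this statement at all --- it is imported as a Fact from \cite{kath-CW} (Proposition 8.2 there) and used as a black box --- so your reconstruction can only be judged on its own internal correctness. Your frame is fine: by \cite{leistner2016completeness} one may write $M=\Gamma\backslash X$, and periodicity of the parallel flow is indeed equivalent to $\Gamma\cap Z\neq\{1\}$, where $Z=\exp(\R V)$ is the center of the Heisenberg factor $N=\Heis_{2n+1}$ of the transvection group $\R\ltimes_L N$ of Proposition \ref{Prop: structure of the transvections} (in its Lorentz incarnation). Also, that $\Gamma_0=\Gamma\cap N$ is a cocompact lattice in its Malcev closure $N_0$ is automatic for discrete subgroups of simply connected nilpotent groups; no Raghunathan-type input or ellipticity is needed there.

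The fatal step is the cohomological-dimension count that you claim forces $N_0=N$: it proves exactly the opposite. Since $\Gamma$ acts freely, properly discontinuously and cocompactly on the contractible space $X$ of dimension $n+2$, one has $\cd(\Gamma)=n+2$, whereas a lattice in $\Heis_{2n+1}$ has cohomological dimension (Hirsch length) $2n+1$. Monotonicity of $\cd$ under passage to subgroups then shows $N_0=N$ would require $2n+1\le n+2$, i.e. $n\le 1$; so for every Cahen--Wallach space of dimension at least $4$ your conclusion is impossible. Even in dimension $3$ ($n=1$), $N_0=N$ would give $\cd(\Gamma_0)=\cd(\Gamma)$, hence $\Gamma_0$ of finite index in $\Gamma$; then the leaf coordinate $u\colon X\to\R$ would descend to a map on $M$ with open image, contradicting compactness (the same device as in Lemma \ref{Lemma: center transversal}). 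So $N_0\subsetneq N$ always, in fact $\dim N_0\le n+1$.

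What the proof actually needs is weaker but subtler: that $N_0$ is \emph{nonabelian}, for then $[\Gamma_0,\Gamma_0]$ is a nontrivial subgroup of $\Gamma\cap[N,N]=\Gamma\cap Z$. Since $[N,N]=Z$, an abelian $N_0$ transverse to $Z$ projects to an isotropic subspace of $(N/Z,\omega)$ and so has dimension at most $n$; the genuinely dangerous configuration is an abelian $N_0\supset Z$ of dimension $n+1$ whose lattice $\Gamma_0$ misses $Z$. This cannot be excluded by soft arguments: the paper's own $\T^4$-example for $\bX_\E$ in Section \ref{Section: topology} realizes precisely this configuration in signature $(2,2)$, and there the parallel flow is \emph{not} periodic --- only equicontinuous, which is exactly why Theorem \ref{Intro: Theorem Equicontnuity} claims no more. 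So the Lorentz-specific ingredient is not that ellipticity makes $\Gamma_0$ ``fill out'' $N$; it is that the rotation $e^{uL}$, being an isometry of a positive-definite metric with all rotation speeds nonzero, sweeps the family of isotropy (Lagrangian) subspaces through every Lagrangian complement --- already for $n=1$ the rotating stabilizer line meets every line --- so no abelian hull of the above kind can act properly and cocompactly along the lightlike leaves. Making that rigorous is the substance of \cite{kath-CW}, and it is absent from your proposal.
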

The hyperbolic oscillator geometry has a similar behavior to the Lorentz case, namely:
\begin{pr}

    Let $M$ be a compact complete $(G_\X,\mathbf{X})$-manifold. Then the action of the parallel flow is equicontinuous. Moreover, for the hyperbolic oscillator geometry it is always periodic.
\end{pr}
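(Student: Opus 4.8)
The plan is to recognise the parallel flow as a \emph{central} one--parameter subgroup of $G_{\bX}$ and then feed this into the structural results of Sections \ref{Section: completeness 4d} and \ref{Section: topology}. By Proposition \ref{Prop: structure of the transvections} the field $V$ is generated by the centre $Z$ of the Heisenberg factor $\Heis_5\subset G_{\bX}$; since the derivations $L^{\H}_{s,t}$ and $L^{\E}_{s,t}$ annihilate $Z$, the subgroup $Z$ is central in $G_{\bX}$. Hence its flow $\phi^{t}$ descends to $M=\Gamma\backslash\bX$ and commutes with every isometry of $M$ coming from $G_{\bX}$. As $V$ is lightlike it lies in $V^{\perp}$, so $\phi^{t}$ preserves each leaf of $\FF_{M}$ and restricts there to left--translation by $Z$.

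First I would settle equicontinuity. By Corollaries \ref{Cor: Top clas I} and \ref{Cor: Top clas II}, up to a finite cover $M$ is either a compact nilmanifold $\Gamma\backslash(\R\times\Heis_3)$ or a mapping torus over $\mS^{1}$ whose fibre is a compact quotient of $\R^{3}$ or of $\Heis_3$; in every case the parallel flow is translation by the central subgroup $Z$. On a compact nilmanifold the orbit closure of a \emph{central} one--parameter subgroup is a subtorus of the maximal central torus, hence compact; in the mapping--torus case the fibres are compact and $\phi^{t}$ restricts to such a central translation on each fibre, while centrality of $Z$ guarantees that the resulting fibrewise subtorus is preserved by the monodromy and therefore assembles into a global torus of isometries of $M$. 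In both situations $\overline{\{\phi^{t}:t\in\R\}}$ is contained in a compact torus, which is exactly the equicontinuity statement.

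It remains to upgrade this to periodicity when $\bX=\bX_{\H}$. By Proposition \ref{Prop: Standrad}, $\Gamma$ is a cocompact lattice in a group $S$ isomorphic either to $\Osc_s\cong\R\ltimes\Heis_3$ or to $\R\times\Heis_3$, and in both models the parallel direction $Z$ is exactly the centre of the Heisenberg factor. Since this Heisenberg factor is the nilradical (resp.\ a normal Heisenberg factor) of $S$, the intersection $\Gamma\cap\Heis_3$ is a cocompact lattice of $\Heis_3$, and the centre of a Heisenberg group is rational with respect to any lattice by Malcev theory; therefore $\Gamma\cap Z$ is a lattice $\cong\Z$ in $Z\cong\R$. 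Consequently $\phi^{t_{0}}=\Id$ for some $t_{0}>0$ and the flow factors through $\mS^{1}$, i.e.\ it is periodic.

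The main obstacle is precisely this rationality of $Z$. For the hyperbolic oscillator $Z$ is forced to be the centre of a Heisenberg factor and is hence automatically rational, which yields periodicity; for the elliptic geometry, Proposition \ref{Prop: X_{E}} allows $S_0\cong\R^{3}$, in which $Z$ may sit as an irrational line, so that $\Gamma\cap Z=\{e\}$ and the flow is equicontinuous with a higher--dimensional torus as closure but \emph{not} periodic. This explains why periodicity is asserted only for $\bX_{\H}$, and the only delicate point in the equicontinuity argument is checking that the fibrewise subtorus genuinely extends over the base, which is where the centrality of $Z$ in $G_{\bX}$ is indispensable.
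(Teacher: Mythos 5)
Your strategy is the paper's: identify the parallel flow with the central subgroup $Z\subset G_\bX$, then use Propositions \ref{Prop: Standrad} and \ref{Prop: X_{E}} to control $\Gamma$. Your $S\cong\Osc_s$ case (nilradical plus rationality of the Heisenberg centre) and your elliptic-case argument (fibrewise subtorus, monodromy-invariance via centrality --- which is the paper's compactness of $\overline{\Gamma Z}\subset\Gamma S_0$ modulo $\Gamma$ in geometric clothing) are sound, as is your closing remark on why periodicity can fail for $\bX_\E$. However, one step fails: when $S\cong\R\times\Heis_3$ you deduce that $\Gamma\cap\Heis_3$ is a cocompact lattice of $\Heis_3$ from the fact that $\Heis_3$ is a normal direct factor of $S$. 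Normality does not give rationality. Concretely, write $\Heis_3=\exp\langle X,Y,Z\rangle$ with $[X,Y]=Z$, fix $\alpha\in\R\setminus\Q$, and let $\Gamma\subset\R\times\Heis_3$ be generated by $(\alpha,\exp X)$, $(0,\exp Y)$, $(0,\exp Z)$, $(1,e)$. This is a discrete cocompact subgroup (it surjects onto the integer lattice of $\Heis_3$ with kernel a lattice $\Z\subset\R$), yet $\Gamma\cap(\{0\}\times\Heis_3)\cong\Z^2$ is not a lattice in $\Heis_3$. So in the nilmanifold case your route to $\Gamma\cap Z\neq\{e\}$, and hence to periodicity, breaks down.

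The repair is exactly what the paper does: intersect with the commutator subgroup rather than with the $\Heis_3$-factor. Since $S$ is simply connected, nilpotent, non-abelian, and $[S,S]=[\Heis_3,\Heis_3]=Z$, one has $[\Gamma,\Gamma]\subset Z$, and $[\Gamma,\Gamma]$ is non-trivial --- otherwise $\Gamma$ would be an abelian lattice in $S$, forcing its hull $S$ to be abelian, a contradiction. Hence $\Gamma\cap Z\neq\{e\}$ and periodicity follows; note that in the example above $[\Gamma,\Gamma]$ is generated by $(0,\exp Z)$, so the conclusion survives even though your intermediate claim does not. The general principle to invoke is that the commutator subgroup of a simply connected nilpotent Lie group is rational with respect to every lattice, whereas an arbitrary normal (even direct) factor need not be.
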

\begin{proof}
\textbf{Hyperbolic oscillator geometry.} The isometry group of $M$ is given by $\Gamma\backslash N_{G_\H}(\Gamma)$, where $\Gamma$ is the $\pi_1(M)$. Let $S$ be a syndetic hull of $\Gamma$. By Proposition \ref{Prop: Standrad}, we have that $S$ is either $\R\times \Heis_3$ or $\Osc_s=\R\ltimes \Heis_3$. We claim that $[\Gamma,\Gamma]\cap Z$ is non-trivial. Indeed, in both cases $[\Gamma, \Gamma]\subset \Heis_3$. If $S=\Osc_s$, then $[\Gamma, \Gamma]=\Gamma\cap \Heis_3$ is a lattice of $\Heis_3$, therefore it must intersect the center $Z$. In the case $S=\R\times \Heis_3$ we see that $[\Gamma,\Gamma]\subset Z \subset \Heis_3$. Moreover $[\Gamma,\Gamma]\cap Z$ cannot be trivial, because otherwise $\Gamma$ would be an abelian lattice of the nilpotent group $S$, hence $S$ itself would be abelian, which is absurd.\\
\textbf{Elliptic geometry.} The holonomy of the flat leaves is unipotent and is contained in $\Heis_5$. Let  $S_0$ be a syndetic hull of $\Gamma_0$. Since $\cd(\Gamma_0)=3=\dim S_0$, we see that $S_0$  contains the center of $\Heis_5$. The group $\Gamma$ is contained in $\Z\ltimes S_0$. Therefore, the topological closure $\overline{\Gamma Z}$ is contained in $\Gamma S_0$, because $\Gamma\backslash\Gamma S_0\cong \Gamma_0\backslash S_0$ is compact the result follows.
\end{proof}

\appendix
\section{Lie algebra of the transvection group}\label{Appendix}

\begin{pr}
      Let $\bX$ be a symmetric rank $1$ pp-wave of signature $(2,n)$ with $\dim(\bX)=n+2$.  Then the Lie algebra of the transvection group of $\bX$ is given by $\widehat{\g}_\bX\cong \R\ltimes_L\heis_{2n+1}$, where $\R$ acts trivially on the center of $\heis_{2n+1}$ via a non-inner derivation $L$. Moreover, the symmetric space  $\bX$ is isometric to the homogeneous quotient $\G_\bX/A^+$ where $A^+$ is a rank $n$ abelian subgroup of $\Heis_{2n+1}$ transversal to the center. The center of Heisenberg generates the lightlike vector field $V$ and the lightlike leaves are the $\Heis_{2n+1}$-orbits.
\end{pr}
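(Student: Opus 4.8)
The plan is to work directly with the symmetric triple of $\bX$, bypassing the explicit Kath--Olbrich lists. Write $\widehat{\g}_\bX=\k\oplus\p$ for the transvection algebra, where $\p\cong T_o\bX$ carries the metric $\la\cdot,\cdot\ra$ of signature $(2,n)$, $\k=[\p,\p]$ is the holonomy (isotropy) algebra acting effectively on $\p$, and the curvature at $o$ is $R(X,Y)Z=-[[X,Y],Z]$ for $X,Y,Z\in\p$ (see \cite{o1983semi}). The parallel lightlike field $V$ corresponds to a holonomy-fixed null vector $v_0\in\p$, i.e. $[\k,v_0]=0$ and $\la v_0,v_0\ra=0$. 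First I would fix a null vector $w_0$ with $\la v_0,w_0\ra=1$ and set $E:=\{v_0,w_0\}^\perp$, so that $\p=\R v_0\oplus E\oplus\R w_0$ with $E$ of signature $(1,n-1)$, and $v_0^\perp=\R v_0\oplus E$ is the tangent space at $o$ of the $\FF$-leaf. Since $\k$ fixes $v_0$ and preserves $v_0^\perp$, each $k\in\k$ acts in the standard parabolic form $k\cdot v_0=0$, $k\cdot e=A_k e-\la e,u_k\ra v_0$, $k\cdot w_0=u_k$, with $A_k\in\so(E)$ and $u_k\in E$.

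The computational heart is to read off all brackets from the flatness of the leaves. From $[\k,v_0]=0$ one gets $R(\cdot,\cdot)v_0=0$, and the pair symmetry of $R$ then forces $R(v_0,\cdot)=0$; by effectiveness this yields $[v_0,E]=[v_0,w_0]=0$. Flatness of the totally geodesic leaf tangent to $v_0^\perp$ means $R(X,Y)Z=0$ for all $X,Y,Z\in v_0^\perp$, so $[[e_i,e_j],e_l]=0$ for $e_i\in E$; writing $k=[e_i,e_j]\in\k$ this gives $A_k=0$ and $u_k=0$, hence $[E,E]=0$. Consequently $\k=[\p,\p]$ is spanned by the elements $k_i:=[e_i,w_0]$. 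A curvature-symmetry computation, $\la R(e_i,w_0)e_j,e_l\ra=\la R(e_j,e_l)e_i,w_0\ra=0$ (the right side vanishing because $[E,E]=0$), then shows $A_{k_i}=0$; thus each $k_i$ is a pure translation, $[k_i,e_j]=-\la Se_i,e_j\ra v_0$ for a symmetric operator $S$ on $E$ (defined by $Se_i=u_i$), $[k_i,v_0]=0$, and $[k_i,k_j]=0$. In particular $\k$ is abelian and acts on $\p$ by nilpotent maps.

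Next I would invoke indecomposability together with the rank $1$ hypothesis to make $S$ nondegenerate: if $Se=0$ for some $e\in E$, then all brackets of $e$ with $\p$ vanish, so $e$ is central in $\widehat{\g}_\bX$, producing a parallel flat direction and a de Rham splitting (and a second parallel null direction), contradicting indecomposability. With $S$ nondegenerate the pairing $(k_i,e_j)\mapsto\la Se_i,e_j\ra$ is nondegenerate, the map $e_i\mapsto k_i$ is injective so $\dim\k=n$, and $\heis_{2n+1}:=\k\oplus\R v_0\oplus E$ is a Heisenberg algebra with one-dimensional center $\R v_0=\R V$ and symplectic quotient $\k\oplus E$. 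Moreover $\heis_{2n+1}$ is an ideal, since $[w_0,\k]\subset v_0^\perp$, $[w_0,E]\subset\k$, $[w_0,v_0]=0$. Setting $L:=\ad(w_0)|_{\heis_{2n+1}}$, the relations $[w_0,v_0]=0$, $[w_0,e_i]=-k_i$, $[w_0,k_i]=-Se_i$ show that $L$ is a derivation fixing the center and sending $E$ into $\k$; as every inner derivation of $\heis_{2n+1}$ has image in the center $[\heis_{2n+1},\heis_{2n+1}]=\R v_0$, the derivation $L$ is non-inner. Hence $\widehat{\g}_\bX=\R w_0\ltimes_L\heis_{2n+1}\cong\R\ltimes_L\heis_{2n+1}$.

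Finally, the geometric statements follow by exponentiation. The isotropy $A^+=\exp(\k)$ is an abelian rank $n$ subgroup of $\Heis_{2n+1}$ transverse to the center (as $\k\cap\R v_0=0$), so $\bX\cong\G_\bX/A^+$; the central line $\R v_0$ integrates to the flow generating $V$; and since $\heis_{2n+1}\cdot o$ has tangent space $\R v_0\oplus E=v_0^\perp$, the $\Heis_{2n+1}$-orbits are exactly the lightlike $\FF$-leaves. The main obstacle is the step linking the geometric hypotheses (indecomposability and the rank $1$ property) to the algebraic nondegeneracy of $S$, i.e. excluding a central vector in $\p$; the bracket computations themselves are routine once the curvature identity and the structure of the stabilizer of a null vector are in place.
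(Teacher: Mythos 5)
Your proposal is correct in substance and arrives at the same algebraic skeleton as the paper, but by a genuinely different computational route. The paper never touches the curvature tensor: it works inside the metric Lie algebra, using the Kath--Olbrich symmetric-triple structure (the $\ad$-invariant extension of the metric to all of $\widehat{\g}_\bX$ together with the involution $\Theta$), decomposes $\widehat{\g}_\bX=\R L\oplus\R V\oplus\a$ with $\a=\Span\{L,V\}^\perp$, and extracts every bracket from $\ad$-invariance, the grading identity $[\widehat{\g}_\bX^-,\widehat{\g}_\bX^-]=\widehat{\g}_\bX^+$, and flatness in the form $[[\a^-,\a^-],\a^-]=0$; the Heisenberg structure comes from the $2$-form $\omega=\la L\cdot,\cdot\ra$. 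You instead use only the metric on $\p$, the identity $R(X,Y)Z=-[[X,Y],Z]$, its pair symmetry, the parabolic normal form of the stabilizer of a null vector, and faithfulness of the isotropy representation. Under the dictionary $E\leftrightarrow\a^-$, $\k\leftrightarrow\a^+$, $w_0\leftrightarrow L$, $v_0\leftrightarrow V$, $\la S\cdot,\cdot\ra\leftrightarrow\omega$, the two arguments run in parallel; yours is more self-contained (no appeal to the existence of the invariant scalar product on the full transvection algebra), while the paper's gets the symplectic form and the Lagrangian splitting $\a=\a^+\oplus\a^-$ in one stroke from the orthogonality of $\widehat{\g}_\bX^+$ and $\widehat{\g}_\bX^-$. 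Both proofs share the identical crux: a degenerate direction of the pairing produces a central vector of $\widehat{\g}_\bX$ lying in the tangent space, which must then be excluded.

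Two points need tightening. First, you assert without proof that $S$ is symmetric, and you genuinely need it: symmetry is what turns $Se=0$ into $[\k,e]=0$ (centrality of $e$), and it is also the Jacobi identity for your bracket relations. It follows from pair symmetry, $\la R(e_i,w_0)e_j,w_0\ra=\la R(e_j,w_0)e_i,w_0\ra$, which gives $\la Se_i,e_j\ra=\la Se_j,e_i\ra$; this one line should be included. Second, in the nondegeneracy step your parenthesis attaches ``contradicting indecomposability'' also to the case where the central vector $e\in E$ is null. That case does not contradict indecomposability (the paper's own rank-$2$ examples are indecomposable with two independent parallel null fields); it contradicts the rank-$1$ hypothesis, and the dichotomy should be stated: $e$ non-null gives a nondegenerate parallel line, hence a de Rham--Wu splitting, contradicting indecomposability; $e$ null gives a second parallel lightlike field, contradicting rank $1$. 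This is exactly how the paper argues ($W^-$ cannot be lightlike since $V$ is unique up to scale). Note that in the null case both you and the paper read ``rank $1$'' as uniqueness of the parallel lightlike vector field (as in the abstract), which is slightly stronger than the literal definition via uniqueness of pp-wave structures, since a second central null vector need not have flat orthogonal leaves; as this reading is shared with the paper's own proof, it is not a defect of your proposal, but it is worth making the dependence explicit.
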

\begin{proof}
    Recall that since $\bX$ is a symmetric space we can extend (uniquely) the metric on $\bX$ to an $\ad$-invariant metric $\langle \cdot , \cdot \rangle$ and an involution $\Theta$  on the Lie algebra of its transvection group. Moreover, $\langle \cdot , \cdot \rangle$ is $\Theta$-invariant. Hence, we have the decomposition $\widehat{\g}_\bX=\widehat{\g}_\bX^+\oplus\widehat{\g}_\bX^-$ with respect to the $\pm 1$ eigenvalues of $\Theta$.  The isotropy algebra $\h$ is given by $\widehat{\g}_\bX^+$ and $\widehat{\g}_\X^-$ is identified with the tangent space of $\bX$ at the base point. 

Now,  the vector field $V$ at the base point is an element of $\widehat{\g}_\X^-$. Hence, we can decompose $\widehat{\g}_\bX=\R L\oplus V^\perp$, where $L$ is a lightlike vector in $\widehat{\g}_\X^-$. Moreover, we decompose $V^\perp$ as $V^\perp=V\oplus \a$,  where we choose $\a=\Span\{L,V\}^\perp$ (the plane $\Span\{L,V\}$ is Lorentz, $\la L,V\ra=1$). In particular, from the $\Theta$-invariance of $\langle \cdot, \cdot \rangle$ we see that $\a$ is $\Theta$-invariant. We denote $\a^+$ and $\a^-$ the eigenspaces of $\Theta$ restricted to $\a$ with eigenvalues $\pm 1$.  

Observe that $\a$ is $L$-invariant as from the $\ad$-invariance we get $\la[L,\a],L\ra=0$. We claim that $[\a,\a]\subset \R V$. More precisely, for $A_1,A_2\in \a$ we have $[A_1,A_2]=\la L(A_1), A_2\ra V$ where $L(A_1):=[L,A_1]$. Moreover, the $2$-form $\omega:=\la L\cdot, \cdot\ra$ on $\a$ is non-degenerate.\\

\textbf{The derived subalgebra $[V^\perp,V^\perp]$.}  Recall that we have the following fundamental property for the transvection group $[\widehat{\g}_\bX^-,\widehat{\g}_\bX^-]=\widehat{\g}_\bX^+$. It follows in particular, that $[\R L\oplus\a^-\oplus \R V, \R L\oplus\a^-\oplus \R V]=\a^+$. Since $V$ is parallel, it is central. So we get, \begin{equation}\label{Eq: transvection1}
    [\R L\oplus\a^-,\a^-]=\a^+.
\end{equation}
Now we show that $[\a^-,\a^-]=0$. Indeed, $\la[\a^-, \a^-],\a^+\ra=\la[\a^-, \a^-],[\R L\oplus\a^-,\a^-]\ra$. Using the $\ad$-invariance of the metric and the fact that leaves are flat (i.e. $[[\a^-,\a^-],\a^-]=0$), we get that $\la[\a^-, \a^-],\a^+\ra=0$. However, $[\a^-,\a^-]\subset \a^+$, and the metric restricted on $\a^+$ is not degenerate ($\a^+$ is transversal to $V$), we get that $[\a^-,\a^-]=0$. Plugging this in Equation (\ref{Eq: transvection1}) we get that \begin{equation}\label{Eq: transvection2}
    [L,\a^-]=\a^+.
\end{equation}
Using that $[\a^-,\a^-]=0$, we get $\la[\a^-,\a^+], \a^-\ra=0$. Since, $\la[\a^-,\a^+], V \ra=0$, it follows that $[\a^-,\a^+]\subset \a^-\oplus \R V$.  Thus, $[\a^-,\a^+]\subset \R V$ (since the metric on $\a^-$ is non degenerate).\\
Finally we show that $[\a^+, \a^+]=0$. For this we consider, $\la[\a^+,\a^+], \a^+\ra$, however, by Equation (\ref{Eq: transvection2}) we can write  
\begin{equation*}\label{EQ: omega}
    \la[\a^+,\a^+], \a^+\ra=\la[\a^+,\a^+], [L, \a^-]\ra= \la [\a^-,[\a^+,\a^+]], L\ra=0.
\end{equation*}
The last equality above follows from the fact $[\a^+, \a^+]\subset \a^-$ together with $[\a^-,\a^+]=0$. Thus it follows that $[\a^+, \a^+]=0$. So, $[\a,\a]\subset \R V$, which implies that $[V^\perp,V^\perp]\subset \R V$.\\

\textbf{Heisenberg structure.} Here we show that the $2$-form $\omega=\la L\cdot, \cdot\ra$ is non-degenerate and for $A_1,A_2\in \a$ we have $[A_1,A_2]=\omega(A_1,A_2)V$. The second part of the statement  follows from $[A_1,A_2]\in \R V$ and $\omega(A_1,A_2)=\la L, [A_1,A_2]\ra=\la [L,A_1],A_2\ra$. Now, assume that $\omega$ were degenerate, so there exists a non-trivial element $W\in\a$ such that $\omega(W,\cdot)\equiv 0$.

First, we observe that  $L(W)^\perp=\widehat{\g}_\bX$, hence $L(W)=0$. We write $W=W^++W^-$ where $W^\pm\in \a^\pm$. Expressing $L(W)=L(W^+)+L(W^-)=0$, since $L(W^\pm)\in \a^\mp$ we get that $L(W^\pm)=0$. We claim that $W^\pm=0$. In fact, if $W^-\neq0$, then it defines a parallel vector field because it is  invariant by the isotropy. Namely, we have \begin{equation*}
    [\a^+, W^-]=\la L(W^-),\a^+\ra V=0.
\end{equation*} The parallel vector field $W^-$ cannot be lightlike, since by assumption $V$ is unique (up to scale). However, if $W^-$ is not lightlike it means that $\bX$ is reducible, which is a contradiction. Using Equation (\ref{Eq: transvection2}), so we get $\la W^+, A_2\ra=\la W^+, [L, A_1]\ra=-\la[L, W^+], A_1\ra=0$, for $A_2\in \a^+, A_1\in \a^-$. Thus, $W^+ \perp \a^+$, this means that $W^+=0$.
\end{proof}

\bibliographystyle{plain}
\bibliography{Bibliography}

\end{document}